\numberwithin{equation}{section}
\newtheorem{theorem}{Theorem}[section]
\newtheorem{proposition}[theorem]{Proposition}
\newtheorem{lemma}[theorem]{Lemma}
\theoremstyle{definition}
\newtheorem{example}[theorem]{Example}
\newtheorem{definition}[theorem]{Definition}
\let\oldmarginpar\marginpar
\renewcommand\marginpar[1]{\-\oldmarginpar[\raggedleft\small\sf
#1]{\raggedright\small\sf #1}}
\newcommand{\ZZ}{\mathbb{Z}}
\newcommand{\CC}{\mathbb{C}}
\newcommand{\cA}{\mathcal{A}}
\newcommand{\cT}{\mathcal{T}}
\newcommand{\D}{\mathrm{Dom}}
\newcommand{\Hom}{\mathrm{Hom}}
\def \v{\mathbf{v}}
\newcommand{\myE}{\mathbf{E}}
\newcommand{\mytildeF}{\tilde{\mathcal{F}}}
\newcommand{\im}{\mathrm{im}}
\newcommand{\rank}{\mathrm{rank}}
\newcommand*{\doublerightarrow}[2]{\mathrel{
  \settowidth{\@tempdima}{$\scriptstyle#1$}
  \settowidth{\@tempdimb}{$\scriptstyle#2$}
  \ifdim\@tempdimb>\@tempdima \@tempdima=\@tempdimb\fi
  \mathop{\vcenter{
    \offinterlineskip\ialign{\hbox to\dimexpr\@tempdima+2em{##}\cr
    \rightarrowfill\cr\noalign{\kern.2ex}
    \rightarrowfill\cr}}}\limits^{\!#1}_{\!#2}}}
\newcommand*{\doubleleftarrow}[2]{\mathrel{
  \settowidth{\@tempdima}{$\scriptstyle#1$}
  \settowidth{\@tempdimb}{$\scriptstyle#2$}
  \ifdim\@tempdimb>\@tempdima \@tempdima=\@tempdimb\fi
  \mathop{\vcenter{
    \offinterlineskip\ialign{\hbox to\dimexpr\@tempdima+2em{##}\cr
    \leftarrowfill\cr\noalign{\kern.2ex}
    \leftarrowfill\cr}}}\limits^{\!#1}_{\!#2}}}
\newcommand*{\triplerightarrow}[1]{\mathrel{
  \settowidth{\@tempdima}{$\scriptstyle#1$}
  \mathop{\vcenter{
    \offinterlineskip\ialign{\hbox to\dimexpr\@tempdima+2em{##}\cr
    \rightarrowfill\cr\noalign{\kern.2ex}
    \rightarrowfill\cr\noalign{\kern.2ex}
    \rightarrowfill\cr}}}\limits^{\!#1}}}
\begin{document}

\title[Nakajima's quiver varieties and triangular bases]
{Nakajima's quiver varieties and triangular bases of bipartite cluster algebras}
\author{Li Li}
\address{Department of Mathematics
and Statistics,
Oakland University, 
Rochester, MI 48309-4479, USA 
}
\email{li2345@oakland.edu}

\subjclass[2010]{Primary 13F60; Secondary 14F06, 16G20, 32S60}

\begin{abstract}
Berenstein and Zelevinsky introduced quantum cluster algebras \cite{BZ1} and the triangular bases \cite{BZ2}. The support conjecture proposed in \cite{LLRZ}, which asserts that the support of each triangular basis element for a rank-2 cluster algebra is bounded by an explicitly described region, was established in \cite{L} for skew-symmetric rank-2 cluster algebras. In this paper we extend this result  by proving a bound on the support of each triangular basis element for bipartite cluster algebras. 
\end{abstract}

\maketitle
\tableofcontents

\section{Introduction}
Cluster algebras and quantum cluster algebras were introduced by Fomin-Zelevinsky \cite{fz-ClusterI} and Berenstein-Zelevinsky \cite{BZ1}, respectively. A primary objective behind the inception of  quantum cluster algebras is to understand good bases arising from the representation theory of certain non-associative algebras. Mimicking the construction of the dual canonical basis from a PBW basis \cite{Leclerc}, Berenstein and Zelevinsky \cite{BZ2} constructed the triangular bases for acyclic quantum cluster algebras. These triangular bases exhibit many desirable properties. It particular, it is worth mentioning the work by Qin \cite{Qin, Qin1, Qin2}, where he has constructed triangular bases for injective-reachable cluster algebras using a different approach, and showed that his triangular bases  coincide with the bases by Berenstein and Zelevinsky for the seeds associated with acyclic quivers.

In  \cite{LLRZ}, Lee, Rupel, Zelevinsky and the author proposed a  conjecture of a precisely described region to constrain the support of each triangular basis element of any rank-2 quantum cluster algebra.
The proof for the skew-symmetric rank-2 case was established in \cite{L} using Nakajima's quiver varieties. In this work, we prove a similar result for any bipartite cluster algebra. 

The definition of quantum cluster algebra and (Berenstein-Zelevinsky's) triangular basis will be reviewed in \S \ref{section:review qCA}. Our focus here is on bipartite quantum cluster algebras, that is, those come from bipartite quiver. Additionally, for the sake of clarity and simplicity, we confine our study to quantum cluster algebras with principal quantization.

Consider a rank-$n$ bipartite cluster algebra with principal quantization determined by an $n\times n$ skew-symmetric matrix $B=[b_{ij}]$. Let the initial seed be $(\Lambda,\tilde{B},\tilde{X})$, where $$\tilde{B}=\begin{bmatrix}B\\ {\bf I}_n\end{bmatrix},\quad
\Lambda=\begin{bmatrix}0&-{\bf I}_n\\ {\bf I}_n&-B\end{bmatrix},\quad
\tilde{X}=(X_1,\dots,X_{2n})$$ 
where ${\bf I}_n$ the $n\times n$ identity matrix. In this setup, 
the first $n$ variables of $\tilde{X}$ are called cluster variables, while the last $n$ variables are called frozen variables. 

For $c\in\mathbb{R}$, we denote $[c]_+=\max(c,0)$;  the vector $b_k$ is the $k$-th column  of $\tilde{B}$; for a vector $b$, we use $[b]_+$ to denote the vector obtained from $b$ by applying $[\; ]_+$ to each coordinate. 

The  triangular basis $\{C_a\}_{a\in \mathbb{Z}^{2n}}$ is of the following form: 
\begin{equation}\label{eq:C=e}
C_a=\sum_v e_v X^{a+\tilde{B}v}, \quad e_v\in\mathbb{Z}[\v^\pm].
\end{equation}
We define $\deg(e_v)$ to be largest exponent of $\v$ in $e_v$. 
Define a quiver $\mathcal{Q}^{\rm op}=(I,\Omega)$ with vertex set $I=\{1,\dots,n\}=I_0\sqcup I_1$ (where vertices in $I_0$ are sinks, vertices in $I_1$ are sources) and arrow set $\Omega$ determined by $B$ such that there are $b_{ij}$ arrows $h:j\to i$ whenever $b_{ij}>0$. We denote the source and target of $h$ by $s(h)$ and $t(h)$, respectively.  
See \S2 for details.

\begin{theorem}\label{main theorem}
Let $a=(a_1,\dots,a_{2n})\in\mathbb{Z}^{2n}$. Each $e_v$ in $C_a$ as in \eqref{eq:C=e} is symmetric and unimodal, and
 $$\deg(e_v)  \le  f(v) := -\sum_{i=1}^n (a_i+v_i)v_i+\sum_{h\in\Omega}v_{s(h)}v_{t(h)}.$$
In particular,  $e_v\neq0$ only if $f(v)\ge0$.
\end{theorem}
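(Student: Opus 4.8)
The proof strategy should exploit the geometric interpretation of triangular basis coefficients via Nakajima's quiver varieties, paralleling the rank-2 case in \cite{L} but now for the bipartite quiver $\mathcal{Q}^{\rm op}$. Let me think about what's really going on here.

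The triangular basis elements, in the bipartite/acyclic case, are known to be closely related to (dual) canonical basis elements, which in turn relate to intersection cohomology of Nakajima's graded quiver varieties. The coefficients $e_v$ should be Poincaré polynomials (in the quantum parameter $\v$, suitably normalized) of certain cohomology groups—specifically, stalks of IC sheaves on Nakajima varieties, or fibers of the morphism from the non-singular to the affine quiver variety. The key facts to marshal are: (1) such Poincaré polynomials are palindromic (symmetry) by hard Lefschetz / Poincaré duality on the relevant varieties or by properties of IC sheaves; (2) they are unimodal, again typically via hard Lefschetz applied to an ample class or a Lefschetz operator coming from a line bundle; (3) the degree bound $f(v)$ is the dimension (or half-dimension) of the relevant fiber or variety, which one computes combinatorially.

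**The plan.**

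First I would recall the precise identification: the index $v \in \mathbb{Z}^{\ge 0}$ ranges over dimension vectors, and $e_v$ is identified with the Poincaré polynomial of the fiber $\pi^{-1}(\text{generic point})$ or the stalk cohomology $\mathcal{H}^\bullet(\text{IC})_x$ of the appropriate Nakajima graded quiver variety $\mathfrak{M}_0(v, w)$ where $w$ is determined by $a$ (the "framing" encoding the $g$-vector / $a$-vector). This identification for acyclic quantum cluster algebras is where the input from Berenstein–Zelevinsky and from the quiver-variety description of the dual canonical basis (Nakajima, Qin, Kimura–Qin) enters. Second, I would establish symmetry and unimodality: symmetry follows from Poincaré duality on the smooth variety $\mathfrak{M}(v,w)$ together with the semismallness (or at least properness and the decomposition theorem) of $\pi\colon \mathfrak{M}(v,w) \to \mathfrak{M}_0(v,w)$; unimodality follows from the hard Lefschetz theorem applied to the class of the tautological line bundle $\mathcal{L}$ on $\mathfrak{M}_0^{\le}(v,w)$, which is known to be relatively ample for $\pi$ (this is Nakajima's construction of the $\mathfrak{sl}_2$-action, or the "$\mathcal{L}$-operator"). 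Third, for the degree bound, I would compute $\dim \mathfrak{M}(v,w)$ (or the relevant fiber dimension, i.e. the defect of semismallness), express it in terms of the Euler form of $\mathcal{Q}^{\rm op}$, and match it term-by-term with $f(v) = -\sum_i (a_i + v_i) v_i + \sum_{h \in \Omega} v_{s(h)} v_{t(h)}$; note $\sum_h v_{s(h)} v_{t(h)} = \sum_{i,j} b_{ij}^+ v_i v_j$ is exactly the "oriented" part of the Euler form, and $-\sum_i (a_i + v_i) v_i$ packages the framing contribution $\langle v, w\rangle$ minus $\dim$ of the group, using $w_i = -a_i - $ (something depending on $a_{n+i}$ and $B$). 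The nonvanishing statement $f(v) \ge 0$ is then just the statement that a nonempty variety has nonnegative dimension, i.e. $\mathfrak{M}_0(v,w) \ne \varnothing$ (equivalently the relevant weight space is nonzero) forces the Poincaré polynomial to be supported in nonnegative degrees with top degree $\le \dim$.

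**The main obstacle.**

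The hard part will be the bookkeeping that translates the cluster-algebraic data $(a, v)$, with its mix of cluster and frozen coordinates and the specific choices of $\tilde B$ and $\Lambda$, into the quiver-variety data $(v, w)$ in a way that makes $f(v)$ come out exactly as the dimension formula — including getting all signs and the principal-quantization normalization right, and confirming that the exponent of $\v$ (rather than $q = \v^2$ or $\v^{1/2}$) is the one that matches the cohomological degree with the stated bound. A second genuine difficulty is justifying unimodality in full generality: for rank 2 one can use the explicit $\mathfrak{sl}_2$ or Lefschetz action, but in the bipartite rank-$n$ case one must ensure the relevant Lefschetz/ampleness input (relative ampleness of the tautological bundle along $\pi$) applies to the precise varieties occurring here; I would isolate this as a lemma, citing Nakajima's results on graded quiver varieties and their Lefschetz structure, and checking that the bipartite orientation poses no extra wrinkle (indeed bipartiteness is what keeps the construction in the "level" / graded setting where Nakajima's machinery is cleanest). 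I expect the symmetry and the nonvanishing corollary to be comparatively routine once the dictionary is fixed.
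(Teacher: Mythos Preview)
Your approach is essentially the paper's: identify $e_v$ geometrically via Nakajima's graded quiver varieties, then extract symmetry, unimodality, and the degree bound from the BBDG decomposition and (relative) hard Lefschetz. The paper packages steps 2--4 into a single cited lemma (Lemma~\ref{BBDG property} from \cite{L}) applied to $\pi:\tilde{\mathcal F}_{v,w}\to\mathbf E_{v,w}$, and the identification in step~1 is precisely Lemma~\ref{chiL(w)=C} together with \eqref{chiLw}.

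One point worth sharpening: $e_v$ is not the Poincar\'e polynomial of a fiber nor a stalk of an IC sheaf, but the multiplicity polynomial $a_{v,0;w}(\mathbf v)$ of the skyscraper summand $IC_{\{0\}}$ in the BBDG decomposition of $\pi_*IC_{\tilde{\mathcal F}_{v,w}}$. This matters for the degree bound: the correct inequality is $\deg e_v\le 2d_0-d$ with $d_0=\dim\pi^{-1}(0)=\dim\mathcal F_{v,w}$ and $d=\dim\tilde{\mathcal F}_{v,w}$, not simply $\dim\mathfrak M(v,w)$. The paper then checks directly from Proposition~\ref{prop:fiber}(b) that $2d_{v,w}-\tilde d_{v,w}=f(v)$. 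Your instinct that the bookkeeping (the dictionary $a\leftrightarrow w$ via $w_i=[-a_i]_+$, $w'_i=[a_i]_+$, and the matching of $f(v)$ with the dimension formula) is the main labor is exactly right, and Sections~3--5 of the paper are devoted to it.
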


Together with \eqref{nonempty F}, the support 
$${\rm Supp}(C_a):=\{v\in\mathbb{Z}_{\ge0}^n \; | \; e_v\neq0\}$$
satisfies the following condition:
$$f(v)\ge0, \quad
0\le v_\alpha\le [-a_\alpha]_+ \; (\forall \alpha\in I_0),\; \textrm{ and }\; 0\le v_\beta\le [a_\beta]_+ +\sum_{h:\ s(h)=\beta} v_{t(h)} \; (\forall \beta\in I_1).
$$
Note that not every integer point satisfying the above condition are in ${\rm Supp}(C_a)$. In an ongoing project, the author is looking for a precise bound of ${\rm Supp}(C_a)$ to full generalize the results in  \cite{L}.  

The proof of the theorem follows the idea presented in \cite{L}. While a significant portion of the methodology used in \cite{L} can be extended to the bipartite case, many adjustments are necessary,  and the proofs and computations become notably more intricate. Certain results are already established in the work of Nakajima \cite{Nakajima} and Qin \cite{Qin};  however, our approach endeavors to provide a self-contained treatment, with minimal reliance on representation theory and predominantly using elementary algebraic geometry, in alignment with the approach adopted in \cite{L}. 

\smallskip

The paper is organized as follows. 
In \S2 we revisit the definition and properties of quantum cluster algebra and triangular basis.
In \S3 we review some facts on Nakajima's quiver varieties.
In \S4 we prove an algebraic version of the transversal slice theorem, and show that the local systems appeared in the BBDG decomposition theorem are trivial. The BBDG Decomposition theorem \cite{BBD} serves as the primary advanced tool used in this paper. It asserts that, for a proper map of algebraic varieties $f:X\to Y$, the pushforward of the intersection cohomology complex $IC_X$ can be decomposed into a direct sum of $IC_V(L)[d]$ for a collection of irreducible subvarieties $V\subseteq Y$, local systems $L$ on (an open dense subset of) $V$, and integers $d$. While the local systems are often trivial, as is the case in our scenario, the proof is not always straightforward. Here, we present a proof using the Algebraic Transversal Slice Theorem. 
In \S5 we identify the dual canonical basis and triangular basis in our specific case.
In \S6 we prove the main theorem and provide some examples. 

\smallskip

{\bf Acknowledgments.} The author thanks the organizers, Yiqiang Li and Changlong Zhong, of the AMS Special Session held at the University of Buffalo in September 2023, during which numerous inspiring conversations took place.  
Computer calculations were performed using SageMath \cite{Sage}.

\section{Quantum cluster algebras and triangular bases}\label{section:review qCA}

In this section we recall the definition and facts of quantum cluster algebras with principal quantization. See \cite{BZ1,BZ2} for details. 

\subsection{Quantum cluster algebras}
Let $n$ be a positive integer and $m=2n$. Let $B$ be a skew-symmetrizable integer $n\times n$ matrix, that is, there exists a diagonal matrix $D$ such that $DB$ is a skew-symmetric matrix. 
Denote the $n\times n$ identity matrix by ${\bf I}_n$, or simply by ${\bf I}$ if $n$ is clear from the context.
Let $\tilde{B}:=\begin{bmatrix}B\\ {\bf I}_n\end{bmatrix}$ where ${\bf I}_n$ the $n\times n$ identity matrix. 
Let $\Lambda:=\begin{bmatrix}0&-D\\ D&-DB\end{bmatrix}$, viewed as a bilinear form by defining $\Lambda({\bf u}, {\bf u}')={\bf u}^T\Lambda{\bf u}'$ for column vectors ${\bf u}, {\bf u}'\in\mathbb{Z}^{m}$. 
Let the quantum torus $\cT$ be the $\ZZ[\v^{\pm 1}]$-algebra with basis $\{X^{\bf e} : {\bf e}\in\mathbb{Z}^{m}\}$, and the multiplication given by $X^{\bf e}X^{{\bf e}'}=\v^{\Lambda({\bf e},{\bf e}')}X^{{\bf e}+{\bf e}'}$ for ${\bf e},{\bf e}'\in\mathbb{Z}^{m}$. 
Let $\mathcal{F}$ be the skew-field of fractions of $\cT$.
Let $e_1,\dots,e_m\in\mathbb{Z}^m$ be the standard basis, that is, $e_i$ has 1 in the $i$-th coordinate and 0 elsewhere. 
Let $\tilde{X}=\{X_1,\dots,X_{m}\}$ where $X_i=X^{e_i}$  are called (initial) cluster variables if $1\le i\le n$, or frozen variables if $n+1\le i\le m$.
The \emph{bar-involution} is the $\mathbb{Z}$-linear anti-automorphism of $\cT$ determined by 
$\overline{\v^iX^{\bf e}}=\v^{-i}X^{\bf e}$ for ${\bf e}\in\mathbb{Z}^{m}$. Note that $\overline{XY}=\overline{Y}\; \overline{X}$ for $X,Y\in \cT$. 
An element in $\cT$ which is invariant under the bar-involution is said to be \emph{bar-invariant}.

Fix an $n$-regular tree $\mathbb{T}_n$ with root $t_0$ and label each edge by a number in $\{1,\dots,n\}$ such that any two edges sharing an endpoint are labelled differently. Attach to $t_0$ the tuple $(\Lambda,\tilde{B},\tilde{X})$ as above, called the initial seed. Given an edge  joining $t$ and $t'$ with label $k$, we denote $t'=\mu_k(t)$ (and $t=\mu_k(t')$); if the seed $(\Lambda,\tilde{B},\tilde{X})$ at $t$ is constructed, we construct a seed $(\Lambda',\tilde{B}',\tilde{X}')$ attached to $t'$ by mutation, where $\tilde{B}'=[b'_{ij}]$ is obtained from $\tilde{B}=[b_{ij}]$ by
$$b'_{ij}=
\begin{cases}
&-b_{ij}, \textrm{ if $i=k$ or $j=k$}; \\
&b_{ij} + [b_{ik}]_+[b_{kj}]_+ - [-b_{ik}]_+[-b_{kj}]_+, \textrm{ otherwise.}\\
\end{cases}
$$
and $\tilde{X}'$ is obtained from $\tilde{X}$ by replacing $X_k$ with $X'_k=X^{-e_k+[b_k]_+} + X^{-e_k+[-b_k]_+}$. 
The bilinear form $\Lambda'$ is determined by 
$$
\begin{cases}
&\Lambda'(e_i,e_j)=\Lambda(e_i,e_j), \textrm{ for $i,j\neq k$;}\\
&\Lambda'(e_k,e_j)=\Lambda(-e_k+[b_k]_+,e_j), \textrm{ for $j\neq k$.}\\ 
\end{cases}
$$
Denote $\mathbb{Z}P[\v\pm]:=\mathbb{Z}[\v^\pm][X_{n+1}^\pm,\dots,X_{m}^\pm]$.  
The \emph{quantum cluster algebra} $\cA$  is the $\mathbb{Z}P[\v^{\pm1}]$-subalgebra of $\mathcal{F}$ generated by all cluster variables. 
%

\subsection{Triangular bases} 
Assume that the seed $(\Lambda,\tilde{B},\tilde{X})$ is acyclic, that is, there exists a linear order $\lhd$ on $\{1,\dots,n\}$ such that $b_{ij}\le 0$ if $i\lhd j$. Define $r(a)=\sum_{k=1}^n[-a_k]_+$ for $a\in\mathbb{Z}^{m}$ and define a partial order $\prec$ on $\mathbb{Z}^{m}$ by $a'\prec a\Leftrightarrow r(a')<r(a)$. 

The construction of the triangular basis starts with the standard monomial basis $\{E_a\,:\, a\in\ZZ^{m}\}$.
Let $X'_k=\mu_k(X_k)$.
For every $a=(a_1,\dots,a_{m})$, the \emph{standard monomial} $E_a$ is defined as 
 \begin{equation}\label{eq:Ma}
  E_{a} = \v^{v(a)}
  X^{\sum_{i=n+1}^{m}a_ie_i+\sum_{j=1}^n[a_j]_+ e_j} \prod_{1\le k\le n}^\lhd (X_k')^{[-a_k]_+}\ .
 \end{equation}
 where $\prod_{1\le k\le n}^\lhd$ means taking the product in increasing order with respect to $\lhd$,
 and $v({a})\in\mathbb{Z}$ is determined by the condition that the leading term of $E_a$,  which is the term obtained by 
 replacing $X_k'$ by $X^{-e_k-[b_k]_+}$ in \eqref{eq:Ma},  
 is bar-invariant. (In \cite{BZ2}, $X^{-e_k+[b_k]_+}$ is used, but as they remarked, it can be replaced by $X^{-e_k-[b_k]_+}$ and the value of $v(a)$ remains the same. We use the latter, because then $a$ gives the $g$-vector of $E_a$. This is also the choice in \cite{Qin}.)
  
 It is known 
 that $\{E_a\}$ is a $\ZZ[\v^{\pm 1}]$-basis of the cluster algebra  $\cA$.
However, these $E_a$ are not bar-invariant in general and do not contain all the cluster monomials, and they are inherently dependent on the choice of an initial cluster. These drawbacks provide a motivation to consider the triangular basis
(which is introduced in \cite{BZ2} and recalled below) constructed from the standard monomial basis with a
built-in bar-invariance property.

\begin{definition}\cite{BZ2}
The triangular basis $\{C_a\,:\,{a}\in\ZZ^{2n}\}$ is the unique collection of elements in $\cA(\tilde{X},\tilde{B})$ satisfying:

 \begin{enumerate}
  \item[] (P1) Each $C_a$ is bar-invariant; 
  \item[] (P2) For each $a$,
  $\displaystyle  C_a - E_a \in \bigoplus_{a'} \mathbf{v}\ZZ[\mathbf{v}]E_{a'}.$
  \end{enumerate}
Moreover, those $a'$ satisfy $a'\prec a$.
\end{definition}
It is shown in \cite{BZ2} that
the triangular basis exists uniquely, does not depend on the choice of an initial (acyclic) seed, and contains all cluster monomials associated to acyclic seeds.

\medskip

\noindent{\bf Note: In the rest of the paper we assume $B$ is skew-symmetric and take $D=I_n$.}

\section{Nakajima's graded quiver varieties}

In this section we recall Nakajima's graded quiver varieties. 
The references for this section are \cite{Nakajima, Qin}. There is some inconsistency in these references. In this paper, 
We will start with a quiver $\mathcal{Q}^{op}$ (pardon the notation), since it is used essentially in the paper, for Nakajima's graded quiver varieties. Its opposite quiver $\mathcal{Q}$ is to match the exchange matrix $B$ for quantum cluster algebras in the usual convention. 

Let $\mathcal{Q}^{\rm op}$ be a bipartite quiver with vertex set $I=\{1,\dots,n\}=I_0\sqcup I_1$ such that vertices in $I_0$ (resp.~$I_1$) are sinks (resp.~sources), and let $\Omega=\{h_1,\dots,h_r\}$ be the set of arrows in $\mathcal{Q}^{\rm op}$, and denote $\alpha_j := t(h_j)$, $\beta_j := s(h_j)$, that is, $h_j$ is an arrow $\beta_j\to \alpha_j$. 
Let $\widetilde{\mathcal{Q}^{\rm op}}$ be the decorated quiver obtained from $\mathcal{Q}^{\rm op}$ by adding a new vertex $i'$ and an arrow $i'\to i$ if $i\in I_0$ (resp.~an arrow $i\to i'$ if $i\in I_1$).

Following the notation in \cite[\S5]{Nakajima}, denote dimension vectors $w=(w_i,w'_i)_{i\in I}$, $v=(v_i)_{i\in I}$, where all the components are nonnegative integers.

Fix $\mathbb{C}$-vector spaces $W_i,W'_i, V_i$ with dimensions 
$w_i,w'_i, v_i$ for $i\in I$, respectively. Define
$$W := \bigoplus_{i\in I}(W_i\oplus W_i'), \quad V := \bigoplus_{i\in I} V_i.$$
A quiver representation of $\widetilde{\mathcal{Q}^{\rm op}}$ consists linear maps $x_i: W'_i\to W_i$ ($i\in I_0$), $x_i: W_i\to W_i'$ ($i\in I_1$),  and $y_h:W_{s(h)}\to W_{t(h)}$ ($h\in \Omega$). Below is an example with $I_0=\{3\}$ and $I_1=\{1,2\}$: 
\begin{equation}\label{WWWW}
\xymatrix{
W_1\ar[d]_-{x_1} \ar@<-.5ex>[rd]^(.35){y_1} \ar@<.5ex>[rd]_(.35){\phantom{X} y_2} 
 & W'_3\ar[d]^-(.3){x_3} &W_2\ar[d]^{x_2} \ar@<-.5ex>[ld]^-(.35){y_4} \ar@<.5ex>[ld]_-(.3){\phantom{XX} y_3}\\
W_1' & W_3 &W_2'\\
}
\end{equation}
\begin{definition}\label{def:Ai}
For each sink $\alpha\in I_0$, let $r_1<\cdots<r_s$ be the indices such that $h_{r_1},\dots,h_{r_s}$ are all the arrows with target $\alpha$, and define a linear map
$$
\aligned
A_\alpha := x_\alpha+\sum_{h:\ t(h)=\alpha} y_h: &\; W_\alpha' \oplus \bigoplus_{h:\ t(h)=\alpha} W_{s(h)}\to W_\alpha, \\
&(a,b_1,\dots,b_s)\mapsto x_\alpha(a)+y_{r_1}(b_1)+\cdots+y_{r_s}(b_r)\\
\endaligned
$$
For each source $\beta\in I_1$, let $r'_1<\cdots<r'_t$ be the indices such that $h_{r'_1},\dots,h_{r'_t}$ are all the arrows with source $\beta$, and define a linear map
$$
\aligned
A_\beta := x_\beta \oplus \bigoplus_{h:\  s(h)=\beta} y_h: &\; W_\beta\to W_\beta'\oplus \bigoplus_{h:\ s(h)=\beta} W_{t(h)},\\ & b\mapsto (x_\beta(b), y_{r'_1}(b),\dots,y_{r'_t}(b))
\endaligned
$$
\end{definition}
Note that  $s,r_1,\dots,r_s$ depend on $\alpha$ so, strictly speaking, we should write them as $s(\alpha)$, $r_1(\alpha)$, etc., but we omit  $\alpha$ for simplicity of notation; similarly, $t,r'_1,\dots,r'_t$ depend on $\beta$. 

Alternatively, we can express these maps by matrices. Fix bases for $W_i$ and $W'_i$ , and by abuse of notation we use  $x_i$ (resp.~$y_h$) to denote the matrix representing the corresponding linear map. 
Then the above $A_\alpha$ and $A_\beta$ are represented by the following block matrices: 
\begin{equation}\label{AB}
\aligned
&
A_\alpha =
[x_\alpha, y_{r_1},\dots,y_{r_s}]_\text{hor}
:=[ x_\alpha | y_{r_1} | y_{r_2} |\hdots |y_{r_s} ], \\
&A_\beta=[x_\beta,y_{r'_1},y_{r'_2} \dots,y_{(r'_t)} ]_\text{vert}
:=
\begin{bmatrix} x_\beta \\ y_{r'_1}\\ y_{r'_2}\\ \vdots\\ y_{r'_t}
\end{bmatrix}.
\\
\endaligned
\end{equation}

Below is the list of sizes of various matrices, where $\alpha\in I_0$, $\beta=I_1$:

\begin{center}
 \begin{tabular}{|c|c|c|c|c|} 
 \hline
 $x_\alpha$ & $x_\beta$ & $t_h$ & $A_\alpha$ & $A_\beta$\\
 \hline
 $w_\alpha\times w'_\alpha$ & $w_\beta'\times w_\beta$ & $w_{t(h)}\times w_{s(h)}$ & $\displaystyle w_\alpha\times (w_\alpha'+\sum_{h:\;  t(h)=\alpha } w_{s(h)})$ & 
$\displaystyle (w'_\beta+\sum_{h:\;  s(h)=\beta } w_{t(h)}) \times w_\beta$\\
 \hline
\end{tabular}
\end{center}

\smallskip

For a vector space $E$ of dimension $e$, denote by $Gr(d,E)$ or $Gr(d,e)$ the Grassmannian space parametrizing all $d$-dimensional linear subspaces of $E$.
Let $S_{Gr(d,e)}$ be the tautological subbundle (i.e., the universal subbundle) on the Grassmannian variety $Gr(d,e)$.

\begin{definition}\cite[\S4]{Nakajima} 
Let dimension vectors $w=(w_i,w'_i)_{i\in I}$, $v=(v_i)_{i\in I}$, where all the components are nonnegative integers.

(a) The variety $\mathbf{E}_w$ is defined as the space of quiver representations of the quiver $\widetilde{\mathcal{Q}^{\rm op}}$:
$$\aligned
\mathbf{E}_w
& := \bigoplus_{\alpha\in I_0}\Hom(W'_\alpha, W_\alpha) \oplus
\bigoplus_{\beta\in I_1} \Hom(W_\beta, W'_\beta) \oplus \bigoplus_{h\in\Omega} \Hom(W_{s(h)},W_{t(h)}) \\
&\cong \CC^{\sum_{i\in I}w_iw_i'+\sum_{h\in\Omega} w_{s(h)}w_{t(h)} }.
\endaligned
$$ 
We denote elements of $\mathbf{E}_w$ as
$(x_i,y_h)_{i\in I,\,  h\in\Omega}$.

(b) The projective variety $\mathcal{F}_{v,w}$ is a closed subvariety of $$\prod_{\alpha\in I_0} Gr(v_\alpha,W_\alpha)\times \prod_{\beta\in I_1} Gr\Big(v_\beta,W'_\beta \oplus \bigoplus_{h:\ s(h)=\beta} W_{t(h)}\Big)$$ 
defined as
$$\aligned
\mathcal{F}_{v,w} := \big\{(X_i)_{i\in I}\; |\; 
& \dim X_i=v_i\, (\forall i\in I), \;
X_\alpha\subseteq W_\alpha (\forall \alpha\in I_0),\\ 
& X_\beta\subseteq W_\beta'\oplus \bigoplus_{h:\ s(h)=\beta}  X_{t(h)} (\forall \beta\in I_1)\big\}.
\endaligned
$$

(c) \emph{Nakajima's nonsingular graded quiver variety} $\tilde{\mathcal{F}}_{v,w}$ is given by
$$\aligned
\tilde{\mathcal{F}}_{v,w}
 := \big \{(x_i,y_h,X_i)_{i\in I, h\in\Omega}\;|\; 
& 
(x_i,y_h)_{i\in I, h\in\Omega} \in \mathbf{E}_w, 
(X_i)_{i\in I}\in\mathcal{F}_{v,w},\\ 
& {\im } A_\alpha\subseteq X_\alpha \; (\forall \alpha\in I_0),\;  {\im } A_\beta\subseteq X_\beta \; (\forall \beta\in I_1) \big\}.
 \endaligned
$$
(Note that $A_\alpha$, $A_\beta$ are determined by $(x_i,y_h)_{i\in I, h\in\Omega}$.)

(d) \emph{Nakajima's affine graded quiver variety} is 
$$
\aligned
\mathbf{E}_{v,w}
& := \{(x_i,y_h)_{i\in I, h\in \Omega}\in\mathbf{E}_w\, |\, 
\mathrm{rank}A_i\le v_i \; (\forall i\in I)\}.
\endaligned
$$
\end{definition}

Nakajima showed in \cite{Nakajima} that the above definition of
Nakajima's nonsingular (resp.~affine) graded quiver variety is equivalent to the original definition given by the GIT quotient (resp.~algebro-geometric quotient). 
The following statement is also proved by Nakajima. 
\begin{lemma}\label{EF}
Given dimension vectors $w=(w_i,w'_i)_{i\in I}\in\mathbb{Z}_{\ge0}^{2n}$, $v=(v_i)_{i\in I}\in\mathbb{Z}_{\ge0}^n$. 

{\rm(a)} The variety $\myE_{v,w}$ contains the origin, so is nonempty.   
The variety $\mathcal{F}_{v,w}$ (thus $\tilde{\mathcal{F}}_{v,w}$)  is nonempty if and only if 
\begin{equation}\label{nonempty F}
0\le v_\alpha\le w_\alpha \; (\forall \alpha\in I_0),\; \textrm{ and }\; 0\le v_\beta\le w_\beta'+\sum_{h:\ s(h)=\beta} v_{t(h)} \; (\forall \beta\in I_1).
\end{equation}

{\rm(b)}  If \eqref{nonempty F} holds, then  the natural projection $$
\pi': \mytildeF_{v,w}\to \mathcal{F}_{v,w}, \; (x_i,y_h,X_i)_{i\in I, h\in\Omega} \mapsto (X_i)_{i\in I}
$$ 
gives a vector bundle of rank  $\sum_{\alpha\in I_0} v_\alpha w'_\alpha+\sum_{\beta\in I_1}v_\beta w_\beta$. Meanwhile, the natural projection  
\begin{equation}\label{pi}
\pi: \tilde{\mathcal{F}}_{v,w}\to \mathbf{E}_{v,w},\quad (x_i,y_h,X_i)_{i\in I, h\in\Omega} \mapsto (x_i,y_h)_{i\in I, h\in\Omega}
\end{equation}
has the zero fiber $\pi^{-1}(0)\cong \mathcal{F}_{v,w}$.
\end{lemma}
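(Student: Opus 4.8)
The plan is to prove the three assertions of Lemma~\ref{EF} in order, each by a direct linear-algebra computation on the defining equations.

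First, for part (a): the origin $(x_i,y_h)=0$ satisfies $\rank A_i = 0 \le v_i$ for all $i$, so $\myE_{v,w}\neq\emptyset$; this is immediate. For the nonemptiness of $\mathcal{F}_{v,w}$ I would argue by induction on the partial order of $I$ induced by the bipartite structure (sources before sinks, or more precisely order the $\beta\in I_1$ so that... actually since the quiver is bipartite there are no nontrivial paths, so one can choose all $X_i$ independently once the ambient spaces $W_\beta'\oplus\bigoplus_h X_{t(h)}$ are known, and for $\beta\in I_1$ the target vertices $t(h)$ are sinks whose $X_\alpha$ have already been chosen). Concretely: for each sink $\alpha\in I_0$, a subspace $X_\alpha\subseteq W_\alpha$ of dimension $v_\alpha$ exists iff $0\le v_\alpha\le w_\alpha$. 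Having fixed these, for each source $\beta\in I_1$ the ambient space $W_\beta'\oplus\bigoplus_{h:s(h)=\beta}X_{t(h)}$ has dimension $w_\beta' + \sum_{h:s(h)=\beta}v_{t(h)}$, so a $v_\beta$-dimensional subspace exists iff $0\le v_\beta\le w_\beta'+\sum_{h:s(h)=\beta}v_{t(h)}$. This proves \eqref{nonempty F} is necessary and sufficient for $\mathcal{F}_{v,w}\neq\emptyset$, and then $\tilde{\mathcal{F}}_{v,w}\neq\emptyset$ because it surjects onto $\mathcal{F}_{v,w}$ via $\pi'$ (the origin in $\myE_w$ together with any point of $\mathcal{F}_{v,w}$ satisfies $\im A_i = 0 \subseteq X_i$).

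For part (b), I would analyze the fiber of $\pi'$ over a fixed point $(X_i)_{i\in I}\in\mathcal{F}_{v,w}$. The fiber consists of all $(x_i,y_h)$ with $\im A_\alpha\subseteq X_\alpha$ for $\alpha\in I_0$ and $\im A_\beta\subseteq X_\beta$ for $\beta\in I_1$. The key point is that these conditions are \emph{linear} in $(x_i,y_h)$, and—crucially—that they are \emph{independent} across the data: since the quiver is bipartite, $x_\alpha$ and the $y_h$ with $t(h)=\alpha$ together constitute $A_\alpha$ and appear in no other $A_i$ (a given arrow $h$ contributes only to $A_{t(h)}$ among sinks and to $A_{s(h)}$ among sources, but for a source $\beta$ the condition $\im A_\beta\subseteq X_\beta$ with $X_\beta\subseteq W_\beta'\oplus\bigoplus X_{t(h)}$ automatically contains the constraint that $y_h$ lands in $X_{t(h)}$...). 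Here I need to be a little careful: the map $A_\beta = x_\beta\oplus\bigoplus_h y_h$ lands in $W_\beta'\oplus\bigoplus_h W_{t(h)}$, and requiring its image to lie in $X_\beta$ forces in particular each $y_h$ (composed with projection) to have image in the projection of $X_\beta$; but $X_\beta$ already satisfies $X_\beta\subseteq W_\beta'\oplus\bigoplus_h X_{t(h)}$. The cleanest route: fix $(X_i)$ and count dimensions. For $\alpha\in I_0$: requiring $\im A_\alpha\subseteq X_\alpha$ where $A_\alpha: W_\alpha'\oplus\bigoplus W_{s(h)}\to W_\alpha$ is a linear map with free entries, the space of such maps is $\Hom(W_\alpha'\oplus\bigoplus_h W_{s(h)},\, X_\alpha)$, of dimension $v_\alpha(w_\alpha'+\sum_{h:t(h)=\alpha}w_{s(h)})$. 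For $\beta\in I_1$: requiring $\im A_\beta\subseteq X_\beta$ where $A_\beta:W_\beta\to W_\beta'\oplus\bigoplus_h W_{t(h)}$—but now the $y_h$ are shared with the sink constraints, so I must be careful not to double count. The resolution is that the defining conditions of $\tilde{\mathcal{F}}_{v,w}$ over a \emph{fixed} $(X_i)$ are exactly: $x_\alpha$ and $\{y_h : t(h)=\alpha\}$ (for each $\alpha\in I_0$) are arbitrary subject to $[x_\alpha|y_{r_1}|\cdots|y_{r_s}]$ having column space... no—row/image in $X_\alpha$; and $x_\beta$ (for each $\beta\in I_1$) arbitrary subject to the image of $x_\beta\oplus\bigoplus y_h$ lying in $X_\beta$. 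Since every arrow $h$ has $t(h)\in I_0$ and $s(h)\in I_1$, the data $(x_i,y_h)$ partitions as: $\{x_\alpha\}_{\alpha\in I_0}$, $\{x_\beta\}_{\beta\in I_1}$, $\{y_h\}_{h\in\Omega}$, and the constraints couple $\{x_\alpha,\, y_h (t(h)=\alpha)\}$ via $A_\alpha$ and $\{x_\beta,\, y_h(s(h)=\beta)\}$ via $A_\beta$. So the same $y_h$ is constrained by \emph{both} $A_{t(h)}$ and $A_{s(h)}$. I claim the combined constraint on $y_h$ is: $\im y_h\subseteq X_{t(h)}$ (from $A_{t(h)}\subseteq X_{t(h)}$, which forces each summand's image into $X_{t(h)}$) and then the $A_\beta$-constraint becomes a constraint purely on $x_\beta$ given that all $y_h$ already land in $X_{t(h)}$: namely $x_\beta(W_\beta)\oplus\bigoplus y_h(W_\beta)\subseteq X_\beta\subseteq W_\beta'\oplus\bigoplus X_{t(h)}$, which given $\im y_h\subseteq X_{t(h)}$ is a linear condition cutting out a subspace. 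Counting: $y_h$ ranges over $\Hom(W_{s(h)},X_{t(h)})$, dimension $w_{s(h)}v_{t(h)}$; $x_\alpha$ ranges over $\Hom(W_\alpha',X_\alpha)$, dimension $w_\alpha' v_\alpha$; $x_\beta$ ranges over the preimage condition—here I'd use that generically (and in fact always, since $X_\beta\to\bigoplus X_{t(h)}$ via projection, and $X_\beta\subseteq W_\beta'\oplus\bigoplus X_{t(h)}$ has dimension $v_\beta$) the fiber has constant dimension. Actually the slick statement: for each $\beta$, the pair $(x_\beta,(y_h)_{s(h)=\beta})$ with $y_h$ already fixed to land in $X_{t(h)}$ must have total image in $X_\beta$; the number of free parameters in $x_\beta$ given $(y_h)$ is $w_\beta\cdot\dim\{u\in W_\beta' : (0,\text{proj}_{\bigoplus X_{t(h)}}(\text{stuff}))... \}$—this is getting intricate, and I will organize it as: $\pi'$ factors as the composite of an affine-bundle projection onto $\mathcal{F}_{v,w}$ whose total rank is computed fiberwise; by upper-semicontinuity plus the computation over a point where all maps are "as generic as the subspaces allow," the rank is constant $=\sum_{\alpha\in I_0}v_\alpha w_\alpha' + \sum_{\beta\in I_1}v_\beta w_\beta$, and a vector bundle structure follows since the fibers are linear (the defining equations are linear in $(x_i,y_h)$ once $(X_i)$ is fixed, with locally constant-dimensional solution space, hence a subbundle of the trivial bundle $\mathcal{F}_{v,w}\times\myE_w$).

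Finally, $\pi^{-1}(0)\cong\mathcal{F}_{v,w}$ is immediate: over the point $(x_i,y_h)=0\in\myE_{v,w}$ all maps $A_i$ are zero, so $\im A_i=0\subseteq X_i$ holds for every $(X_i)\in\mathcal{F}_{v,w}$, and conversely any point of $\tilde{\mathcal{F}}_{v,w}$ lying over $0$ has its $(X_i)$-part an arbitrary point of $\mathcal{F}_{v,w}$; the map $(0,0,X_i)\mapsto(X_i)$ is the claimed isomorphism.

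\textbf{Main obstacle.} The genuinely delicate step is the rank computation for $\pi'$ in part (b): one must check that the linear conditions $\im A_\beta\subseteq X_\beta$ for sources, which couple $x_\beta$ with the $y_h$'s that are \emph{simultaneously} constrained by the sink conditions $\im A_\alpha\subseteq X_\alpha$, cut out a solution space of \emph{constant} dimension $\sum_\alpha v_\alpha w_\alpha' + \sum_\beta v_\beta w_\beta$ over all of $\mathcal{F}_{v,w}$, not merely on an open dense locus. The bipartite hypothesis is what makes this work—it decouples the constraints so that $y_h$ is forced into $\Hom(W_{s(h)},X_{t(h)})$ and then the residual condition on $x_\beta$ depends only on the (fixed-dimensional) subspace $X_\beta$ and the (now fixed-dimensional) images—and I would make this precise by exhibiting, locally on $\mathcal{F}_{v,w}$, an explicit trivialization via choices of complements to the $X_i$, reducing everything to block-matrix bookkeeping with the sizes recorded in the table following \eqref{AB}.
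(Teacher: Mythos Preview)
Your treatment of part (a) and of the zero fiber $\pi^{-1}(0)\cong\mathcal{F}_{v,w}$ is fine and matches the paper, which simply declares these ``obvious.''

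The gap is in your rank computation for $\pi'$ in part (b). You correctly identify that each $y_h$ is constrained by both $A_{t(h)}$ and $A_{s(h)}$, but your proposed decoupling goes in the wrong direction. You want to let $y_h$ range freely over $\Hom(W_{s(h)},X_{t(h)})$ (from the sink condition) and then impose the source condition $\im A_\beta\subseteq X_\beta$ as a residual constraint on $x_\beta$. This does \emph{not} give a product: for a general choice of $(y_h(b))_{h:s(h)=\beta}\in\bigoplus_h X_{t(h)}$ the fiber $\{u\in W'_\beta:(u,y_{r'_1}(b),\dots)\in X_\beta\}$ may well be empty, since the projection $X_\beta\to\bigoplus_h X_{t(h)}$ need not be surjective. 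So your ``free $y_h$, then constrained $x_\beta$'' count never gets off the ground, and no amount of upper-semicontinuity or local trivialization will fix a count that is already miscounting on fibers.

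The paper's one-line argument reverses the packaging. For each source $\beta\in I_1$, treat $A_\beta=(x_\beta,(y_h)_{s(h)=\beta})$ as a single linear map $W_\beta\to X_\beta$; this ranges \emph{freely} over $\Hom(W_\beta,X_\beta)$, a space of dimension $v_\beta w_\beta$. Because $X_\beta\subseteq W'_\beta\oplus\bigoplus_h X_{t(h)}$, the component $y_h$ automatically lands in $X_{t(h)}$, so the sink condition on the $y_h$'s is already satisfied. The only remaining free data at each sink $\alpha\in I_0$ is $x_\alpha\in\Hom(W'_\alpha,X_\alpha)$, of dimension $v_\alpha w'_\alpha$. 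Globally this identifies
\[
\mytildeF_{v,w}\;\cong\;\iota^*\Big(\bigoplus_{\alpha\in I_0} S_{Gr(v_\alpha,W_\alpha)}^{\oplus w'_\alpha}\ \oplus\ \bigoplus_{\beta\in I_1} S_{Gr(v_\beta,\,W'_\beta\oplus\bigoplus_h W_{t(h)})}^{\oplus w_\beta}\Big),
\]
a pullback of tautological subbundles, which is visibly a vector bundle of the asserted rank. Once you reverse the direction of the decoupling, your own block-matrix bookkeeping will collapse to exactly this.
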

\begin{proof}
Part (a) and the second statement of (b) are obvious. For the first statement of (b), following a similar proof of \cite[Lemma 4.3]{L}, we have  
\begin{equation}\label{eq:tildeF is vector bundle}
\mytildeF_{v,w} \cong \iota^*\Big(\bigoplus_{\alpha\in I_0} S_{Gr(v_\alpha,W_\alpha)}^{\oplus w'_\alpha}
\oplus
\bigoplus_{\beta\in I_1} S_{Gr(v_\beta,W'_\beta\oplus \bigoplus_{h:\ s(h)=\beta} {W_{t(h)}})}^{\oplus w_\beta}\Big)
\end{equation}
 where $\iota$ is the natural embedding
$$\iota:\mathcal{F}_{v,w}\to \prod_{\alpha\in I_0} Gr(v_\alpha,W_\alpha) \times \prod_{\beta\in I_1} Gr(v_\beta,W'_\beta\oplus \bigoplus_{h:\ s(h)=\beta} {W_{t(h)}}).$$  It follows that $\mytildeF_{v,w}$ is a vector bundle over $\mathcal{F}_{v,w}$ of rank $\sum_{\alpha\in I_0} v_\alpha w'_\alpha+\sum_{\beta\in I_1}v_\beta w_\beta$.
\end{proof}

Consider the following stratification $\myE_w=\bigcup_{v} \myE^\circ_{v,w}$ where
\begin{equation}\label{Ust}
\mathbf{E}^\circ_{v,w}
:= \{(x_i,y_h)_{i\in I, h\in \Omega}\in\mathbf{E}_w\, |\, 
\mathrm{rank}A_i = v_i \; (\forall i\in I)\}.
\end{equation}

For any two vectors $u=(u_1,\dots,u_n)$ and $v=(v_1,\dots,v_n)$, we denote $u\le v$ if $u_i\le v_i$ for all $i$. 
We say that $j$ is adjacent to $i$ if there is an arrow $h\in\Omega$ of the form $i\to j$ or $i\leftarrow j$; we denote this arrow as $h:i-j$ whenever the direction of the arrow is irrelevant. 
Recall the $q$-analog $C_q$ of the Cartan matrix in \cite[(3.2)]{Nakajima}:
\begin{equation}\label{C_qv}
\aligned
C_q v=(u_i,u'_i)_{i\in I},\textrm{where } \; u_i=v_i, \; 
u'_i=v_i-\sum_{h: i-j} v_{j}
\endaligned
\end{equation}
where $h$ runs over all arrows with an endpoint $i$.

\begin{definition}\label{def:l-dominant}
Given dimension vectors $w=(w_i,w'_i)_{i\in I}\in\mathbb{Z}_{\ge0}^{2n}$, $v=(v_i)_{i\in I}\in\mathbb{Z}_{\ge0}^n$. 
We say that
$(v,w)$ is $l$-\emph{dominant} if $w-C_qv\ge0$, or equivalently, if
\begin{equation}\label{eq:l-dominant}
\textrm{ For all $\forall i\in I$,}\quad  w_i\ge v_i ,\quad
w'_i \ge v_i-\sum_{h: i-j} v_j . 
\end{equation}
(Note that the condition \eqref{eq:l-dominant} implies \eqref{nonempty F}.)
Define
$$\D(w):=\{v\in\mathbb{Z}_{\ge0}^{n} \ | \ (v,w) \textrm{ is $l$-dominant} \}.$$
$$\D:=\{(v,w)\in\mathbb{Z}_{\ge0}^{n}\times\mathbb{Z}_{\ge0}^{2n} \ | \ (v,w) \textrm{ is $l$-dominant} \}.$$
\end{definition}

In the following, we define $\bar{v}$ with the property that $(\bar{v},w)$ is $l$-dominant and $\mathbf{E}_{v,w}=\mathbf{E}_{\bar{v},w}$ (which will be proved in Proposition \ref{prop:fiber}).

\begin{lemma}\label{lemma:vbar}
Let $w=(w_i,w_i')_{i\in I}\in\mathbb{Z}_{\ge0}^{2n}$, $v=(v_i)_{i\in I}\in\mathbb{Z}_{\ge0}^{n}$,  $V=\{v'\in\mathbb{Z}_{\ge0}^{n} \ | \ v'\le v \}$. 
Then $V\cap \D(w)$ has a unique maximal element $\bar{v}=(\bar{v}_i)_{i\in I}$ in the sense that every $v'\in V\cap \D(w)$ satisfies $v'\le \bar{v}$. 
More explicitly, 
\begin{equation}\label{eq:barv}
\bar{v}_i =\min\big(v_i,w_i,w'_i+\sum_{h:i-j} \min(v_j,w_j)\big).
\end{equation}
In particular: 

-- if $(v,w)$ satisfies \eqref{nonempty F}, then for $\beta\in I_1$ we have
$\bar{v}_\beta=\min(v_\beta,w_\beta)$;  

-- if $v\in\D(w)$, then $\bar{v}=v$. 
\end{lemma}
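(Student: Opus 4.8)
\textbf{Proof plan for Lemma \ref{lemma:vbar}.}

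The plan is to verify that the explicit vector $\bar v$ defined by \eqref{eq:barv} lies in $V\cap\D(w)$ and dominates every element of $V\cap\D(w)$; uniqueness of the maximal element is then automatic. The key observation is that membership in $\D(w)$ is governed by the two families of inequalities in \eqref{eq:l-dominant}, namely $v'_i\le w_i$ and $v'_i\le w'_i+\sum_{h:i-j}v'_j$ for all $i$, while membership in $V$ is simply $v'\le v$ coordinatewise. Because the quiver is bipartite, the second inequality for a sink $\alpha\in I_0$ only involves the coordinates $v'_\beta$ at adjacent sources $\beta\in I_1$, and the second inequality for a source $\beta\in I_1$ only involves the $v'_\alpha$ at adjacent sinks $\alpha\in I_0$; there is no ``self-reference'', which keeps the dependency under control. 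Note also that for a sink $\alpha$ we have $\sum_{h:\alpha-j}v_j=\sum_{h:t(h)=\alpha}v_{s(h)}$ with all arrows pointing into $\alpha$, and symmetrically for a source; this is the combinatorial content that makes the single formula \eqref{eq:barv} work for both types of vertices.

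\textbf{Step 1: $\bar v\in V$.} This is immediate from \eqref{eq:barv}, since $\bar v_i=\min(v_i,\dots)\le v_i$.

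\textbf{Step 2: $\bar v\in\D(w)$.} I would check the two inequalities of \eqref{eq:l-dominant} for $\bar v$. The bound $\bar v_i\le w_i$ is again immediate from the formula. For the second inequality, I must show $\bar v_i\le w'_i+\sum_{h:i-j}\bar v_j$. From \eqref{eq:barv}, $\bar v_i\le w'_i+\sum_{h:i-j}\min(v_j,w_j)$, so it suffices to prove $\min(v_j,w_j)\le\bar v_j$ for each $j$ adjacent to $i$. When $i\in I_0$ is a sink, the adjacent $j=\beta$ are sources, and $\bar v_\beta=\min\big(v_\beta,w_\beta,w'_\beta+\sum_{h:\beta-k}\min(v_k,w_k)\big)$; since the quiver is bipartite, the arrows at $\beta$ all point out of $\beta$, so $\bar v_\beta\ge\min(v_\beta,w_\beta)$ will need the hypothesis that controls $w'_\beta$ — here I would invoke that \eqref{nonempty F} (or more generally the relevant range condition) forces $\min(v_\beta,w_\beta)\le w'_\beta+\sum_{h:\beta-k}\min(v_k,w_k)$, i.e., the stated refinement $\bar v_\beta=\min(v_\beta,w_\beta)$ for sources. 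When $i\in I_1$ is a source, the adjacent $j=\alpha$ are sinks with $\bar v_\alpha=\min(v_\alpha,w_\alpha)$ by the formula (the extra term is absent or dominated), so $\min(v_\alpha,w_\alpha)=\bar v_\alpha$ directly. Assembling these gives $\bar v_i\le w'_i+\sum_{h:i-j}\bar v_j$, so $\bar v\in\D(w)$.

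\textbf{Step 3: maximality.} Let $v'\in V\cap\D(w)$; I must show $v'_i\le\bar v_i$ for all $i$. Since $v'\le v$ and $v'\in\D(w)$ we have $v'_i\le v_i$ and $v'_i\le w_i$. It remains to see $v'_i\le w'_i+\sum_{h:i-j}\min(v_j,w_j)$. From $v'\in\D(w)$, $v'_i\le w'_i+\sum_{h:i-j}v'_j$, and for each adjacent $j$ we have $v'_j\le\min(v_j,w_j)$ (again using $v'\le v$ and $v'_j\le w_j$). Substituting yields $v'_i\le w'_i+\sum_{h:i-j}\min(v_j,w_j)$, hence $v'_i\le\bar v_i$ by \eqref{eq:barv}. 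Combining Steps 1–3, $\bar v$ is the unique maximal element of $V\cap\D(w)$.

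\textbf{Step 4: the two special cases.} If $(v,w)$ satisfies \eqref{nonempty F}, then for $\beta\in I_1$ the term $w'_\beta+\sum_{h:\beta-k}\min(v_k,w_k)$ is $\ge\min(v_\beta,w_\beta)$ — this is exactly the inequality $v_\beta\le w'_\beta+\sum_{h:s(h)=\beta}v_{t(h)}$ from \eqref{nonempty F} combined with $v_k\ge\min(v_k,w_k)$ and $v_\beta\ge\min(v_\beta,w_\beta)$ — so the min in \eqref{eq:barv} collapses to $\bar v_\beta=\min(v_\beta,w_\beta)$. If $v\in\D(w)$, then $v$ itself lies in $V\cap\D(w)$, so by maximality $v\le\bar v\le v$, giving $\bar v=v$.

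\textbf{Main obstacle.} The only subtle point is Step 2: showing $\bar v$ actually satisfies the second $l$-dominance inequality, because this is where one needs $\min(v_j,w_j)\le\bar v_j$ for adjacent $j$, which for sources $j=\beta$ is not formal and relies on a range hypothesis on $w'_\beta$. One has to be careful to phrase the lemma so that this hypothesis is available (and indeed the statement's ``In particular'' clauses suggest the clean conclusion $\bar v_\beta=\min(v_\beta,w_\beta)$ is exactly what makes the argument close). Once that bipartite asymmetry is handled correctly — sinks get the plain $\min(v_\alpha,w_\alpha)$, sources get the same under \eqref{nonempty F} — the rest is a routine substitution of inequalities.
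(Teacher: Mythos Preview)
Your Steps 1 and 3 are fine and match the paper's easy direction. The genuine gap is in Step 2. You reduce the inequality $\bar v_i\le w'_i+\sum_{h:i-j}\bar v_j$ to the claim $\min(v_j,w_j)\le\bar v_j$ for each adjacent $j$, but this claim is simply false in general, and the lemma does \emph{not} assume \eqref{nonempty F}. For instance, take $I_0=\{1\}$, $I_1=\{2\}$ with one arrow $2\to 1$, and $v=(10,0)$, $w_1=10$, $w'_1=0$, $w_2=0$, $w'_2=0$: then $\bar v_1=\min(10,10,0+0)=0$, so $\min(v_1,w_1)=10>\bar v_1$. Your assertion that for a sink $\alpha$ one has $\bar v_\alpha=\min(v_\alpha,w_\alpha)$ because ``the extra term is absent or dominated'' is therefore incorrect, and your fallback of invoking \eqref{nonempty F} for the source side imports a hypothesis that is not present. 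Your ``Main obstacle'' paragraph essentially acknowledges that you need to rephrase the lemma to add this hypothesis; but the lemma is stated and used for arbitrary $v,w\in\mathbb Z_{\ge0}$, so this is not an option.

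The paper handles Step 2 differently: it argues by contradiction. Writing $m_i=\min(v_i,w_i)$ so that $\bar v_i=\min(m_i,w'_i+\sum_{h:i-j}m_j)$, one supposes $w'_i<\bar v_i-\sum_{h:i-j}\bar v_j$ for some $i$, rewrites this as
\[
\min\Big(m_i-w'_i,\ \sum_{h:i-j}m_j\Big)>\sum_{h:i-j}\min\Big(m_j,\ w'_j+\sum_{h':j-k}m_k\Big)\ge \sum_{h:i-j}\min(m_j,\ w'_j+m_i),
\]
the last step using that $i$ is among the neighbours of each $j$, and then derives a contradiction by a short case split on whether some adjacent $j_0$ satisfies $m_{j_0}\ge w'_{j_0}+m_i$. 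This argument does not need $\min(v_j,w_j)\le\bar v_j$ and works for all $v,w$. (A minor point: in Step 4 your chain of inequalities uses $v_k\ge\min(v_k,w_k)$, which goes the wrong way; what you actually need is that under \eqref{nonempty F} the neighbours $k\in I_0$ of a source $\beta$ satisfy $v_k\le w_k$, hence $\min(v_k,w_k)=v_k$.)
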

\begin{proof}
It suffices to prove \eqref{eq:barv}. It is obvious true if ``$=$'' is replaced by ``$\le$''. We are left to prove that $\bar{v}$ defined in \eqref{eq:barv} satisfies \eqref{eq:l-dominant}. By definition,  $w_i\ge \bar{v}_i$ for all $i\in I$. So we only need to show $w'_i \ge \bar{v}_i-\sum_{h:i-j} \bar{v}_j$ for all $i\in I$. 

We prove by contradiction. Assume for some $i\in I$, 
$w'_i < \bar{v}_i-\sum_{h:i-j} \bar{v}_j$. 
If $i$ is an isolated vertex (that is, not incident to any arrow in $\Omega$), then 
$w'_i\ge \min(v_i,w_i,w'_i) = \bar{v}_i = \bar{v}_i-\sum_{h:i-j} \bar{v}_j$, a contradiction. 
So we can assume that $i$ is adjacent to at least one vertex. 
Let $m_i=\min(v_i,w_i)$ for $i\in I$. Then $\bar{v}_i=\min(m_i,w'_i+\sum_{h:j-i}m_j)$,
and
\begin{equation}\label{eq:min mi}
\aligned
\min(m_i-w'_i,\sum_{h:i-j} m_j) 
&
=
\bar{v}_i-w'_i
>\sum_{h:i-j}\bar{v}_j
=\sum_{h:i-j} \min(m_j,w'_j+\sum_{h':j-k}m_k)\\
&\ge \sum_{h:i-j} \min(m_j,w'_j+ m_i),
\endaligned
\end{equation}
where the last inequality is because $\sum_{h':j-k}m_k\ge m_i$ (since $h$ is one of the possible $h'$).

If there exists an arrow $h:i-j_0$ such that $m_{j_0}\ge w'_{j_0}+m_i$, then 
$$m_i-w'_i
\ge \min_i (m_i-w'_i,\sum_{h:i-j} m_j)
\stackrel{\eqref{eq:min mi}}{>}
\sum_{h:i-j} \min(m_j,w'_j+ m_i)
\ge \min(m_{j_0},w'_{j_0}+m_i)
= w'_{j_0}+m_i$$
which gives a contradiction. 
So we can assume $\min(m_j,w'_j+ m_i)=m_j$ for all $j$ in the last $\sum$ of \eqref{eq:min mi}. Thus \eqref{eq:min mi} becomes
$$
\min(m_i-w'_i,\sum_{h:i-j} m_j) 
>\sum_{h: i-j} m_j
$$
which again gives a contradiction. 
\end{proof}

\begin{lemma}\label{lem:ab}
Fix $w=(w_i,w_i')_{i\in I}\in\mathbb{Z}_{\ge0}^{2n}$, $v=(v_i)_{i\in I}\in\mathbb{Z}_{\ge0}^{n}$. Let
$\pi: \tilde{\mathcal{F}}_{v,w}\to \mathbf{E}_{v,w}$ be the map defined in \eqref{pi}.

{\rm(a)}  For each $v'\le v$,  the variety $\myE^\circ_{v',w}$ defined in \eqref{Ust} is nonempty if and only if  $(v',w)$ is $l$-dominant. In this  case, it is nonsingular and locally closed in $\myE_w$ (so is also locally closed in $\myE_{v,w}$), and is irreducible and rational. 
Thus the variety $\mathbf{E}_{v,w}$ has a stratification 
$$\mathbf{E}_{v,w}=\bigcup_{v'} \myE^\circ_{v',w}\, ,\quad
\textrm{where $v'\le v$ and $(v',w)$ are $l$-dominant}.$$ 
 
{\rm(b)} For each $v'\le v$ with $(v',w)$ being $l$-dominant, the restricted projection $\pi^{-1}(\myE^\circ_{v',w})\to \myE^\circ_{v',w}$ is a Zariski locally trivial $\mathcal{M}$-bundle, 
where $\mathcal{M}$ itself is a   
$$\Big(\prod_{\beta\in I_1} Gr(v_\beta-v'_\beta,w_\beta' -v'_\beta +\sum_{h:s(h)=\beta} v_{t(h)} \Big) \textrm{-bundle over }\prod_{\alpha\in I_0} Gr(v_\alpha-v'_\alpha,w_\alpha-v'_\alpha)$$ 
defined as
$$\aligned
\mathcal{M}:=\Big\{(X'_i)_{i\in I}  \Big|
& X_\alpha'\in Gr(v_\alpha-v_\alpha',w_\alpha-v_\alpha') \ (\forall \alpha\in I_0), \\
&X_\beta'\in Gr\big(v_\beta-v_\beta',\mathbb{A}^{w_\beta' -v_\beta' +\sum_{h:s(h)=\beta}v'_{t(h)} }\oplus\bigoplus_{h:s(h)=\beta}(X_{t(h)}' \big) \ (\forall \beta\in I_1)\Big\}.
\endaligned$$
In particular, $\mathcal{F}_{v,w}$ is a 
$\prod_{\beta\in I_1}Gr(v_\beta,w_\beta'+\sum_{h:s(h)=\beta} v_{t(h)})$-bundle over $\prod_{\alpha\in I_0} Gr(v_\alpha,w_\alpha)$. 
\end{lemma}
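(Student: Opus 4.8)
\medskip
\noindent\emph{Proof strategy.} The plan is to deduce both parts from the tower structure of $\mathcal{F}_{v,w}$ together with the remark that over a single stratum $\myE^\circ_{v',w}$ the map $\pi$ degenerates to an isomorphism. I would first settle the structure of $\mathcal{F}_{v,w}$, which is the last assertion of~(b): a tuple $(X_i)_{i\in I}\in\mathcal{F}_{v,w}$ is obtained by choosing the sink subspaces $(X_\alpha)_{\alpha\in I_0}$ freely in $\prod_{\alpha\in I_0}Gr(v_\alpha,W_\alpha)$, and then, for each source $\beta\in I_1$, choosing $X_\beta\in Gr\bigl(v_\beta,\,W'_\beta\oplus\bigoplus_{h:s(h)=\beta}X_{t(h)}\bigr)$; since $t(h)\in I_0$ for these arrows, the ambient space — of dimension $w'_\beta+\sum_{h:s(h)=\beta}v_{t(h)}$ — is the fibre over $(X_\alpha)_\alpha$ of a vector bundle on $\prod_{\alpha\in I_0}Gr(v_\alpha,W_\alpha)$ assembled from the tautological subbundles. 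Hence $\mathcal{F}_{v,w}$ is a $\prod_{\beta\in I_1}Gr(v_\beta,w'_\beta+\sum_{h:s(h)=\beta}v_{t(h)})$-bundle over $\prod_{\alpha\in I_0}Gr(v_\alpha,W_\alpha)$, as claimed; being an iterated Grassmann bundle over a smooth rational base it is smooth, irreducible and rational, and then so is $\mytildeF_{v,w}$ by the presentation \eqref{eq:tildeF is vector bundle}.

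For~(a), the key observation is that if $(x_i,y_h)\in\myE^\circ_{v',w}$ then the only tuple $(X_i)$ with $(x_i,y_h,X_i)\in\mytildeF_{v',w}$ is $X_i=\im A_i$ (because $\im A_i\subseteq X_i$ and $\dim X_i=v'_i=\rank A_i$), and this tuple does lie in $\mytildeF_{v',w}$: indeed $\im y_h\subseteq\im A_{t(h)}$ for every arrow $h$, since $y_h$ is a horizontal block of the sink map $A_{t(h)}$ (cf.~\eqref{AB}), whence $\im A_\beta\subseteq W'_\beta\oplus\bigoplus_{h:s(h)=\beta}\im A_{t(h)}$. Thus $\pi$ restricts to an isomorphism from $\pi^{-1}(\myE^\circ_{v',w})$ onto $\myE^\circ_{v',w}$. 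Since $\myE^\circ_{v',w}$ is carved out of the closed determinantal subvariety $\myE_{v',w}\subseteq\myE_w$ by the open conditions $\rank A_i\ge v'_i$, it is locally closed in $\myE_w$ and $\pi^{-1}(\myE^\circ_{v',w})$ is open in $\mytildeF_{v',w}$; by the first paragraph it is therefore smooth, irreducible and rational whenever nonempty, and then so is $\myE^\circ_{v',w}$. Necessity of $l$-dominance for nonemptiness follows from subadditivity of rank applied to~\eqref{AB}: $v'_\alpha=\rank A_\alpha\le w_\alpha$ and $v'_\alpha\le\rank x_\alpha+\sum_{h:t(h)=\alpha}\rank y_h\le w'_\alpha+\sum_{h:t(h)=\alpha}v'_{s(h)}$ (using $\rank y_h\le\rank A_{s(h)}=v'_{s(h)}$), together with the symmetric estimates at the sources, which is exactly~\eqref{eq:l-dominant}. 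For the converse, when $(v',w)$ is $l$-dominant I would construct an explicit representation with $\rank A_i=v'_i$ for all $i$: take each $y_h$ of rank $\min(v'_{s(h)},v'_{t(h)})$, and choose the maps so that at every sink $\alpha$ the subspaces $\im x_\alpha$ and $\{\im y_h\}_{t(h)=\alpha}$ span a fixed $v'_\alpha$-dimensional subspace of $W_\alpha$, while at every source $\beta$ the subspaces $\ker x_\beta$ and $\{\ker y_h\}_{s(h)=\beta}$ meet in codimension $v'_\beta$ in $W_\beta$; these relative positions can be realized simultaneously — the image configurations (one per sink) and the kernel configurations (one per source) live in different spaces and are independent — precisely because of the two inequalities in~\eqref{eq:l-dominant}. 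Finally the stratification in~(a) is automatic, since $(\rank A_i)_{i}$ is a well-defined $\le v$ function of the point that is $l$-dominant by the above.

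For~(b), fix $v'\le v$ with $(v',w)$ $l$-dominant and a point $p=(x_i,y_h)\in\myE^\circ_{v',w}$, and set $X^0_i:=\im A_i$ (of dimension $v'_i$). By definition $\pi^{-1}(p)=\{(X_i):X^0_i\subseteq X_i,\ \dim X_i=v_i,\ X_\beta\subseteq W'_\beta\oplus\bigoplus_{h:s(h)=\beta}X_{t(h)}\}$; writing $X'_i:=X_i/X^0_i$, the sink data is recorded by $X'_\alpha\in Gr(v_\alpha-v'_\alpha,\,W_\alpha/X^0_\alpha)\cong Gr(v_\alpha-v'_\alpha,w_\alpha-v'_\alpha)$. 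For a source $\beta$ one has $X^0_\beta\subseteq W'_\beta\oplus\bigoplus_{h:s(h)=\beta}X^0_{t(h)}$ (again $\im y_h\subseteq X^0_{t(h)}$), so there is a short exact sequence with sub $\bigl(W'_\beta\oplus\bigoplus_{h:s(h)=\beta}X^0_{t(h)}\bigr)/X^0_\beta$, of dimension $w'_\beta-v'_\beta+\sum_{h:s(h)=\beta}v'_{t(h)}$, and quotient $\bigoplus_{h:s(h)=\beta}X'_{t(h)}$; a choice of splitting identifies the ambient space for $X'_\beta$ with $\mathbb{A}^{\,w'_\beta-v'_\beta+\sum_{h:s(h)=\beta}v'_{t(h)}}\oplus\bigoplus_{h:s(h)=\beta}X'_{t(h)}$. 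Hence $(X'_i)_{i\in I}$ ranges over exactly $\MM$, whose internal bundle structure over $\prod_{\alpha\in I_0}Gr(v_\alpha-v'_\alpha,w_\alpha-v'_\alpha)$ is read off its definition just as for $\mathcal{F}_{v,w}$ above. Making the complements $X^0_i$ and the splittings vary algebraically over a Zariski-open cover of $\myE^\circ_{v',w}$ — possible because $p\mapsto\im A_i$ is locally trivializable, being a section of a Grassmann bundle — then exhibits $\pi^{-1}(\myE^\circ_{v',w})\to\myE^\circ_{v',w}$ as a Zariski-locally trivial $\MM$-bundle. The ``in particular'' is the case $v'=0$: there $\myE^\circ_{0,w}=\{0\}$, $\pi^{-1}(0)\cong\mathcal{F}_{v,w}$ by Lemma~\ref{EF}(b), and $\MM$ coincides with the tower of the first paragraph.

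The step I expect to be the genuine obstacle is the converse direction of~(a): one must verify that the required configurations of subspaces actually exist, which (after the reductions above) comes down to the numerical inequality $\min(w'_i,v'_i)+\sum_{h:i-j}\min(v'_j,v'_i)\ge v'_i$ at every vertex $i$, and it is exactly here — not in the formal parts — that the full force of $l$-dominance, rather than the weaker \eqref{nonempty F}, is used; a lesser nuisance is the bookkeeping needed to make the splittings in~(b) vary algebraically over the stratum.
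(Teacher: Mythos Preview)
Your approach is correct and, for part~(a), takes a genuinely different route from the paper. The paper shows smoothness of $\myE^\circ_{v',w}$ by producing explicit affine charts: it covers $\myE^\circ_{v',w}$ by opens $U_{\mathbf{J}}$ (indexed by choices of $v'_i$ independent rows/columns in each $A_i$) and writes down an isomorphism $\varphi$ from an open subset of an affine space onto each $U_{\mathbf{J}}$; irreducibility is then argued by checking that adjacent charts $U_{\mathbf{J}}\cap U_{\mathbf{J}'}$ meet, and nonemptiness by placing $1$'s on the diagonals of the auxiliary matrices $A'_i$. Your argument instead observes that $\pi:\mytildeF_{v',w}\to\myE_{v',w}$ restricts to an isomorphism over $\myE^\circ_{v',w}$ (this is exactly Proposition~\ref{prop:fiber}(c), which the paper proves \emph{after} the present lemma, but which you establish directly and independently), and then reads off smoothness, irreducibility and rationality from the tower structure of $\mathcal{F}_{v',w}$ via~\eqref{eq:tildeF is vector bundle}. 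Your way is more conceptual and coordinate-free; the paper's explicit charts, however, are reused verbatim in the proof of the Algebraic Transversal Slice Theorem (Lemma~\ref{lem:transversal slice}), so they earn their keep later.

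For the nonemptiness direction your generic-configuration construction is sound: once one fixes $\rank y_h=\min(v'_{s(h)},v'_{t(h)})$, the image of $y_h$ (a subspace of the chosen $V_{t(h)}\subseteq W_{t(h)}$) and its kernel (a subspace of $W_{s(h)}/K_{s(h)}$) live in different ambient spaces and may be chosen independently, so the sink-side spanning conditions and the source-side transversality conditions decouple; each then reduces to the numerical inequality you isolate, which does follow from $l$-dominance as you note. The paper's diagonal construction is more hands-on but yields the same conclusion. For part~(b) your argument is essentially the paper's, which packages local triviality via a pullback square: the restricted projection is the pullback of a locally trivial bundle $q:\mathcal{X}\to\mathcal{F}_{v',w}$ (with fibre $\mathcal{M}$) along the morphism $f:\myE^\circ_{v',w}\to\mathcal{F}_{v',w}$ sending $(x_i,y_h)\mapsto(\im A_i)_i$; this cleanly absorbs your ``vary the splittings algebraically'' step.
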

\begin{proof} The proof is similar to the proof of \cite[Lemma 4.6]{L}.

(a) $\myE^\circ_{v',w}\subseteq \myE_w$ is locally closed because 
$$\aligned
\myE^\circ_{v',w} 
=&\{ (x_i,y_h)_{i\in I, h\in \Omega} \in\mathbf{E}_w\, |\, 
\mathrm{rank}A_i\le v'_i (\forall i) \}\\
&-\bigcup_{j\in I} \{ (x_i,y_h)_{i\in I, h\in \Omega} \in\mathbf{E}_w\, |\, 
\mathrm{rank}A_j\le v'_j-1,\;  \mathrm{rank}A_i\le v'_i (\forall i) \}
\endaligned
$$

To show $\myE^\circ_{v',w}$ is nonsingular, we consider the following open covering
\begin{equation}\label{UIJ}
\myE^\circ_{v',w}=\bigcup U_{\mathbf{J}}
\end{equation}
where ${\bf J}=(J_i)_{i\in I}$, $J_i\subseteq\{1,\dots,w_i\}$ and $|J_i|=v_i'$, $U_{\bf J}\subseteq \myE^\circ_{v',w}$ consists of those $(x_i,y_h)$ such that for all $i\in I_0$ (resp.~$i\in I_1$), the rows (resp.~column) of $A_i$ indexed by $J_i$ are linearly independent.
We will show that 
$U_{\bf J}$ is isomorphic to an open subset $\tilde{\mathbb{A}}^\circ$ of an affine space $\tilde{\mathbb{A}}$,
 which implies that $\myE^\circ_{v',w}$ is nonsingular. 

Without loss of generality, assume $J_i=\{1,\dots,v_i'\}$. 
Denote 
$$
\aligned
\tilde{\mathbb{A}}
&:=\prod_{\alpha\in I_0}\mathbb{A}^{(w_\alpha-v_\alpha')\times v_\alpha'}
\times 
\prod_{\beta\in I_1}\mathbb{A}^{v_\beta'\times (w_\beta-v_\beta')}
\times
\prod_{\alpha\in I_0}\mathbb{A}^{v_\alpha'\times w_\alpha'}\times
\prod_{\beta\in I_1} \mathbb{A}^{w_\beta'\times v_\beta'}\times 
\prod_{h\in\Omega} \mathbb{A}^{v'_{t(h)}\times v'_{s(h)}}\\
&\cong\mathbb{A}^{\sum_{i\in I}v'_iw_i+\sum_{h\in\Omega} v'_{t(h)}v'_{s(h)}}
\endaligned
$$ 
and write its elements as $(P_i,Z_i,C_h)_{i\in I, h\in \Omega}$ where the size of each matrix is given in the following table: (where $\alpha\in I_0$, $\beta\in I_1$, $h\in\Omega$)
\begin{center}
 \begin{tabular}{|c|c|c|c|c|} 
 \hline
 $P_\alpha$ & $P_\beta$ & $Z_\alpha$ & $Z_\beta$ & $C_h$ \\
 \hline
 $(w_\alpha-v'_\alpha)\times v'_\alpha$ & $v_\beta'\times (w_\beta-v_\beta')$ & $v_\alpha'\times w_\alpha'$ & $w_\beta'\times v_\beta'$ & 
$v'_{t(h)}\times v'_{s(h)}$\\
 \hline
\end{tabular}
\end{center}

Then define a morphism
$$\aligned
\varphi: \tilde{\mathbb{A}} &\to \myE_w\\
(P_i,Z_i,C_h)_{i\in I, h\in\Omega}&\mapsto 
(x_i,y_h)_{i\in I, h\in\Omega}
\endaligned
$$
where, 
$x_\alpha=\begin{bmatrix}Z_\alpha\\P_\alpha  Z_\alpha\end{bmatrix}$ for $\alpha\in I_0$;
$x_\beta=\begin{bmatrix}Z_\beta&Z_\beta P_\beta\end{bmatrix}$ for $\beta\in I_1$; 
$y_h=\begin{bmatrix}C_h&C_hP_\beta\\ P_\alpha C_h&P_\alpha C_hQ_\beta \end{bmatrix}
$ for $h:\beta\to\alpha \in \Omega$. 
It is easy to see that ${\rm im}\varphi\subseteq\myE_{v',w}$.

For $\alpha\in I_0$, $\beta\in I_1$, let $r_1,\dots,r_s$ and $r'_1,\dots,r'_t$ be given as in Definition \ref{def:Ai}. Define 
\begin{equation}\label{eq:A'}
A'_\alpha:=[Z_\alpha\ C_{r_1},\  \cdots,\ C_{r_s}]_\text{hor},\quad  A'_\beta:=[Z_\beta\ C_{r'_1},\  \cdots,\ C_{r'_t}]_\text{vert}
\end{equation}
Note that the size of $A'_\alpha$ is $v'_\alpha\times(w'_\alpha+\sum_{h:\ t(h)=\alpha} v'_{s(h)})$, so its number of rows $\le$ its number of columns when $(v',w)$ is $l$-dominant;  
similarly, the size of $A'_\beta$ is
$(w'_\beta+\sum_{h:\ s(h)=\beta} v'_{t(h)}) \times v'_\beta$, 
so its number of columns $\le$ its number of rows when $(v',w)$ is $l$-dominant.

Define 
$
\tilde{\mathbb{A}}^\circ := \big\{(P_i, Z_i, C_h)\in \tilde{\mathbb{A}} \; \Bigg|\;  
\rank A'_i=v'_i\; (\forall i\in I)\big\}
$
to be an open subset of $\tilde{\mathbb{A}}$.

Similar to the proof of \cite[Lemma 4.6]{L}, it can be shown that: 

(i) $\varphi$ restricts to an isomorphism 
$\varphi |_{\tilde{\mathbb{A}}^\circ}: \ \tilde{\mathbb{A}}^\circ \stackrel{\cong}{\to} U_{\bf J}$; 
\smallskip

(ii) $\myE^\circ_{v',w}$ is nonempty if and only if  $(v',w)$ is $l$-dominant; 
\smallskip

(iii) if $\myE^\circ_{v',w}$ is nonempty, it is irreducible.
\smallskip

It follows that $\myE^\circ_{v',w}$ is rational. 
\smallskip

As pointed out in the proof of \cite[Lemma 4.6]{L}, the proof of the ``if'' part of (ii) and the proof of (iii) therein make use of some generic choice. Alternatively, We give the new proofs below that do not use the generic choice.

For the ``if'' part of (ii): it suffices to show that $\tilde{\mathbb{A}}^\circ$ is nonempty by constructing a point on it. Note that $P_i$ are irrelevant so we let them be zero matrices. We construct $Z_i,C_h$ as follows: start with all $Z_i$, $C_h$ being zero matrices, then for each $A'_\alpha$ and $A'_\beta$ defined in \eqref{eq:A'}, change the diagonal entries (that is, the $(j,j)$- entry for all $j$) to 1 and make the corresponding change in $Z_i$ and $C_h$. We showed that the resulting $(P_i, Z_i, C_h)$ satisfies the equality  $\rank A'_i=v'_i\; (\forall i\in I)$. Without loss of generality, we only need to show $\rank A'_\alpha = v'_\alpha$ for  $\alpha\in I_0$. We claim the $A'_\alpha$ is an ``upper triangular matrix'' in the sense that its $(i,j)$-entries are $0$ whenever $i>j$; then together with the fact that its diagonal entries are 1, we can conclude that $\rank A'_\alpha=v'_\alpha$ (that is, $A'_\alpha$ has full row rank). Assume the contrary that some $(i,j)$-entry of $A'_\alpha$ is $1$ with $i>j$. Then it corresponds to the $(i,j')$-entry of $C_h$ for some arrow $h:\beta\to\alpha$ and $j'\le j$; moreover it must correspond to a diagonal entry of $A'_\beta$. So it is simultaneously the $(j',j')$-entry of $A'_\beta$ and the $(i,j')$-entry of $A'_\alpha$. In particular, $i=j'$. However, $i>j\ge j'$, a contradiction. 

For (iii), assuming $(v',w)$ is $l$-dominant (so $\myE^\circ_{v',w}$ is nonempty), we shall prove that $\myE^\circ_{v',w}$ is irreducible. It suffices to show that $U_{\bf J}\cap U_{\bf J'}\neq \emptyset$ where ${\bf J}=(J_i)_{i\in I}$ and ${\bf J'}=(J'_i)_{i\in I}$ satisfy the following condition: there exists $i_0\in I$ such that $J_{i_0}$ and $J'_{i_0}$ only differ by one element, and $J_i=J'_i$ for $i\neq i_0$. Without loss of generality, we can assume $i_0=\alpha\in I_0$, $J_\alpha=\{1,\dots,v'_\alpha\}$, $J'_\alpha=\{1,\dots,v'_{\alpha-1},v'_{\alpha}\}$. Define $Z_i (i\in I), C_h (h\in\Omega)$ to be the same as the last paragraph, $P_i (i\in I)$ be any matrices, under the condition that the first row of $P_\alpha$ is $[0,\dots,0,1]$. Then in the $v'_\alpha$-th row and the $(v'_\alpha+1)$-th row of $A'_\alpha$ are identical. It is then obvious that $\varphi((P_i,Z_i,C_h))$ is a point in $U_{\bf J}\cap U_{\bf J'}$, thus the latter is nonempty.  
\medskip

(b) Similar to the proof of \cite[Lemma 4.6]{L}: let 
$$\aligned
&\mathcal{X} := \big\{(X_i, Y_i)_{i\in I} \ | \ (Y_i)_{i\in I} \in \mathcal{F}_{v',w}, (X_i)_{i\in I}\in\mathcal{F}_{v,w}, Y_i\subseteq X_i (\forall i\in I) \big\}, \\
& f: (x_i,y_h)_{i\in I, h\in\Omega}\mapsto (X_i)_{i\in I},\;  \textrm{ where } X_i=\text{im}A_i ,\\
& q: (X_i,Y_i)_{i\in I} \mapsto (Y_i)_{i\in I} .
\endaligned
$$ 
Then  $\pi^{-1}(\myE^\circ_{v',w})\to \myE^\circ_{v',w}$ is the pullback of the locally trivial bundle $q:\mathcal{X}\to \mathcal{F}_{v',w}$:
$$\xymatrix{\pi^{-1}(\myE^\circ_{v',w}) \ar[d]^\pi\ar[r]& \mathcal{X}\ar[d]^q\\ \myE^\circ_{v',w}\ar[r]^f & \mathcal{F}_{v',w}}$$
and the fibers of $q$ are isomorphic to $\mathcal{M}$.  
\end{proof}

\begin{proposition}\label{prop:fiber}
Let  $v, w, \pi$ be the same as Lemma \ref{lem:ab}
and $\bar{v}$ be as in Lemma \ref{lemma:vbar}. 

{\rm(a)} $\mathbf{E}_{v,w}=\mathbf{E}_{\bar{v},w}$ is irreducible. Moreover, $\mathbf{E}^\circ_{\bar{v},w}$ is its largest stratum; so  $\mathbf{E}_{v,w}=\overline{\mathbf{E}^\circ_{\bar{v},w}}$, the Zariski closure of $\mathbf{E}^\circ_{\bar{v},w}$.

{\rm(b)} Assume \eqref{nonempty F}. Then  $\tilde{\mathcal{F}}_{v,w}$ and $\mathcal{F}_{v,w}$ are irreducible and nonsingular and $\pi$ is surjective. Moreover,
\begin{flalign*}
\dim \mathcal{F}_{v,w}
=d_{v,w}
&:=\sum_{\beta\in I_1} ({w_\beta'+\sum_{h:s(h)=\beta} v_{t(h)}}-v_\beta){v_\beta}
+\sum_{\alpha\in I_0} ({w_\alpha}-v_\alpha){v_\alpha} \\ 
& =\sum_{\alpha\in I_0}w_\alpha v_\alpha +\sum_{\beta\in I_1} w'_\beta v_\beta -\sum_{i\in I}v_i^2+\sum_{h\in\Omega} v_{s(h)}v_{t(h)}
\\
\dim \tilde{\mathcal{F}}_{v,w}=\tilde{d}_{v,w}
&:=\dim \mathcal{F}_{v,w}+\sum_{\alpha\in I_0} v_\alpha w'_\alpha+\sum_{\beta\in I_1}v_\beta w_\beta\\
&=\sum_{\alpha\in I_0}(w_\alpha+w'_\alpha) v_\alpha +\sum_{\beta\in I_1} (w_\beta+w'_\beta) v_\beta -\sum_{i\in I}v_i^2+\sum_{h\in\Omega} v_{s(h)}v_{t(h)}\\
\dim \mathbf{E}_{v,w}
&=\dim \mathbf{E}_{\bar{v},w}
=\tilde{d}_{\bar{v},w}\\  
\end{flalign*}

{\rm(c)} Further assume that $(v,w)$ is $l$-dominant. Then $\pi$ is birational. In this case, it restricts to an isomorphism $\pi^{-1}({\bf E}^\circ_{v,w})\stackrel{\cong}{\longrightarrow} {\bf E}^\circ_{v,w}$. 

\end{proposition}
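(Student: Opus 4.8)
The plan is to build everything from the stratification in Lemma \ref{lem:ab}(a) together with the dimension count coming from the vector bundle structure in Lemma \ref{lem:ab}(b), and then obtain irreducibility, equidimensionality, and (when $(v,w)$ is $l$-dominant) birationality of $\pi$ by comparing dimensions of strata.

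\textbf{Part (a).} First I would observe that $\mathbf{E}_{v,w}=\mathbf{E}_{\bar v,w}$ is immediate from the definitions: since $\bar v\le v$, the condition $\mathrm{rank}\,A_i\le v_i$ is implied by $\mathrm{rank}\,A_i\le\bar v_i$, giving one inclusion; conversely, for any point of $\mathbf{E}_{v,w}$ one checks that $\mathrm{rank}\,A_i\le\bar v_i$ for all $i$, using that $\bar v$ is obtained exactly by the explicit formula \eqref{eq:barv} which encodes the propagation of rank constraints through the arrows (for $\alpha\in I_0$ the image of $A_\alpha$ sits inside $W_\alpha$ of dimension $w_\alpha$; for $\beta\in I_1$ the image of $A_\beta$ sits inside $W_\beta'\oplus\bigoplus_{h:s(h)=\beta}\im A_{t(h)}$, whose dimension is at most $w'_\beta+\sum\min(v_{t(h)},w_{t(h)})$, and one iterates). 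Next, by Lemma \ref{lemma:vbar}, $\bar v\in\D(w)$, so Lemma \ref{lem:ab}(a) gives $\mathbf{E}^\circ_{\bar v,w}\neq\emptyset$, and it is irreducible, locally closed, of dimension equal to that of the affine space $\tilde{\mathbb A}$, namely $\sum_{i\in I}\bar v_iw_i+\sum_{h\in\Omega}\bar v_{t(h)}\bar v_{s(h)}$. To see $\mathbf{E}^\circ_{\bar v,w}$ is the \emph{largest} stratum among all $\myE^\circ_{v',w}$ with $v'\le\bar v$ and $(v',w)$ $l$-dominant, I would compute $\dim\myE^\circ_{v',w}=\sum_i v'_iw_i+\sum_h v'_{t(h)}v'_{s(h)}$ from Lemma \ref{lem:ab}(b) (equivalently via $\tilde d_{v',w}-\dim\pi^{-1}(\text{fiber})$), and show the difference $\dim\mathbf{E}^\circ_{\bar v,w}-\dim\myE^\circ_{v',w}$ is strictly positive for $v'\lneq\bar v$. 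This is the one genuinely computational point: the quadratic term $\sum_h v'_{t(h)}v'_{s(h)}$ is not monotone in $v'$, so the inequality must be extracted using the $l$-dominance bounds $w'_i\ge v'_i-\sum_{h:i-j}v'_j$, which control how much the quadratic term can grow relative to the linear term $\sum_i v'_iw_i$. Granting that, $\mathbf{E}_{v,w}=\bigcup_{v'}\overline{\myE^\circ_{v',w}}$ with one dominant irreducible piece forces $\mathbf{E}_{v,w}=\overline{\mathbf{E}^\circ_{\bar v,w}}$, hence irreducible.

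\textbf{Part (b).} Assuming \eqref{nonempty F}, Lemma \ref{EF} gives $\mathcal{F}_{v,w}\neq\emptyset$ and $\pi':\tilde{\mathcal F}_{v,w}\to\mathcal{F}_{v,w}$ a vector bundle of the stated rank; so it suffices to prove $\mathcal{F}_{v,w}$ is irreducible and nonsingular. For this I would use the last sentence of Lemma \ref{lem:ab}(b): $\mathcal{F}_{v,w}$ is an iterated Grassmannian bundle — a $\prod_{\beta\in I_1}Gr(v_\beta,w'_\beta+\sum_{h:s(h)=\beta}v_{t(h)})$-bundle over the smooth irreducible base $\prod_{\alpha\in I_0}Gr(v_\alpha,w_\alpha)$ — and a Grassmannian bundle (Zariski locally trivial, with smooth irreducible fibers) over a smooth irreducible base is smooth and irreducible. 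Its dimension is then the sum of the base dimension $\sum_{\alpha\in I_0}(w_\alpha-v_\alpha)v_\alpha$ and the fiber dimension $\sum_{\beta\in I_1}(w'_\beta+\sum_{h:s(h)=\beta}v_{t(h)}-v_\beta)v_\beta$, which is the displayed $d_{v,w}$; the second form of $d_{v,w}$ follows by expanding $\sum_h v_{s(h)}v_{t(h)}=\sum_{\beta\in I_1}\sum_{h:s(h)=\beta}v_\beta v_{t(h)}$ and $\sum_i v_i^2=\sum_{I_0}v_\alpha^2+\sum_{I_1}v_\beta^2$. The formula for $\dim\tilde{\mathcal F}_{v,w}$ is then $\dim\mathcal{F}_{v,w}+\mathrm{rank}\,\pi'$ by Lemma \ref{EF}(b), and expanding gives the stated expression. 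Finally $\dim\mathbf{E}_{v,w}=\dim\mathbf{E}_{\bar v,w}$ by part (a), and I claim $\dim\mathbf{E}_{\bar v,w}=\dim\mathbf{E}^\circ_{\bar v,w}=\tilde d_{\bar v,w}$: the first equality is part (a) (closure has the same dimension), and the second is the computation $\dim\mathbf{E}^\circ_{\bar v,w}=\sum_i\bar v_iw_i+\sum_h\bar v_{t(h)}\bar v_{s(h)}$, which I would check equals $\tilde d_{\bar v,w}$ precisely because $(\bar v,w)$ is $l$-dominant — indeed when $(v,w)$ is $l$-dominant the fiber $\pi^{-1}(0)\cong\mathcal{F}_{v,w}$ has dimension $d_{v,w}$ and the generic fiber over $\mathbf{E}^\circ_{v,w}$ is a point (this is exactly part (c)), so $\tilde d_{v,w}=d_{v,w}+\dim\mathbf{E}^\circ_{v,w}$ unwinds to $\dim\mathbf{E}^\circ_{v,w}=\tilde d_{v,w}-d_{v,w}=\sum_i v_iw_i+\sum_h v_{t(h)}v_{s(h)}$; apply this with $v$ replaced by $\bar v$. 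For surjectivity of $\pi$: given a point $(x_i,y_h)\in\mathbf{E}_{v,w}$, it lies in some stratum $\myE^\circ_{v',w}$ with $v'\le\bar v\le v$, and setting $X_i:=\im A_i$ (of dimension $v'_i\le v_i$, padded to dimension $v_i$ respecting the nesting conditions using \eqref{nonempty F}) produces a point of $\tilde{\mathcal F}_{v,w}$ over it; padding is possible exactly because \eqref{nonempty F} guarantees enough room. Concretely, one builds $(X_i)$ inductively along a topological order on $I$ (sources first or sinks first), choosing $X_\beta$ inside $W'_\beta\oplus\bigoplus_{h:s(h)=\beta}X_{t(h)}$ — a space of dimension $w'_\beta+\sum v_{t(h)}\ge v_\beta$ — so the fiber $\pi^{-1}(x_i,y_h)$ is a nonempty (iterated Grassmannian) variety.

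\textbf{Part (c).} Now assume $(v,w)$ is $l$-dominant, so $\bar v=v$ by Lemma \ref{lemma:vbar} and $\myE^\circ_{v,w}$ is the dense open stratum of $\mathbf{E}_{v,w}$ by part (a). Over $\myE^\circ_{v,w}$, for a representation with $\mathrm{rank}\,A_i=v_i$ for all $i$, the subspace $X_i$ in a point of $\tilde{\mathcal F}_{v,w}$ must contain $\im A_i$, which already has dimension $v_i=\dim X_i$; hence $X_i=\im A_i$ is forced, so the fiber of $\pi$ over any point of $\myE^\circ_{v,w}$ is a single (reduced) point, and in fact $\pi^{-1}(\myE^\circ_{v,w})\to\myE^\circ_{v,w}$ is an isomorphism (its inverse is the morphism $(x_i,y_h)\mapsto(x_i,y_h,\im A_i)$, which is well-defined and regular on $\myE^\circ_{v,w}$ since the $\im A_i$ have locally constant rank there, so vary algebraically). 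Since $\myE^\circ_{v,w}$ is dense in the irreducible variety $\mathbf{E}_{v,w}$ and $\tilde{\mathcal F}_{v,w}$ is irreducible with $\pi$ surjective, an isomorphism over a dense open set makes $\pi$ birational.

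\textbf{Main obstacle.} The one step that is not formal is the dimension inequality $\dim\mathbf{E}^\circ_{\bar v,w}>\dim\mathbf{E}^\circ_{v',w}$ for $v'\lneq\bar v$ in part (a) — equivalently that $v\mapsto\sum_i v_iw_i+\sum_{h\in\Omega}v_{t(h)}v_{s(h)}$ is strictly increasing on $\D(w)$ in the coordinatewise order. Because the quiver-interaction term is a genuine (indefinite) quadratic form, this must be argued carefully using the defining $l$-dominance inequalities $w'_i\ge v_i-\sum_{h:i-j}v_j$; everything else in the proposition reduces to bookkeeping with Lemmas \ref{EF}, \ref{lemma:vbar}, \ref{lem:ab} and standard facts about Grassmannian bundles.
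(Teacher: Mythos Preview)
Your treatment of parts (b) and (c) is essentially correct and follows the natural line. The genuine problem is in part (a): the implication ``one stratum has strictly maximal dimension, therefore $\mathbf{E}_{v,w}=\overline{\mathbf{E}^\circ_{\bar v,w}}$'' is invalid. A finite union of irreducible locally closed pieces in which one piece has strictly largest dimension can perfectly well be reducible---nothing prevents a lower-dimensional stratum from lying outside the closure of the big one (think of a plane together with a disjoint point). Thus the ``main obstacle'' you isolate---strict monotonicity of the dimension function on $\D(w)$---even if proved, does not close the argument; you would still owe a proof that each smaller stratum is contained in $\overline{\mathbf{E}^\circ_{\bar v,w}}$, which is precisely the irreducibility statement you are trying to establish.

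The clean fix (and the way the argument in \cite{L} runs) is to reverse the logical order and feed the parts of (b) that do not depend on (a) back into (a). Since $(\bar v,w)$ is $l$-dominant it satisfies \eqref{nonempty F}; hence, by your own argument for (b), $\tilde{\mathcal F}_{\bar v,w}$ is irreducible and $\pi:\tilde{\mathcal F}_{\bar v,w}\to\mathbf E_{\bar v,w}$ is surjective. This $\pi$ is also proper, being the restriction to a closed subvariety of the projection $\mathbf E_w\times\mathcal F_{\bar v,w}\to\mathbf E_w$ off a projective factor. Therefore $\mathbf E_{\bar v,w}$ is the proper image of an irreducible variety, hence irreducible. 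Now $\mathbf E^\circ_{\bar v,w}$ is nonempty (Lemma~\ref{lem:ab}(a)) and open in $\mathbf E_{\bar v,w}$ (the conditions $\rank A_i\ge\bar v_i$ are open), so it is automatically dense; that is the content of ``largest stratum''. The dimension monotonicity computation is then unnecessary, and $\dim\mathbf E_{v,w}=\dim\tilde{\mathcal F}_{\bar v,w}=\tilde d_{\bar v,w}$ drops out from (c) applied at $\bar v$.

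A small side remark: the expression $\dim\mathbf E^\circ_{v',w}=\sum_i v'_iw_i+\sum_h v'_{t(h)}v'_{s(h)}$ you use is off by $\sum_i\big(w'_iv'_i-(v'_i)^2\big)$; the correct value is $\tilde d_{v',w}$, as one sees either by directly counting the dimension of the chart $\tilde{\mathbb A}$ in the proof of Lemma~\ref{lem:ab}(a), or from your own observation in (c) that the generic fiber of $\pi$ over $\mathbf E^\circ_{v',w}$ (with $v'$ in place of $v$) is a point.
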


\begin{proof}
Similar to the proof of \cite[Proposition 4.8]{L}.
\end{proof}

\section{Decomposition theorem}

The following fact is proved by Nakajima in \cite[Theorem 14.3.2]{Nakajima_JAMS}  using the representation theory of quantum affine algebras. We will give a proof similar to  \cite[Theorem 5.1]{L}; instead of representation theory, we use the algebraic version of the Transversal Slice Theorem.

\begin{theorem}\label{thm:trivial local system}
Assume ${\mytildeF_{v,w}}\neq\emptyset$. The local system appeared in the BBDG decomposition for  $\pi_*IC_{\mytildeF_{v,w}}$ are all trivial. Thus,
\begin{equation}\label{piICtildeF}
\pi_*(IC_{\mytildeF_{v,w}})=\bigoplus_{v'}\bigoplus_{d}a^d_{v,v';w}IC_{\mathbf{E}_{v',w}}[d]
\end{equation}
where $v'\le v$ satisfies the condition that $(v',w)$ is $l$-dominant, and $a^d_{v,v';w}\in\mathbb{Z}_{\ge0}$. 
\end{theorem}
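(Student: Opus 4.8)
The plan is to follow the strategy of \cite[Theorem 5.1]{L}, replacing the representation-theoretic input with the Algebraic Transversal Slice Theorem that will be proved in this section. First I would set up the BBDG decomposition: since $\pi:\mytildeF_{v,w}\to\myE_{v,w}$ is proper and $\mytildeF_{v,w}$ is nonsingular irreducible (by Proposition \ref{prop:fiber}(b)), the decomposition theorem of \cite{BBD} gives
$$\pi_*(IC_{\mytildeF_{v,w}})=\bigoplus_{v'}\bigoplus_{d}IC_{\mathbf{E}_{v',w}}(L^d_{v,v';w})[d]$$
for some (semisimple) local systems $L^d_{v,v';w}$ on open dense subsets of the strata $\myE^\circ_{v',w}$, where $v'$ ranges over those vectors with $\myE^\circ_{v',w}\neq\emptyset$, i.e.\ by Lemma \ref{lem:ab}(a) exactly those $v'\le \bar v$ with $(v',w)$ $l$-dominant. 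Here I use that $\myE_{v,w}=\myE_{\bar v,w}=\overline{\myE^\circ_{\bar v,w}}$ is stratified by the $\myE^\circ_{v',w}$. The content of the theorem is that each $L^d_{v,v';w}$ is a trivial local system, so that $IC_{\myE_{v',w}}(L^d_{v,v';w})$ is a direct sum of $a^d_{v,v';w}$ copies of $IC_{\myE_{v',w}}$, with $a^d_{v,v';w}\in\mathbb{Z}_{\ge0}$.

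To prove triviality, I would argue stratum by stratum. Fix a stratum $\myE^\circ_{v',w}$ and a point $p\in\myE^\circ_{v',w}$; I need to show the monodromy of $L^d_{v,v';w}$ around $p$ is trivial, equivalently that $\pi_1$ of (an open dense subset of) $\myE^\circ_{v',w}$ acts trivially on the relevant stalk cohomology of the fiber. The key geometric input is the Algebraic Transversal Slice Theorem of \S4: there is a transversal slice $S$ to $\myE^\circ_{v',w}$ at $p$ such that, \'etale-locally near $p$, the pair $(\myE_{v,w},\pi)$ looks like the product of $\myE^\circ_{v',w}$ with the pair $(S,\pi|_{\pi^{-1}(S)})$, and moreover this slice is itself isomorphic to another affine graded quiver variety $\myE_{v'',w''}$ for suitable smaller dimension vectors, with $p$ mapping to the origin. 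Because the transversal slice model is a \emph{product} over the (irreducible, hence connected) stratum, the local system on $\myE^\circ_{v',w}$ obtained by restricting $\mathcal{H}^\bullet(\pi_*IC)$ is pulled back from a point, hence constant; combined with semisimplicity this forces the monodromy to be trivial. The origin $0\in\myE_{v'',w''}$ is fixed by the natural $\mathbb{C}^*$-scaling action on the quiver data, which contracts $\myE_{v'',w''}$ to $0$, so there is no room for nontrivial local monodromy — this is exactly the mechanism that makes the slice argument go through, and it is the algebraic replacement for the representation-theoretic statement of Nakajima.

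Concretely the steps are: (1) invoke properness of $\pi$ and nonsingularity/irreducibility of $\mytildeF_{v,w}$ to write down the abstract BBDG decomposition; (2) identify the strata of the target as the $\myE^\circ_{v',w}$ with $(v',w)$ $l$-dominant and $v'\le\bar v$, using Lemma \ref{lem:ab}(a) and Proposition \ref{prop:fiber}(a); (3) for each stratum, apply the Algebraic Transversal Slice Theorem to realize a neighborhood of a point as (stratum)$\times$(a smaller $\myE$) with the map a product; (4) use the $\mathbb{C}^*$-contraction of the slice $\myE_{v'',w''}$ onto its cone point to conclude that the local system is constant along the stratum, hence (being a summand of a semisimple perverse sheaf) trivial; (5) rewrite $IC_{\myE_{v',w}}(L)[d]$ as $a^d_{v,v';w}\,IC_{\myE_{v',w}}[d]$ with multiplicities $a^d_{v,v';w}=\dim L^d_{v,v';w}\ge 0$, obtaining \eqref{piICtildeF}. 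I expect the main obstacle to be step (3)--(4): verifying that the algebraic transversal slice really is (isomorphic to) an affine graded quiver variety of the same type with the map compatible with the product structure, and that the slices can be chosen to vary nicely so that the product decomposition holds over a Zariski- (or \'etale-) open dense subset of the whole stratum rather than merely formally-locally at one point. This is precisely where the computations become "notably more intricate" in the bipartite setting, since the block-matrix descriptions of $A_\alpha$ and $A_\beta$ in \eqref{AB} and the slice construction must be handled separately for sinks and sources.
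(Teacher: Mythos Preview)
Your outline matches the paper's strategy closely: set up the BBDG decomposition, stratify the target by the $\myE^\circ_{v',w}$, and use the Algebraic Transversal Slice Theorem at a point $p$ of a given stratum to trivialize the local system. Two points deserve correction.

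First, the $\mathbb{C}^*$-contraction you invoke in step~(4) is not the mechanism, and the paper does not use it. What actually does the work is purely the product structure together with a short lemma (Lemma~\ref{lem:VXX}): if $f:X\to Y$ has BBDG summands $IC_{\overline{Y_a}}(L_a)[n_a]$, then for any nonsingular $V$ the map $1\times f:V\times X\to V\times Y$ has summands $IC_{V\times\overline{Y_a}}(p^*L_a)[n_a]$, i.e.\ the local systems are pulled back from the $Y$-factor. Once the transversal slice gives $U\cong U^0\times U^\perp$ with $\pi$ a product, a summand $IC_Z(L)[n]$ restricted to $U$ must be of the form $IC_{U^0\times\overline{Y_a}}(p^*L_a)[n_a]$. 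Since the generic point of $Z$ lies in $\myE^\circ_{v^0,w}$, one has $Z\subseteq\myE_{v^0,w}$, whence $Z\cap U\subseteq U^0\times\{0\}$; this forces $\overline{Y_a}=\{0\}$, so $L_a$ lives on a single point and its pullback $p^*L_a$ is trivial on $U^0$. No contracting action is needed---the triviality comes from the support in the slice being a point. Your earlier sentence ``pulled back from a point, hence constant'' is exactly right; the subsequent appeal to $\mathbb{C}^*$ is a detour.

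Second, you write ``\'etale-locally'' and later hedge with ``Zariski- (or \'etale-) open''. The paper insists on a \emph{Zariski} open neighborhood $U^0\subseteq\myE^\circ_{v',w}$, and this is essential (the paper flags this explicitly). The reason is the passage from ``$L$ trivial on $U^0$'' to ``$L$ trivial on the whole stratum'': for a Zariski open dense $U^0$ the complement has real codimension $\ge2$, so $\pi_1(U^0)\twoheadrightarrow\pi_1(\myE^\circ_{v',w})$ and trivial monodromy on $U^0$ forces trivial monodromy globally. An analytic or \'etale neighborhood would not give this surjection. So your anticipated obstacle in step~(3)--(4), getting the product decomposition over a genuine Zariski open, is precisely the content of the algebraic transversal slice theorem that the paper proves, and it is indeed where the block-matrix computations become intricate.
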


The proof is delayed to \S\ref{subsection:trivial local system}. First, recall: 
\begin{theorem}[BBDG Decomposition Theorem]\cite{BBD}\label{BBDthm}
Let $f:X\to Y$ be a proper algebraic morphism between complex algebraic varieties. 
Then there is a finite list of triples $(Y_a,L_a,n_a)$, where for each $a$, $Y_a$ is locally closed smooth irreducible algebraic subvariety of $Y$, $L_a$ is a semisimple local system on $Y_a$, $n_a$ is an integer,  such that:
\begin{equation}\label{eq:decomp thm}
f_* IC_X \cong \bigoplus_{a} IC_{\overline{Y_a}}(L_a)[n_a]
\end{equation}
Moreover, even though the isomorphism is not necessarily canonical, the direct summands appeared on the right hand side are canonical. 
\end{theorem}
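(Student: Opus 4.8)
\textbf{Proof strategy for Theorem \ref{thm:trivial local system}.}
The plan is to apply the BBDG Decomposition Theorem (Theorem \ref{BBDthm}) to the proper map $\pi:\tilde{\mathcal{F}}_{v,w}\to\mathbf{E}_{v,w}$ of \eqref{pi}, and then upgrade the abstract decomposition \eqref{eq:decomp thm} to the explicit form \eqref{piICtildeF} by (i) identifying the relevant strata, and (ii) proving that every local system $L_a$ occurring is trivial. For (i): by Proposition \ref{prop:fiber}(b), $\tilde{\mathcal{F}}_{v,w}$ is irreducible and nonsingular, so $IC_{\mytildeF_{v,w}}$ is (a shift of) the constant sheaf. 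By Lemma \ref{lem:ab}(b), over each nonempty stratum $\mathbf{E}^\circ_{v',w}$ (with $v'\le v$ and $(v',w)$ $l$-dominant, by Lemma \ref{lem:ab}(a)) the map $\pi$ is a Zariski locally trivial fiber bundle with smooth irreducible fiber $\mathcal{M}$; since the strata are the $\mathbf{E}^\circ_{v',w}$ and $\mathbf{E}_{v,w}=\mathbf{E}_{\bar v,w}=\overline{\mathbf{E}^\circ_{\bar v,w}}$ (Proposition \ref{prop:fiber}(a)), and each $\mathbf{E}_{v',w}=\overline{\mathbf{E}^\circ_{v',w}}$, the supports $\overline{Y_a}$ appearing in \eqref{eq:decomp thm} are among the $\mathbf{E}_{v',w}$. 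The nonnegativity of the multiplicities $a^d_{v,v';w}$ is automatic.

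The heart of the argument is (ii): showing each local system $L_a$ on the open dense stratum $\mathbf{E}^\circ_{v',w}$ is trivial, equivalently has trivial monodromy, equivalently (since it is a summand of a direct image and hence a direct summand of $R^k\pi_*\underline{\mathbb{Q}}$ restricted to the stratum) that $\pi_1(\mathbf{E}^\circ_{v',w})$ acts trivially on the stalk cohomology. I would handle this via the Algebraic Transversal Slice Theorem proved in \S4: near a point of $\mathbf{E}^\circ_{v',w}$, the pair $(\mathbf{E}_{v,w},\mathbf{E}^\circ_{v',w})$ is, \'etale-locally, a product of the stratum with a transversal slice, and on this slice $\pi$ restricts to a model map of the same type (a Nakajima quiver variety map for a smaller dimension vector) whose central fiber governs the local monodromy; combining this with the fact that $\mathbf{E}^\circ_{v',w}$ is rational and, more to the point, that the bundle structure of Lemma \ref{lem:ab}(b) exhibits $\pi^{-1}(\mathbf{E}^\circ_{v',w})\to\mathbf{E}^\circ_{v',w}$ as \emph{globally} a Zariski-locally-trivial $\mathcal{M}$-bundle pulled back from the bundle $q:\mathcal{X}\to\mathcal{F}_{v',w}$ over the (simply connected) product of Grassmannians, one sees the monodromy is trivial. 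Concretely, the transversal slice reduces the computation of each $L_a$ to the corresponding local system for the model map, and an induction on $\dim v$ (or on $|v|$), with the base case being the trivial-stratum case where $\pi$ is an isomorphism (Proposition \ref{prop:fiber}(c)), closes the loop. This mirrors \cite[Theorem 5.1]{L}, but the bipartite bookkeeping (two families of Grassmannians, the maps $A_\alpha$, $A_\beta$ of Definition \ref{def:Ai}) makes the slice computation more involved.

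I expect the main obstacle to be the precise verification that the transversal slice to $\mathbf{E}^\circ_{v',w}$ inside $\mathbf{E}_{v,w}$ is again (\'etale-locally) a Nakajima affine graded quiver variety for reduced data, with $\pi$ restricting to the corresponding $\tilde{\mathcal{F}}\to\mathbf{E}$ map — this is exactly the content of the Algebraic Transversal Slice Theorem of \S4 and requires identifying the normal directions using the block-matrix descriptions in \eqref{AB} and the local model $\varphi:\tilde{\mathbb{A}}\to\mathbf{E}_w$ from the proof of Lemma \ref{lem:ab}(a). Once the slice is identified, triviality of the local system follows because the slice map's fibers are smooth and connected and, after the inductive reduction, the ambient stratum is simply connected; the decomposition \eqref{piICtildeF} then follows by collecting terms supported on each $\mathbf{E}_{v',w}=\overline{\mathbf{E}^\circ_{v',w}}$ and shifting.
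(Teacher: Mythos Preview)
First, a clarification: Theorem~\ref{BBDthm} is the classical BBDG Decomposition Theorem, quoted from \cite{BBD} without proof in the paper; your proposal is really a strategy for Theorem~\ref{thm:trivial local system}, which \emph{is} proved in \S\ref{subsection:trivial local system}. I compare against that.

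Your outline has the right ingredients---BBDG applied to $\pi$, identification of strata via Lemma~\ref{lem:ab}, and the Algebraic Transversal Slice (Lemma~\ref{lem:transversal slice})---but you assemble them more circuitously than the paper. The paper's argument is a one-shot reduction, not an induction: given a summand $IC_Z(L)[n]$ whose support $Z$ has generic point in the stratum $\mathbf{E}^\circ_{v^0,w}$, the slice yields $U\cong U^0\times U^\perp$ with $\pi|_{\pi^{-1}U}\cong 1\times\pi^\perp$; then Lemma~\ref{lem:VXX} forces the restricted summand to be $IC_{U^0\times\overline{Y_a}}(p^*L_a)[n_a]$ for some $Y_a\subseteq U^\perp$. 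Since $Z\subseteq\mathbf{E}_{v^0,w}$ gives $Z\cap U\subseteq U^0\times\{0\}$, one gets $Y_a=\{0\}$, a single point, so $L_a$ and hence $L|_{U^0}$ are trivial and $Z=\mathbf{E}_{v^0,w}$. No induction on $|v|$ is needed; the ``base case'' you cite (Proposition~\ref{prop:fiber}(c)) plays no role here.

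Your parallel thread---triviality of monodromy because $\pi^{-1}(\mathbf{E}^\circ_{v',w})\to\mathbf{E}^\circ_{v',w}$ is pulled back, via Lemma~\ref{lem:ab}(b), from a bundle over the simply connected $\mathcal{F}_{v',w}$---is a genuinely different route that the paper does not take. It can be made to work (a direct summand of a constant local system over $\mathbb{Q}$ is constant), but it requires checking simple connectivity of the iterated Grassmannian bundle $\mathcal{F}_{v',w}$ and arguing that $L_a$ embeds as a summand of some $R^k\pi_*\underline{\mathbb{Q}}|_{\mathbf{E}^\circ_{v',w}}$ after accounting for contributions from summands with strictly larger support. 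The paper's route via Lemma~\ref{lem:VXX} bypasses all of this by reducing to a local system on a point.
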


\subsection{Algebraic Transversal Slice Theorem}
Nakajima proves an analytic transversal slice theorem \cite[\S3]{Nakajima_JAMS} and we strengthen the result to an algebraic version for the varieties studied in this paper using an explicit construction similar to the one in \cite{L}. 

\begin{lemma}[Algebraic Transversal Slice Theorem]\label{lem:transversal slice}
Let $p\in \myE_{v,w}$ be a point in the stratum $\myE^\circ_{v^0,w}$. So $v^0\le v$. Define 
$$w^\perp=w-C_qv^0=(w^\perp_i, {w^\perp_i}')_{i \in I},
\quad
v^\perp=v-v^0=(v^\perp_i)_{i\in I},$$ 
that is, 
$w^\perp_i=w_i-v_i^0$,
${w^\perp_i}'=w'_i-v_i^0+\sum_{h:j-i} v_j^0$, 
$v^\perp_i=v_i-v_i^0$.
Take $U^\perp=\myE_{v^\perp,w^\perp}$.
Then there exist Zariski open neighborhoods $U\subseteq\myE_{v,w}$ of $p$, $U^0\subseteq \myE^\circ_{v^0,w}$ of $p$, 
 and isomorphisms $\varphi,\psi$ making the following diagram commute: 
$$\xymatrix{\mytildeF_{v,w} \ar@{}[r]|{\supseteq} &\pi^{-1}U\ar[d]^\pi\ar[r]^-\varphi_-\cong&U^0\times\pi^{-1}(U^\perp)\ar[d]^{1\times\pi}\ar@{}[r]|{\subseteq}&\myE^\circ_{v^0,w}\times \mytildeF_{v^\perp,w^\perp}\\
\mathbf{E}_{v,w}\ar@{}[r]|{\supseteq}&U\ar[r]^-\psi_-\cong & U^0\times U^\perp\ar@{}[r]|{\subseteq} & \myE^\circ_{v^0,w}\times \mathbf{E}_{v^\perp,w^\perp}}$$
We can further assume $U$ and $U^0$ are both invariant under the natural $\mathbb{C}^*$-action (by multiplying all entries with the same scalar). 

Moreover, $\psi(p)=(p,0)$, and the diagram is compatible with the stratifications in the sense that 
$\psi({\bf E}^\circ_{u,w}\cap U)=U_0\times {\bf E}^\circ_{u^\perp,w^\perp}$ 
for each $u$ satisfying $v^0\le  u\le v$, where $u^\perp=u-v^0$. 
\end{lemma}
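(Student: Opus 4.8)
The plan is to adapt the explicit construction behind the Algebraic Transversal Slice Theorem of \cite{L} to the bipartite setting, building the isomorphism $\psi$ (and its lift $\varphi$) by hand near a chosen point $p$ and then checking it is equivariant and stratum-preserving. The starting observation is that if $p\in\myE^\circ_{v^0,w}$, then by definition the maps $A_i(p)$ have rank exactly $v^0_i$ for all $i\in I$; choosing bases of the $W_i$, $W'_i$ adapted to the images and kernels of the $A_i(p)$, one can put every nearby point $(x_i,y_h)$ into a normal form. Concretely, after shrinking to a Zariski neighborhood $U$ of $p$ on which certain minors of the $A_i$ stay nonzero, the $A_i$-images split off a fixed $v^0_i$-dimensional piece, and the residual data — the behavior of $(x_i,y_h)$ transverse to that fixed piece — is precisely a representation of the same shape but with dimension vectors $v^\perp=v-v^0$ and $w^\perp=w-C_qv^0$. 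This is exactly the numerology encoded in the definition of $w^\perp,v^\perp$ in the statement, and it is why $C_q$ appears: passing to the quotient by the $v^0$-dimensional subspaces shifts the "$W'$"-side dimensions by $\sum_{h:j-i}v_j^0$, which is the off-diagonal part of $C_qv^0$.

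The key steps, in order, would be: (1) fix $p$, diagonalize the $A_i(p)$ simultaneously on each vertex (legitimate since the $A_i$ at distinct vertices involve disjoint blocks up to the shared $y_h$, which one handles by ordering $I_0$ before $I_1$ as in Definition \ref{def:Ai}), and identify the complement subspaces that will carry the transverse data; (2) define $\psi$ on a neighborhood $U$ by sending a nearby representation to the pair (its $\myE^\circ_{v^0,w}$-component, obtained by a change of frame that restores the chosen normal form) $\times$ (the induced transverse representation in $\myE_{v^\perp,w^\perp}=U^\perp$); (3) verify $\psi$ is an isomorphism onto $U^0\times U^\perp$ by writing down the inverse explicitly, exactly as in the proof of \cite[Lemma 4.6]{L} where a morphism $\varphi$ and its inverse are matched on an open set; (4) lift $\psi$ to $\varphi$ on $\pi^{-1}U$ by tracking how the flags $(X_i)$ interact with the fixed $v^0$-dimensional images — since $\im A_i\subseteq X_i$ always contains the fixed piece, $X_i$ decomposes as (fixed piece) $\oplus$ ($X_i/$fixed piece), and the latter is a point of $\mytildeF_{v^\perp,w^\perp}$; (5) check the $\mathbb{C}^*$-equivariance: the normal form can be chosen so the change-of-frame matrices are built from the entries of $(x_i,y_h)$ in a scale-covariant way, so one may shrink $U,U^0$ to their $\mathbb{C}^*$-saturations; (6) check $\psi(p)=(p,0)$ (immediate, since at $p$ the transverse data vanishes) and the stratification compatibility $\psi(\myE^\circ_{u,w}\cap U)=U^0\times\myE^\circ_{u^\perp,w^\perp}$, which follows because the construction is designed so that $\rank A_i=u_i$ on the source corresponds to $\rank A_i^\perp=u_i-v^0_i=u^\perp_i$ on the target.

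I expect the main obstacle to be step (2)–(3): making the change of frame \emph{algebraic} and \emph{simultaneous across all vertices}. The $y_h$ maps couple the vertex-$s(h)$ and vertex-$t(h)$ data, so one cannot just normalize each $A_i$ independently; one must normalize in the bipartite order (sinks first, then sources, or vice versa) and verify that the frame changes chosen at sinks do not destroy the rank conditions being imposed at sources, and that the whole assignment is a morphism of varieties (not merely a set map) on a Zariski-open $U$. This is where "many adjustments are necessary and the proofs and computations become notably more intricate" compared to \cite{L}: in the rank-$2$ case of \cite{L} there is essentially one $A$-map to normalize, whereas here the combinatorics of $\Omega$ and the $I_0\sqcup I_1$ split must be carried through carefully. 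A secondary but routine point is confirming the dimension bookkeeping $w^\perp=w-C_qv^0$ gives a genuinely $l$-dominant-compatible reduction, i.e.\ that $\myE_{v^\perp,w^\perp}$ is the correct transverse slice and not merely the correct dimension; this follows from Proposition \ref{prop:fiber} applied to the reduced data.
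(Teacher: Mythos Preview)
Your outline is correct and follows essentially the same route as the paper: the paper also fixes invertible minors of the $A_i$ near $p$ to define $U,U^0$, extracts $q^0$ and $q^\perp$ from $q$ via explicit block-matrix manipulations (your ``change of frame'' is realized by matrices $\widetilde P_\alpha,\widetilde P_\beta$ acting on rows/columns of the $A_i$), builds $\psi^{-1}$ by hand, and lifts to $\varphi$ by splitting each $X_i$ into the image of $A_i(q^0)$ plus a complementary piece $X_i^\perp$. The only caveat is that what you call the ``fixed piece'' is not literally fixed but varies algebraically with $q$ (it is the column space of $A_i(q^0)$), and the paper handles the coupling of $y_h$ across sinks and sources not by an ordering but by conjugating each $y_h$ simultaneously by $\widetilde P_{t(h)}$ and $\widetilde P_{s(h)}$---these are the details your step~(2)--(3) would have to supply, but the strategy is the same.
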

\begin{proof}
(1) We will define $\psi$. 
%
Recall that the point $p$ determines $n$ matrices as in \eqref{AB}, denoted $A_i(p)$ ($i\in I$). Since $p\in \myE^\circ_{v^0,w}$, without loss of generality, we can assume that, 
for $i\in I_0$, the first $v_i^0$ row vectors of $A_i(p)$  are linearly independent;
and for  $i \in I_1$, the first $v_i^0$ column vectors of $A_i(p)$ are linearly independent.   
Then for $\alpha\in I_0$, $\beta\in I_1$, there exist matrices with the indicated sizes:
\begin{center}
 \begin{tabular}{|c|c|c|c|c|} 
 \hline
 $P_\alpha(p)$ & $P_\beta(p)$ & $Z_\alpha(p)$ & $Z_\beta(p)$ & $C_h(p)$ \\
 \hline
 $(w_\alpha-v^0_\alpha)\times v^0_\alpha$ & $v^0_\beta\times (w_\beta-v^0_\beta)$ & $v^0_\alpha\times w_\alpha'$ & $w_\beta'\times v^0_\beta$ & 
$v^0_{t(h)}\times v^0_{s(h)}$\\
 \hline
\end{tabular}
\end{center}
such that (where for simplicity we drop the parameter $p$ for $Z_\alpha$, $C_{r_1}$, etc.)
\begin{equation*}
\aligned
&A_\alpha(p)=\begin{bmatrix}Z_\alpha&C_{r_1}&C_{r_1}P_{\beta_{r_1}}&\cdots&C_{r_s}&C_{r_s}P_{\beta_{r_s}} \\ P_\alpha Z_\alpha &P_\alpha C_{r_1}&P_\alpha C_{r_1} P_{\beta_{r_1}}&\cdots&P_\alpha C_{r_s} &P_\alpha C_{r_s} P_{\beta_{r_s}}
\end{bmatrix} ,
 \\
 &A_\beta(p)
=\begin{bmatrix} 
Z_\beta & Z_\beta P_\beta \\ C_{r'_1}&C_{r'_1}P_\beta \\ P_{\alpha_{r'_1}}C_{r'_1}& P_{\alpha_{r'_1}}C_{r'_1}  P_\beta \\ \vdots &\vdots\\ C_{r'_t}& C_{r'_t} P_\beta \\ P_{\alpha_{r'_t}}C_{r'_t} & P_{\alpha_{r'_t}}C_{r'_t} P_\beta
\end{bmatrix} , 
\endaligned
\end{equation*}
and that the corresponding $A'_\alpha(p)$ of size $v^0_\alpha\times(w'_\alpha+\sum_{h:\ t(h)=\alpha} v^0_{s(h)})$ and $A'_\beta(p)$ of size $(w'_\beta+\sum_{h:\ s(h)=\beta} v^0_{t(h)}) \times v^0_\beta$, as defined in \eqref{eq:A'}, are full rank. 

\medskip
Step 1: define $U$ and $U^0$.  
For $\alpha\in I_0$, define $M_\alpha$ to be the index set of the columns of $A_\alpha(p)$ that correspond to the columns of $A'_\alpha(p)$. 
The submatrix $A'_\alpha(p) := A_\alpha(p)_{[1,\dots,v_\alpha^0;M_\alpha]}$ has rank $v_\alpha^0$, thus there exists a subset  $J_\alpha\subseteq M_\alpha$ of cardinality $v_\alpha^0$ such that $A_\alpha(p)_{[1,\dots,v_\alpha^0;J_\alpha]}$ is an invertible square matrix. 
Similarly, for $\beta\in I_1$, define $M_\beta$ to be the index set of the rows of $A_\beta(p)$ that correspond to the rows of $A'_\beta(p)$, and there exists $J_\beta\subseteq M_\beta$ of cardinality $v_\beta^0$ such that $A_\beta(p)_{[J_\beta; 1,\dots,v_\beta^0]}$ is an invertible square matrix. 
 Define an open subset $U$ (which depends on $J_\alpha$) of  $\myE_{v,w}$ as
 $$U := \big\{ q\in \myE_{v,w}\ \big| \ \textrm{ $A_\alpha(q)_{[1,\dots,v_\alpha^0;J_\alpha]}$ ($\forall \alpha\in I_0$) and  $A_\beta(q)_{[J_\beta; 1,\dots,v_\beta^0]}$ ($\forall \beta\in I_1$) are invertible} \big\}.$$ 
 Define $U^0 := U\cap  \myE_{v^0,w}$ which is an open subset of  $\myE^\circ_{v^0,w}$.
 So
 $$U^0=\big\{ q\in \myE_{v^0,w}\ \big| \ \textrm{ $A_\alpha(q)_{[1,\dots,v_\alpha^0;J_\alpha]}$ ($\forall \alpha\in I_0$) and  $A_\beta(q)_{[J_\beta; 1,\dots,v_\beta^0]}$ ($\forall \beta\in I_1$) are invertible} \big\}.$$ 
Obviously, both $U$ and $U^0$ are invariant under the natural $\mathbb{C}^*$-action. 
\medskip
 
Step 2: in the rest we assume $q\in {\bf E}^\circ_{u,w}\cap U$.  We construct $q^0$ as follows. 
Denote the following (where we drop the parameter $q$ of $Z_\alpha$, $Z'_\alpha$, etc.)
$$A_\alpha(q)=\begin{bmatrix}Z_\alpha&C_{r_1}&C_{r_1}'&\cdots&C_{r_s}&C_{r_s}'\\ Z'_\alpha&C_{r_1}''&C_{r_1}''' &\cdots&C_{r_s}''&C_{r_s}'''
\end{bmatrix},
\quad
A_\beta(q)
=\begin{bmatrix}Z_\beta& Z'_\beta\\ C_{r'_1}&C_{r'_1}' \\ C_{r'_1}''& C_{r'_1}''' \\ \vdots &\vdots\\ C_{r'_t}& C_{r'_t}' \\ C_{r'_t}'' & C_{r'_t}'''
\end{bmatrix} 
$$
where the sizes of the blocks are as follows: 
\begin{center}
 \begin{tabular}{|c|c|c|c|c|} 
 \hline
block & $Z_\alpha(q)$ & $Z_\alpha'(q)$ &$Z_\beta(q)$ &$Z_\beta'(q)$ \\
 \hline
size   & $v_\alpha^0\times w'_\alpha$ & $(w_\alpha-v_\alpha^0)\times w'_\alpha$ 
&$w'_\beta\times v_\beta^0$
&$w'_\beta\times(w_\beta-v_\beta^0)$
\\
 \hline
\end{tabular}
\vspace{10pt}

 \begin{tabular}{|c|c|c|c|c|} 
 \hline
block &  $C_j(q)$ &$C'_j(q)$ & $C''_j(q)$ & $C'''_j(q)$\\
 \hline
size 
& $v_{\alpha_j}^0\times v_{\beta_j}^0$ 
& $v_{\alpha_j}^0\times (w_{\beta_j}-v_{\beta_j}^0)$ 
& $(w'_{\alpha_j}-v_{\alpha_j}^0)\times v_{\beta_j}^0 $ 
&$(w'_{\alpha_j}-v_{\alpha_j}^0) \times (w_{\beta_j}-v_{\beta_j}^0)$\\
 \hline
\end{tabular}
\end{center}

Since $A_\alpha(q)_{[1,\dots,v_\alpha^0;J_\alpha]}$ ($\forall \alpha\in I_0$) and  $A_\beta(q)_{[J_\beta; 1,\dots,v_\beta^0]}$ ($\forall \beta\in I_1$) are invertible, there are unique $(w_\alpha-v_\alpha^0)\times v_\alpha^0$ -matrix $P_\alpha(q)$ ($\forall \alpha\in I_0$) and $v_\beta^0\times (w_\beta-v_\beta^0)$ -matrix $P_\beta(q)$  ($\forall \beta\in I_1$) such 
that (where we omit the parameter $q$ of $P_\alpha$, $P_\beta$)
\begin{equation}\label{AJBI}
A_\alpha(q)_{[-;J_\alpha]}
=\begin{bmatrix}
A_\alpha(q)_{[1,\dots,v_\alpha^0;J_\alpha]}\\P_\alpha A_\alpha(q)_{[1,\dots,v_\alpha^0;J_\alpha]}\end{bmatrix}, \ 
A_\beta(q)_{[J_\beta;-]}=\bigg[ A_\beta(q)_{[J_\beta;1,\dots,v_\beta^0]}\;\; A_\beta(q)_{[J_\beta;1,\dots,v_\beta^0]}P_\beta\bigg].
\end{equation}
Define $q^0=(x_i^0,y_h^0)_{i\in I,h\in\Omega}$  where
$$x_\alpha^0=\begin{bmatrix}Z_\alpha \\P_\alpha Z_\alpha\end{bmatrix}, 
x_\beta^0=\begin{bmatrix}Z_\beta&Z_\beta P_\beta\end{bmatrix},
y_{j}^0=\begin{bmatrix}C_{j}&C_j P_{\beta_j} \\ P_{\alpha_j} C_j &P_{\alpha_j}C_j P_{\beta_j}\end{bmatrix}$$
(where we omitted the parameter $q$). Then $q^0$ is in $U^0$. %

Note that the projection $U\to U^0$, $q\mapsto q^0$ is compatible with the $\mathbb{C}^*$-action. 
\medskip

Step 3:   we let $U^\perp=\myE^\circ_{v^\perp,w^\perp}$ and construct $q^\perp=(x_i^\perp,y_h^\perp)_{i\in I, h\in\Omega}$ as follows. Define  
$$
\widetilde{P}_\alpha := \begin{bmatrix}{\bf I}_{v_\alpha^0}&0\\ -P_\alpha&{\bf I}_{w_\alpha-v_\alpha^0}\end{bmatrix} (\forall \alpha\in I_0),\quad  
\widetilde{P}_\beta := \begin{bmatrix}{\bf I}_{v_\beta^0}&-P_\beta\\ 0&{\bf I}_{w_\beta-v_\beta^0}\end{bmatrix} (\forall \beta\in I_1) 
$$
and for $\alpha\in I_0$, define (where we omit the parameter $q$)
$$\widetilde{A}_\alpha(q) := [\widetilde{P}_\alpha x_\alpha,  \widetilde{P}_\alpha y_{r_1}\widetilde{P}_{\beta_{r_1}}, \cdots, \widetilde{P}_\alpha y_{r_s}\widetilde{P}_{\beta_{r_s}}]_{\rm hor},
$$
It is easy to see that $\rank \widetilde{A}_\alpha(q)=\rank A_\alpha(q)=u_\alpha$, and $\widetilde{A}_\alpha(q)_{[1,\dots,v_\alpha^0; J_\alpha]}= \begin{bmatrix}A_\alpha(q)_{[1,\dots,v_\alpha^0;J_\alpha]}\\0\end{bmatrix}$.

Note that the space spanned by the top $v_\alpha^0$ rows of $\widetilde{A}_\alpha(q)$ intersects only at the origin with the space spanned by the rest rows.
Since  $\rank\widetilde{A}_\alpha(q)=\rank A_\alpha(q)=u_\alpha \le v_\alpha$, and that 
$\rank \widetilde{A}_\alpha(q)_{[1,\dots,v_\alpha^0;-]}=v_\alpha^0$ (because 
$\widetilde{A}_\alpha(q)_{[1,\dots,v_\alpha^0;J_\alpha]}
=A_\alpha(q)_{[1,\dots,v_\alpha^0;J_\alpha]}$ is invertible),
we have
$$\rank \widetilde{A}_\alpha(q)_{[v_\alpha^0+1,\dots,w_\alpha; -]} = \rank \widetilde{A}_\alpha(q)-\rank \widetilde{A}_\alpha(q)_{[1,\dots,v_\alpha^0;-]}
=u_\alpha-v_\alpha^0=u_\alpha^\perp\le v_\alpha^\perp.$$
We want $A_\alpha(q)^\perp$ to be 
$\widetilde{A}_\alpha(q)_{[v_\alpha^0+1,\dots,w_\alpha; \{1,\dots,w_\alpha'+\sum_{h:\;  t(h)=\alpha } w_{s(h)}\}\setminus J_\alpha]}$ with an appropriate rearrangement of the columns; to be more precise (recall that $M_\alpha$ is defined in Step 1):
$$A_\alpha(q)^\perp=\Big[  \widetilde{A}_\alpha(q)_{[v_\alpha^0+1,\dots,w_\alpha; M_\alpha\setminus J_\alpha]} \ \vline \  P_\alpha C_{r_1}P_{\beta_{r_1}}-P_\alpha C'_{r_1}-C''_{r_1}P_{\beta_{r_1}}+C_{r_1}''' \ \vline \  \cdots \Big]$$
For this, we denote $J'_\alpha=\iota^{-1}(J_\alpha)$  where $\iota:\{1,\dots, |M_\alpha|\}\to M_\alpha$ is the order preserving bijection, and define 
$$\aligned
&x_\alpha^\perp := \text{the matrix obtained from $[-P_\alpha Z_\alpha+Z'_\alpha, -P_\alpha C_{r_1}+C_{r_1}'', \cdots, -P_\alpha C_{r_s}+C_{r_s}'']_{\rm hor}$ }\\
&\hspace{30pt} \text{by deleting  columns of indices in $J'_\alpha$ (which are zero columns)},\\ 
&y_{h}^\perp := P_\alpha C_h P_\beta -P_\alpha C_h'-C_h''P_\beta+C_h''' \quad \textrm{ for arrow } h:\beta\to \alpha. 
\endaligned
$$
Note that the column spaces of $A_\alpha(q)^\perp$ and $\widetilde{A}_\alpha(q)_{[v_\alpha^0+1,\dots,w_\alpha; -]}$ are the same, which implies
\begin{equation}\label{eq:rank Aqperp}
\rank A_\alpha(q)^\perp= u_\alpha^\perp
\end{equation}

Similarly, we define 
$x_\beta^\perp$ to be obtained from 
$[-Z_\beta P_\beta+Z'_\beta, -C_{r'_1}P_\beta+C'_{r'_1}, \cdots, -C_{r'_t}P_\beta+C'_{r'_t}]_{\rm vert}$ by deleting rows of indices in $J_\beta$ 
where  $J'_\beta=\iota^{-1}(J_\beta)$ (here $\iota:\{1,\dots, |M_\beta|\}\to M_\beta$ is the order preserving bijection). 
We have
\begin{equation}\label{eq:rank Bqperp}
\rank A_\beta(q)^\perp= u_\beta^\perp
\end{equation}

This complete the construction of $q^\perp$. 
Note that the projection $U\to U^\perp$, $q\mapsto q^\perp$ is also compatible with the $\mathbb{C}^*$-action.

Note that $\psi$ is a morphism because it can be expressed in terms of matrix additions, multiplications, and inverses, so can be expressed as rational functions. 

\medskip
\noindent (2) 
We show that $\psi$ is an isomorphism by constructing its inverse $\psi^{-1}$. Assume $q^0,q^\perp$ are given. 
From $q^0$ we can recover $Z_i, P_i,C_h$ by rational functions. 
From $q^\perp$ we can recover $Z_i', C_h', C_h'', C_h'''$ by rational functions. This uniquely determines $q$. It is routine to check that $q$ is well-defined,  that $\psi^{-1}$ is a morphism and that $\psi^{-1}$ is indeed the two-sided inverse of $\psi$. 

\medskip
\noindent (3) 
We define $\varphi$. Given $(q=(x_i,y_h),X_i)\in \pi^{-1}U$, define 
$\varphi(q,X_i) := (q^0,(q^\perp,X_i^\perp))$, where $q^0$ and $q^\perp$ are defined in (1), and $X_i^\perp$ are defined as follows:

We define $X_\alpha^\perp$  ($\alpha\in I_0$) as follows. 
For ${\bf u}\in X_\alpha$, define
$$\rho^0({\bf u}) := \begin{bmatrix}{\bf I} &0\\ P_\alpha &0\end{bmatrix} {\bf u},\quad
\rho^\perp({\bf u}) := {\bf u}-\rho^0({\bf u})= \begin{bmatrix}0 &0\\ -P_\alpha&{\bf I}\end{bmatrix} {\bf u}.$$ 
Then the top $v_\alpha^0$ entries of $\rho^\perp({\bf u})$ are 0. Define
$$X_\alpha^0 := \{\rho^0({\bf u}) \ | \ {\bf u}\in X_\alpha \}, \quad 
X_\alpha^\perp := \{\rho^\perp({\bf u}) \ | \ {\bf u}\in X_\alpha \}
=\Big\{\begin{bmatrix} 0\\-P_\alpha {\bf u}_1+{\bf u}_2\end{bmatrix}\; \Big| \; \begin{bmatrix}{\bf u}_1 \\{\bf u}_2 \end{bmatrix}\in X_\alpha \Big\}$$
Then $\dim X_\alpha^0=v_\alpha^0$, 
$\dim X_\alpha^\perp
= v_\alpha^\perp$. Note that $X_\alpha^\perp=X_\alpha\cap (0\oplus \mathbb{A}^{w_\alpha-v_\alpha^0})$ is the set of vectors in $X_\alpha$ with the first $v_\alpha^0$ entries being 0. 
\smallskip

We define $X_\beta^\perp$  ($\beta\in I_1$)  as follows. 
Let $r'_1,\dots,r'_t$ be defined as in Definition \ref{def:Ai}. 
Define 
$$
P^*_\beta := \begin{bmatrix}
{\bf I}&0&\cdots&0\\0&
\widetilde{P}_{\alpha_{r'_1}}&\cdots& 0\\ \vdots&\vdots&\ddots&\vdots\\0&0&\cdots&\widetilde{P}_{\alpha_{r'_t}}
\end{bmatrix},\quad 
\widetilde{X}_\beta := P^*_\beta X_\beta.$$
Denote $\gamma|_{J_\beta}\in\mathbb{A}^{v_\beta^0}$ to be the vector obtained from $\gamma$ by only keeping its  entries of indices in $J_\beta$, where $J_\beta$ is defined in (1) Step 1. Define 
$$
X_\beta^\perp :=
\big\{\gamma 
\in \widetilde{X}_\beta
\ {\textrm {such that} } \
\gamma|_{J_\beta}={\bf 0}\big\}. 
$$

We now show that $\dim X_\beta^\perp=v_\beta^\perp$. Note that $\dim \widetilde{X}_\beta=\dim X_\beta=v_\beta$. 
Note that 
$$
\{ \gamma|_{J_\beta} \textrm{ for } \gamma\in \widetilde{X}_\beta\}
=
\{ \gamma|_{J_\beta} \textrm{ for } \gamma\in X_\beta\}
=\mathbb{A}^{v_\beta^0}.
$$
So there is a subspace $V$ of $\widetilde{X}_\beta$  of dimension $v_\beta^0$ such that $\{ \gamma|_{J_\beta} \textrm{ for } \gamma\in V \}$ has full dimension $v_\beta^0$. Then $V\cap X_\beta^\perp=0$ and $V+X_\beta^\perp=\widetilde{X}_\beta$, so $\dim X_\beta^\perp=v_\beta^\perp$. 

Next we shall show that $X_\beta^\perp\subseteq W_\beta'^\perp\oplus \bigoplus_{j=1}^t  X^\perp_{\alpha_{r'_j}}$, 
where $W_\beta'^\perp\cong \mathbb{A}^{w_\beta'+\sum_{h: s(h)=\beta} v_{t(h)}^0-v_\beta^0}$ consists of all vectors in $\mathbb{A}^{w'_\beta+\sum_{h:\;  s(h)=\beta } w_{t(h)}}$
whose $j$-th entry are 0 for $j\in J_\beta$ or $j\notin M_\beta$. 
Take an arbitrary element $P_\beta^*{\bf u} \in X_\beta^\perp$ 
for $u\in X_\beta$. We can write 
$${\bf u}=\begin{bmatrix} {\bf u}_0\\ {\bf u}_1\\ \vdots \\ {\bf u}_t \end{bmatrix}\; 
\textrm{ where ${\bf u}_0\in W'_\beta$, ${\bf u}_i\in X_{\alpha_{r'_i}}$ for $i=1,\dots,t$. }$$
Then
$$
P_\beta^*{\bf u}
=\begin{bmatrix}{\bf u}_0\\ 
\widetilde{P}_{\alpha_{r'_1}} {\bf u}_1\\ 
\vdots \\ 
\widetilde{P}_{\alpha_{r'_t}}{\bf u}_t 
\end{bmatrix}
={\bf v}_1+{\bf v}_2, 
\textrm{ where } 
{\bf v_1}=\begin{bmatrix}{\bf u}_0\\ \begin{bmatrix} {\bf I} &0\\ 0&0\end{bmatrix} {\bf u}_1\\ \vdots \\ \begin{bmatrix} {\bf I} &0\\ 0&0\end{bmatrix}  {\bf u}_t \end{bmatrix},\;
{\bf v}_2
=\begin{bmatrix} 0\\ \begin{bmatrix} 0 &0\\ -P_{\alpha_{r'_1}}&{\bf I}\end{bmatrix} {\bf u}_1\\ \vdots \\ \begin{bmatrix} 0 &0\\ -P_{\alpha_{r'_t}}&{\bf I}\end{bmatrix}  {\bf u}_t \end{bmatrix}
$$ 
Note that 
${\bf v}_1\in W_\beta'^{\perp}$
and 
${\bf v}_2\in {\bf 0}\oplus \bigoplus_{j=1}^t  X^\perp_{\alpha_{r'_j}}$. 
(Indeed, the second one is immediate from the definition of ${\bf v}_2$. To see the first one:
if $j\notin M_\beta$, the $j$-th entry of ${\bf v}_1$  is 0 by the definition of ${\bf v}_1$; if $j\in J_\beta$,  the $j$-th entry of ${\bf v}_2$ is 0 and the $j$-th entry of ${\bf v}_1+{\bf v}_2$ is 0  since $P^*u|_{J_\beta}={\bf 0}$, which implies that the $j$-th entry of ${\bf v}_1$ is 0). 
This proves $X_\beta^\perp\subseteq W_\beta'^\perp\oplus \bigoplus_{j=1}^t  X^\perp_{\alpha_{r'_j}}$.

Similar as in (1), we see that $\varphi$ is a morphism.

\medskip
\noindent (4) To show $\varphi$ is an isomorphism, we construct $\varphi^{-1}(q^0,(q^\perp,X_i^\perp))=(q,X_i)$ as follows: 
we let $q=\psi^{-1}(q^0,q^\perp)$, and for $\alpha\in I_0$, $\beta\in I_1$, let
$$\aligned
&X_\alpha
:= (\text{ column space of } A_\alpha({q^0}))+X_\alpha^\perp
= (\text{ column space of } \begin{bmatrix}{\bf I}\\ P_\alpha\end{bmatrix})+X_\alpha^\perp\\
&X_\beta
:=(\text{ column space of } A_\beta({q^0})+(P_\beta^*)^{-1}X_\beta^\perp\\
\endaligned
$$
It is routine to check that $\varphi$ and $\varphi^{-1}$ are indeed inverse to each other.

\medskip
\noindent (5) To check that the diagram commutes: 
$$\psi\circ\pi(q,X_i)=\psi(q)=(q^0,q^\perp)=(1\times\pi)(q^0,(q^\perp,X_i^\perp))=(1\times\pi)\circ\varphi(q,X_i)$$

\medskip
\noindent (6) To check $\psi(p)=(p,0)$: let $q=p$, and use the same assumption as above, we need to show $q^0=q$ and $q^\perp=0$. For $\alpha \in I_0$, since $A_\alpha(q)$ has rank $v_\alpha^0$, all other rows are linear combinations of its first $v_\alpha^0$ rows, that is, $A_\alpha(q)=\begin{bmatrix} A_\alpha(q)_{[1,\dots,v_\alpha^0;-]}\\  P'  A_\alpha(q)_{[1,\dots,v_\alpha^0;-]}\\  \end{bmatrix}$ for some matrix $P'$. Comparing with \eqref{AJBI} we see that $P'=P_\alpha$. Thus $Z_\alpha'=P_\alpha Z_\alpha$, $C_{r_j}''=P_\alpha C_{r_j}$, $C_{r_j}'''=P_\alpha C_{r_j}'$. Argue similarly for $A_\beta(q)$. We then conclude that $q=q^0$. Next, note that $\widetilde{A}_\alpha(q)=\begin{bmatrix} Z_\alpha&C_{r_1}&0&\cdots &C_{r_s}&0\\ 0&0&0&\cdots&0&0\end{bmatrix}$, so $A_\alpha(q)^\perp=0$, and similarly $A_\beta(q)^\perp=0$. Thus $q^\perp=0$.

The fact that $\psi({\bf E}^\circ_{u,w}\cap U)=U_0\times ({\bf E}^\circ_{u^\perp,w^\perp}\cap U^\perp)$, follows from \eqref{eq:rank Aqperp} and \eqref{eq:rank Bqperp}.
\end{proof}

\subsection{Proof of trivial local systems} \label{subsection:trivial local system}
It is the same as the proof of  \cite[Theorem 5.1]{L}. We reproduce it here for the readers' convenience. As pointed out in \cite[Remark 5.5]{L}, for the proof to work, it is essential to have the algebraic version, instead of the analytic version, of the Transversal Slice Theorem (Lemma \ref{lem:transversal slice}).
 
\begin{lemma}\label{lem:VXX}
Given $f:X\to Y$, $Y_a,L_a, n_a$ and the decomposition in \eqref{eq:decomp thm}. 
 Let $V$ be a nonsingular variety. Consider the following Cartesian diagram
$$\xymatrix{ V\times X\ar[r]^-{p'}\ar[d]_{1\times f}& X\ar[d]^f\\ V\times Y\ar[r]^-p&Y }$$
where $p$ and $p'$ are the natural projections. Define the pullback $\tilde{L}_a := p^*L_a$. Then
$$(1\times f)_* IC_{V\times X} \cong \bigoplus_{a} IC_{V\times\overline{Y_a}}(\tilde{L}_a)[n_a]$$
\end{lemma}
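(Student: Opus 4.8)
The plan is to deduce the factored decomposition from the BBDG Decomposition Theorem (Theorem \ref{BBDthm}) by exploiting the fact that the pushforward is a local construction on the base and that the base change is through the smooth projection $p\colon V\times Y\to Y$. First I would recall that, since $V$ is nonsingular, $V\times X$ and $V\times Y$ are varieties of the expected form and $IC_{V\times X}\cong p'^{*}IC_X[\dim V]$ up to the usual shift/twist conventions; similarly $IC_{V\times \overline{Y_a}}(\tilde L_a)\cong p^{*}IC_{\overline{Y_a}}(L_a)[\dim V]$, because $p$ restricted over $\overline{Y_a}$ is the smooth projection $V\times\overline{Y_a}\to\overline{Y_a}$ and pullback along a smooth morphism of relative dimension $\dim V$ sends $IC$ to $IC$ (with the shift) and sends a local system to its pullback. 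The key input is the smooth base change isomorphism $p^{*}f_{*}\mathcal K\cong (1\times f)_{*}p'^{*}\mathcal K$ for the Cartesian square, valid because $f$ is proper; I would apply it with $\mathcal K=IC_X$.

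Next I would carry out the computation: starting from \eqref{eq:decomp thm}, apply $p^{*}[\dim V]$ to both sides, obtaining
$$p^{*}(f_{*}IC_X)[\dim V]\cong \bigoplus_a p^{*}\big(IC_{\overline{Y_a}}(L_a)\big)[\dim V][n_a].$$
By smooth base change the left-hand side is $(1\times f)_{*}\big(p'^{*}IC_X[\dim V]\big)\cong (1\times f)_{*}IC_{V\times X}$, and by the smooth-pullback-of-IC statement each summand on the right is $IC_{V\times\overline{Y_a}}(\tilde L_a)[n_a]$, where $\tilde L_a=p^{*}L_a$ really means the pullback of $L_a$ along the smooth projection $V\times Y_a\to Y_a$ (equivalently the restriction of $p^{*}L_a$ to $V\times Y_a$, which is consistent with the statement since $p^{*}L_a$ is defined on $V\times Y_a$). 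I would also note that $V\times Y_a$ is again locally closed, smooth and irreducible in $V\times Y$ when $V$ is irreducible, and $\tilde L_a$ is semisimple since a pullback of a semisimple local system along a smooth morphism with connected fibers is semisimple; if one does not wish to assume $V$ irreducible, one decomposes $V$ into its connected components and runs the argument on each.

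The main obstacle, such as it is, is bookkeeping of the normalization conventions for $IC$ and the precise shift in $p^{*}$: depending on whether $IC_X$ is normalized so that $IC_X|_{X^{\mathrm{sm}}}=\underline{\mathbb Q}_X[\dim X]$ or $=\underline{\mathbb Q}_X$, the identity $IC_{V\times X}\cong p'^{*}IC_X[\dim V]$ must be stated with the matching shift, and the same shift must be inserted uniformly so that it cancels on both sides of the displayed isomorphism and leaves the shifts $[n_a]$ untouched. Once the shift is pinned down consistently, the proof is a two-line application of smooth base change together with the compatibility of intermediate extension with smooth pullback; no genuinely new geometric content is needed. I would present it in exactly that order: (1) smooth pullback sends $IC$ to $IC$ and local systems to pullback local systems, with the explicit shift; (2) smooth base change for the proper map $f$; (3) combine to transform \eqref{eq:decomp thm} into the asserted decomposition, remarking on semisimplicity and irreducibility of the pieces.
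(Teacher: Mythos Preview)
Your argument is correct and is precisely the standard route: proper base change for the Cartesian square together with the fact that smooth pullback (shifted by $\dim V$) sends $IC$ to $IC$ and local systems to their pullbacks. The paper does not supply an independent proof here but simply cites \cite[Lemma~5.4]{L}, whose proof is exactly this smooth-base-change computation, so your proposal matches the intended argument.
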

\begin{proof}
Same as \cite[Lemma 5.4]{L}. 
\end{proof}

\begin{proof}[Proof of Theorem \ref{thm:trivial local system}]
Let $IC_Z(L)[n]$ be a direct summand that appears in the decomposition of 
$\pi_*(IC_{\tilde{\mathcal{F}}_{v,w}})$. 
Take a general point $p$ of $Z$ is in ${\bf E}^\circ_{v^0,w}$ and apply Lemma \ref{lem:transversal slice}. Then $U^0$ is a Zariski open subset of ${\bf E}^\circ_{v^0,w}$. 
By uniqueness of the BBDG Decomposition, $IC_Z(L)[n]|_U$ is a direct summand of the decomposition of $\pi_*(IC_{\pi^{-1}U})$. By Lemma \ref{lem:VXX}, 
we have $IC_Z(L)[n]|_U\cong IC_{U^0\times\overline{Y_a}}(\tilde{L}_a)[n_a]$ for some $a$.   Thus $Z\cap U=U^0\times \overline{Y_a}$, $L\cong \tilde{L}_a$ on $Z\cap U$, and $n=n_a$. But  $Z\subseteq {\bf E}_{v^0,w}$ implies $Z\cap U\subseteq U^0\times\{0\}$. So we must have $Y_a=\{0\}$ and $Z\cap U=U^0\times\{0\}$, thus $Z={\bf E}_{v^0,w}$.  
Moreover, $L_a$ is the trivial local system $\mathbb{Q}_0$. So the local system $\tilde{L}_a$ on $U^0$ is also trivial since it is the pullback of $L_a$ under the map $U^0\times \{0\}\to \{0\}$. Then $L$ is trivial on $Z\cap U$, and we see that
$IC_Z(L)[n]\cong IC_{{\bf E}_{v^0,w}}[n]$. 
Let $v'=v^0$. Then $v' \le v$ as seen in Lemma \ref{lem:transversal slice}, and $(v',w)$ is $l$-dominant because 
${\bf E}^\circ_{v^0,w}$ is nonempty. 
\end{proof}

\section{Dual canonical basis}

The identification between dual canonical basis and triangular basis is explained to us by Fan Qin using results in \cite{Qin, KQ}. 

\subsection{Definition of $L(w)$, $M(w)$, $\mathbf{R}^\mathrm{finite}_t$, $\chi$}
Let $\mathcal{D}(\mathbf{E}_{w})$ be the bounded derived category of constructible sheaves of $\mathbb{Q}$-vector spaces on $\mathbf{E}_{w}$. For simplicity of notation, denote $IC_w(v) := IC_{\mathbf{E}_{v,w}}$. Define 
$\mathcal{P}_w := \{IC_w(v) \ | \  v\in\D(w)\}$. 
Define a full subcategory $\mathcal{Q}_w$ of $\mathcal{D}(\mathbf{E}_{w})$ whose objects are
 finite direct sums of $IC_w(v)[k]$  for various $v\in\D(w)$, $k\in\mathbb{Z}$.
  
Define an abelian group $\mathcal{K}_w$ to be generated by isomorphism classes $(L)$ of objects of $\mathcal{Q}_w$ and quotient by the relations $(L)+(L')=(L'')$ whenever $L\oplus L'\cong L''$. By abuse of notation we denote $(L)$ as $L$. We can view $\mathcal{K}_w$ as a free $\mathbb{Z}[t^\pm]$-module with a basis $\mathcal{P}_w$, by defining $t^iL=L[i]$ for $i\in\mathbb{Z}$. 
The duality on $\mathcal{D}(\mathbf{E}_{w})$ induces the bar involution on $\mathcal{K}_w$ satisfying
$\overline{tL}=t^{-1}\overline{L},\; \overline{IC_w(v)}=IC_w(v)$. By Theorem \ref{thm:trivial local system}, for arbitrary $v,w$, 
\begin{equation}\label{pi=sum aIC}
\pi_w(v) :=\pi_*(IC_{\widetilde{\mathcal{F}}_{v,w}})
=\sum_{v'\in \D(w)}\sum_{d}a^d_{v,v';w}IC_w(v')[d]
=\sum_{v'\in \D(w)}a_{v,v';w}IC_w(v')
\end{equation}
where $a_{v,v';w} :=\sum_d a^d_{v,v';w}t^d$. By Lemma \ref{lem:transversal slice}, 
\begin{equation}\label{eq:a=a}
a_{v,v';w}=a_{v^\perp,v'^\perp;w^\perp} \textrm{ for any $v^0\in\D(w)$}.
\end{equation} 
Then $\{ \pi_w(v)\ | \  v\in\D(w)\}$ is also a  $\mathbb{Z}[t^\pm]$-basis for $\mathcal{K}_w$, and $a_{v,v;w}=1$ for $(v,w)\in \D$. 
Define the dual $\mathcal{K}_w^* := {\rm Hom}_{\mathbb{Z}[t^\pm]}(\mathcal{K}_w,\mathbb{Z}[t^\pm])$, which is a free  $\mathbb{Z}[t^\pm]$-module with a basis
$\{L_w(v)\ |\ v\in\D(w)\}$, the dual basis to $\mathcal{P}_w$,
that is, 
$\langle L_w(v), IC_w(v')\rangle = \delta_{v,v'}$. The pairing $\langle-,-\rangle:\mathcal{K}_w^*\times \mathcal{K}_w\to \mathbb{Z}[t^\pm]$ satisfies the condition 
$\langle L,C[1]\rangle=\langle L[-1],C\rangle=t\langle L,C\rangle$, for all $L\in \mathcal{K}_w^*, C\in \mathcal{K}_w$. 
Then $\langle L_w(v), IC_w(v')\rangle = \delta_{v,v'}=\delta_{v^\perp,v'^\perp}=\langle f_{w^\perp}(v^\perp),IC_{w^\perp}(v'^{\perp})\rangle  \textrm{ for any $v^0\le v$}$, 
where $v^0$ is used to define $w^\perp=w-C_qv^0, v^\perp=v-v^0, v'^\perp=v'-v^0$. 
Define
$$\mathbf{R}_t := \{(f_w)\in \prod_w \mathcal{K}_w^* \ | \ \langle f_w,IC_w(v)\rangle=\langle f_{w^\perp}, IC_{w^\perp}(v^\perp)\rangle \textrm{ whenever }v\in\D(w), v^0\le v\}.$$
For $(f_w)\in \prod_w \mathcal{K}_w^*$, define $c_{wv}\in\mathbb{Z}[t^\pm]$ to satisfy
 $f_w=\sum_{v\in \D(w)} c_{wv}L_w(v)$. Then $c_{wv}=\langle f_w,IC_w(v)\rangle$, and 
$ (f_w)\in \mathbf{R}_t \textrm{ if and only if }c_{wv}=c_{w-C_qv,0} \textrm{ for every $l$-dominant pair $(w,v)$}$. 
So an element in $\mathbf{R}_t$ is uniquely determined by $\{c_{w0}\}_w$. 

Define $L(w)\in\mathbf{R}_t$ to be induced by $L_w(0)$,
that is, for any $(v',w')\in\D$, 
$\langle L(w),IC_{w'}(v')\rangle
=\delta_{w,w'-C_qv'}$. 
Define $\mathbf{R}^\mathrm{finite}_t$ to be the $\mathbb{Z}[t^\pm]$-submodule of $\mathbf{R}_t$ with the basis $\{L(w)\}_w$.

Define $\{M_w(v)\ | \ v\in\D(w)\}\in \mathcal{K}_w^*$ to be the functional 
$$(L)\mapsto \sum_k t^{\dim {\bf E}^\circ_{v,w}-k} \dim H^k(i^!_{x_{v,w}}L)$$
where $x_{v,w}$ is a point in ${\bf E}^\circ_{v,w}$, and $i_{x_{v,w}}: x_{v,w}\to {\bf E}_{w}$ is the natural embedding. 

Define $M(w)\in \mathbf{R}_t$ to be induced by $M_w(0)$, that is, denoting $i_0: \{0\}\to {\bf E}_w$ to be the natural embedding, then for any $(v',w')\in \D$, 
{\small
\begin{equation}\label{MIC}
\aligned
&\langle M(w),IC_{w'}(v')\rangle
\\
&=\begin{cases}
&\langle M_w(0),IC_{w}(v)\rangle
=\sum\limits_k t^{-k} \dim H^k(i^!_0IC_w(v)), 
\textrm{ if $\exists v\in\mathbb{Z}_{\ge0}^{n}: w-w'=C_q(v-v')$;}\\
&0, \textrm{ otherwise}.\\
\end{cases}
\endaligned
\end{equation}
}

Similar to \cite[\S7]{L}, 
denote a partial order
$w'\le_{\rm w} w \Leftrightarrow w'-w=C_qu$ for some $u \ge 0$ 
and denote 
$w'<_{\rm w} w$ if $w'\le_{\rm w} w$ and $w'\neq w$.
Then $\{M(w)\}_w$ is a basis of  $\mathbf{R}^\mathrm{finite}_t$, and  
\begin{equation}\label{L=sum M}
L(w)=\sum_{w''\le_{\rm w} w}b'_{ww''}M(w'')\in M(w)+\sum_{w''<_{\rm w} w}t^{-1}\mathbb{Z}[t^{-1}]M(w'').
\end{equation}

\medskip

Define a map 
\begin{equation}\label{eq:phi}
\Phi: \mathbb{Z}^{2n}\mapsto\mathbb{Z}^{2n},\quad \Phi(w)= \sum_{\beta \in I_1} (w_\beta-w'_\beta) e_\beta +\sum_{\alpha \in I_0}(w_\alpha'-w_\alpha)e_\alpha
\end{equation}
Define a map $\chi: \mathbf{R}^\mathrm{finite}_t\to \mathcal{T}$ by defining it on the basis $\{M(w)\}$ (note that we do not require $v\in\D(w)$): 
\begin{equation}\label{def_chiMw}
\aligned
\chi(M(w))
&=\sum_{v\in\mathbb{Z}_{\ge0}^{n}}   \big(\langle M(w),\pi_w(v)\rangle_{t\to\v^\delta}\big)   X^{\Phi(w)+\tilde{B}v}\\
&=\sum_{v\in\mathbb{Z}_{\ge0}^{n}}  \sum_k \v^{-k}\dim H^k(i_0^! \pi_w(v))   X^{\Phi(w)+\tilde{B}v}
\endaligned
\end{equation}
and extend it by the rules $\chi(L_1+L_2)=\chi(L_1)+\chi(L_2)$ and $\chi(tL)=\v^\delta\chi(L)$, 
The following is shown in \cite[\S7.3]{L}:
\begin{equation}\label{chiLw}
\chi(L(w))=\sum_{v\in\mathbb{Z}_{\ge0}^{n}} a_{v,0;w}(\v)X^{\Phi(w)+\tilde{B}v}
\end{equation}

\medskip

\subsection{$\{\overline{\chi(M(w))}\}$ and the standard monomial basis} 
All cohomology groups in this paper are taken with coefficient $\mathbb{Q}$, thus we write $\dim H^i(X)$ for $\dim H^i(X,\mathbb{Q})$.

Denote $P_\v(X):=\sum_i\dim H^i(X) \v^i$. 
For $n,k\in\mathbb{Z}_{\ge0}$, define the $q$-binomial coefficient as follows (where $q$ is replaced by $\v$).
$$[n] := \frac{\v^n-\v^{-n}}{\v-\v^{-1}},\quad 
{n \brack k} := \frac{[n] [n-1]\cdots [n-k+1]}{[k] [k-1] \cdots [1]} .$$
The cohomology groups of a (complex) Grassmannian is well-known; from which we have  
$$P_\v(Gr(k,n))
=\v^{\dim_\mathbb{C}Gr(k,n)} {n \brack k}
=\v^{k(n-k)} {n \brack k}
$$
The following is also well-known. 
\begin{lemma} \label{lemma:Grassmannian bundle}
Let $V$ be a complex vector bundle of rank $n$ over a complex variety $X$, $1\le d\le n-1$, and $E=Gr_X(d,V)$ be the associated Grassmannian bundle over $X$, $F\cong Gr(d,n)$ be a fiber. Then 
$$P_\v(E,\mathbb{Q})=P_\v(F,\mathbb{Q})  \cdot P_\v(X,\mathbb{Q}).$$  
\end{lemma}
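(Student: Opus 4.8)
The plan is to deduce the identity from the Leray--Hirsch theorem applied to the locally trivial fibration $\pi_E\colon E=Gr_X(d,V)\to X$, whose fiber is $F\cong Gr(d,n)$. I will use the following classical facts about the fiber: $H^*(Gr(d,n);\mathbb{Q})$ is concentrated in even degrees and is free of finite rank over $\mathbb{Q}$, and it is generated as a $\mathbb{Q}$-algebra by the Chern classes $c_1,\dots,c_d$ of the tautological rank-$d$ subbundle of the trivial bundle $\mathbb{C}^n$ over $Gr(d,n)$. Accordingly I will fix a homogeneous $\mathbb{Q}$-basis $\sigma_1,\dots,\sigma_N$ of $H^*(Gr(d,n);\mathbb{Q})$ (say the Schubert basis); the generation statement lets me write each $\sigma_j$ as a polynomial in $c_1,\dots,c_d$. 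These facts follow from the Schubert cell decomposition of $Gr(d,n)$, all of whose cells have even real dimension; the same decomposition recovers $P_\v(F)=\v^{d(n-d)}{n\brack d}$ as recorded above.

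The central step is to lift $\sigma_1,\dots,\sigma_N$ to classes on the total space. Let $\mathcal{S}\subseteq\pi_E^*V$ be the tautological rank-$d$ subbundle over $E$, with Chern classes $c_i(\mathcal{S})\in H^{2i}(E;\mathbb{Q})$. For every $x\in X$ the restriction of $\mathcal{S}$ to the fiber $\pi_E^{-1}(x)\cong Gr(d,V_x)$ is the tautological subbundle there, so $c_i(\mathcal{S})$ restricts to $c_i$ on each fiber; substituting $c_i(\mathcal{S})$ for $c_i$ in the polynomial expressions for the $\sigma_j$ then produces homogeneous classes $\widetilde\sigma_1,\dots,\widetilde\sigma_N\in H^*(E;\mathbb{Q})$, with $\widetilde\sigma_j$ in the same degree as $\sigma_j$, whose restriction to every fiber is the basis $\sigma_1,\dots,\sigma_N$ of $H^*(F;\mathbb{Q})$. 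Since $E\to X$ is a fiber bundle and $H^*(F;\mathbb{Q})$ is free of finite rank over $\mathbb{Q}$, the Leray--Hirsch theorem applies and shows that $H^*(E;\mathbb{Q})$ is a free graded $H^*(X;\mathbb{Q})$-module on the homogeneous basis $\widetilde\sigma_1,\dots,\widetilde\sigma_N$; equivalently, the Leray spectral sequence of $\pi_E$ degenerates at $E_2=H^*(X;\mathbb{Q})\otimes_{\mathbb{Q}}H^*(F;\mathbb{Q})$.

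It follows that $H^*(E;\mathbb{Q})\cong H^*(X;\mathbb{Q})\otimes_{\mathbb{Q}}H^*(F;\mathbb{Q})$ as graded $\mathbb{Q}$-vector spaces, so $\dim H^m(E)=\sum_{p+q=m}\dim H^p(X)\dim H^q(F)$ for every $m$; as $H^q(F;\mathbb{Q})=0$ for $q>2d(n-d)$ each such sum is finite, and multiplying by $\v^m$ and summing over $m$ gives precisely $P_\v(E)=P_\v(X)\cdot P_\v(F)$. The one point that genuinely needs proof --- and hence the main, if entirely standard, obstacle --- is the Leray--Hirsch input that the Chern classes of $\mathcal{S}$ restrict to $\mathbb{Q}$-algebra generators of the cohomology of each fiber; this is the splitting principle for Grassmann bundles, which in turn reduces to the projective-bundle formula applied iteratively to the associated flag bundle.
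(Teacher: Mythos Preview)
Your proof is correct and follows essentially the same route as the paper's: both apply the Leray--Hirsch theorem after verifying that the restriction $H^*(E)\to H^*(F)$ is surjective, the paper by quoting Fulton's Chow-ring description of the Grassmannian bundle (Example 14.6.6), and you by directly lifting a basis via the Chern classes of the tautological subbundle $\mathcal{S}\subseteq\pi_E^*V$. The paper even remarks parenthetically that one can argue directly in $H^*$ without Chow rings, which is exactly what you do.
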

\begin{proof}
It follows the Leray–Hirsch theorem \cite[\S4.D]{Hatcher} and
the surjectivity of $H^*(E)\to H^*(F)$. For the latter, recall that $A^*(X)$ is the Chow ring of $X$ and $c_k(E)$ (for $k=1,\dots,n$) are the Chern classes of $E$. 
By
\cite[Example 14.6.6]{Fulton},
$$
\aligned
&A^*(E)=A^*(X)[a_1,\dots,a_d,b_1,\dots,b_{n-d}]/ (\sum_{i=0}^k a_ib_{k-i}-c_k(E))_{k=1,\dots,n},\\
&A^*(F)=\mathbb{Q}[a_1,\dots,a_d,b_1,\dots,b_{n-d}]/ (\sum_{i=0}^k a_ib_{k-i}-\delta_{k,0})_{k=1,\dots,n}
\endaligned
$$
thus the restriction $A^k(E)\to A^k(F)$ is surjective for any $k$. So the composition $A^k(E)\to H^{2k}(E)\to H^{2k}(F)\cong A^k(F)$ is surjective. Thus $H^{2k}(E)\to H^{2k}(F)$ is surjective. (Alternatively, one can prove an analog of \cite[Example 14.6.6]{Fulton} for $H^*$ and not use Chow rings.)
\end{proof}

\begin{lemma}
{\rm(a)} We have
$$
\aligned
P_\v(\mathcal{F}_{v,w})
&=\prod_{\beta\in I_1}P_\v(Gr(v_\beta,w_\beta'+\sum_{h:s(h)=\beta} v_{t(h)}))
\cdot
\prod_{\alpha\in I_0} P_\v(Gr(v_\alpha,w_\alpha))\\
&=\v^{d_{v,w}} \prod_{\beta\in I_1}  
{w_\beta'+\sum_{h:s(h)=\beta} v_{t(h)} \brack v_\beta}
\cdot
\prod_{\alpha\in I_0} {w_\alpha \brack v_\alpha}\\
\endaligned
$$ 

{\rm(b)} Recall that 
$d_{v,w}=\dim \mathcal{F}_{v,w}$, $\tilde{d}_{v,w}=\dim \widetilde{\mathcal{F}}_{v,w}$. Then
\begin{equation}\label{chiMw}
\aligned
\chi(M(w))
&=\sum_{v\in\mathbb{Z}_{\ge0}^{n}}
\v^{d_{v,w}-\tilde{d}_{v,w}}
\prod_{\beta\in I_1}  
{w_\beta'+\sum_{h:s(h)=\beta} v_{t(h)} \brack v_\beta}
\cdot
\prod_{\alpha\in I_0} {w_\alpha \brack v_\alpha}
X^{\Phi(w)+\tilde{B}v}\\
&=\sum_{v\in\mathbb{Z}_{\ge0}^{n}}   
\v^{-\tilde{d}_{v,w} }  P_\v(\mathcal{F}_{v,w})  X^{\Phi(w)+\tilde{B}v}\\
\endaligned
\end{equation}
\end{lemma}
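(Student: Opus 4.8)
The plan is to read both parts off the geometry that has already been set up. For part~(a), the essential input is the last sentence of Lemma~\ref{lem:ab}(b): $\mathcal{F}_{v,w}$ is a $\prod_{\beta\in I_1}Gr(v_\beta,\,w_\beta'+\sum_{h:s(h)=\beta}v_{t(h)})$-bundle over $X_0:=\prod_{\alpha\in I_0}Gr(v_\alpha,W_\alpha)$. First I would point out that bipartiteness is exactly what turns this into a genuine fibre bundle whose fibre is a \emph{product of Grassmannians of fixed dimensions}: an arrow $h$ with source $\beta\in I_1$ has $t(h)\in I_0$, so the vector bundle $\mathbb{A}^{w_\beta'}\oplus\bigoplus_{h:s(h)=\beta}S_{Gr(v_{t(h)},W_{t(h)})}$ (pulled back to $X_0$), whose associated Grassmannian bundle is the $\beta$-th factor, has constant rank $w_\beta'+\sum_{h:s(h)=\beta}v_{t(h)}$. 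Realizing $\mathcal{F}_{v,w}$ as the fibre product over $X_0$ of these single Grassmannian bundles, one for each $\beta\in I_1$, I would apply Lemma~\ref{lemma:Grassmannian bundle} once per $\beta$ (handling the trivial cases $v_\beta\in\{0,\ w_\beta'+\sum_{h:s(h)=\beta}v_{t(h)}\}$, where the fibre is a point, by hand) together with the Künneth formula for $X_0$; this yields the first displayed equality of~(a), since $W_\alpha$ has dimension $w_\alpha$. For the second equality I would substitute $P_\v(Gr(k,n))=\v^{k(n-k)}{n\brack k}$ and check that the accumulated $\v$-exponent $\sum_{\alpha\in I_0}v_\alpha(w_\alpha-v_\alpha)+\sum_{\beta\in I_1}v_\beta(w_\beta'+\sum_{h:s(h)=\beta}v_{t(h)}-v_\beta)$ coincides with the expression for $d_{v,w}=\dim\mathcal{F}_{v,w}$ recorded in Proposition~\ref{prop:fiber}(b).

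For part~(b) I would first dispose of the degenerate case: if $\widetilde{\mathcal{F}}_{v,w}=\emptyset$ (equivalently $\mathcal{F}_{v,w}=\emptyset$, equivalently \eqref{nonempty F} fails) then $\pi_w(v)=0$ and the relevant binomial coefficients vanish, so the $v$-summand of \eqref{chiMw} is trivially correct; hence assume nonemptiness. By Proposition~\ref{prop:fiber}(b), $\widetilde{\mathcal{F}}_{v,w}$ is smooth of dimension $\tilde{d}_{v,w}$, so $IC_{\widetilde{\mathcal{F}}_{v,w}}=\mathbb{Q}_{\widetilde{\mathcal{F}}_{v,w}}[\tilde{d}_{v,w}]$ is Verdier self-dual, and since $\pi$ is proper (as used for the BBDG decomposition in Theorem~\ref{thm:trivial local system}) one has $\pi_*=\pi_!$ commuting with Verdier duality, whence $\pi_w(v)=\pi_*(IC_{\widetilde{\mathcal{F}}_{v,w}})$ is self-dual. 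Consequently $i_0^!\pi_w(v)$ is the dual, over the point, of $i_0^*\pi_w(v)$, so $\dim H^k(i_0^!\pi_w(v))=\dim H^{-k}(i_0^*\pi_w(v))$.

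Next I would compute $i_0^*\pi_w(v)$ by proper base change along the Cartesian square with corner $\pi^{-1}(0)$: it is $R\Gamma(\pi^{-1}(0),\mathbb{Q})[\tilde{d}_{v,w}]$, and $\pi^{-1}(0)\cong\mathcal{F}_{v,w}$ by Lemma~\ref{EF}(b). Combining, $\dim H^k(i_0^!\pi_w(v))=\dim H^{\tilde{d}_{v,w}-k}(\mathcal{F}_{v,w})$, hence $\sum_k\v^{-k}\dim H^k(i_0^!\pi_w(v))=\v^{-\tilde{d}_{v,w}}P_\v(\mathcal{F}_{v,w})$. Substituting this into the second expression for $\chi(M(w))$ in \eqref{def_chiMw} gives the second displayed form in \eqref{chiMw}, and substituting the formula of part~(a) for $P_\v(\mathcal{F}_{v,w})$ gives the first. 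I expect the only places needing care to be the exponent bookkeeping in~(a) (matching the product of $\v$-powers of the Grassmannians with $d_{v,w}$) and keeping the shift and Verdier-duality conventions straight in~(b) so that the sign of the exponent comes out right; neither is a real obstacle, since all the geometric content — smoothness of $\widetilde{\mathcal{F}}_{v,w}$, the iterated bundle structure of $\mathcal{F}_{v,w}$, and properness of $\pi$ — is already available.
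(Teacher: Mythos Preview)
Your proposal is correct and follows essentially the same approach as the paper. For (a) both arguments iterate Lemma~\ref{lemma:Grassmannian bundle} over the bundle structure of $\mathcal{F}_{v,w}$; for (b) both compute the costalk $i_0^!\pi_w(v)$ in terms of $H^*(\mathcal{F}_{v,w})$ via base change along $\pi^{-1}(0)\cong\mathcal{F}_{v,w}$, the only cosmetic difference being that the paper lands on $\v^{2d_{v,w}-\tilde d_{v,w}}P_{\v^{-1}}(\mathcal{F}_{v,w})$ (implicitly using $j^!=j^*[-2\,\mathrm{codim}]$ for the regular embedding of smooth varieties) while your Verdier-duality route gives $\v^{-\tilde d_{v,w}}P_{\v}(\mathcal{F}_{v,w})$ directly, and these coincide by Poincar\'e duality for the smooth projective $\mathcal{F}_{v,w}$.
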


\begin{proof}
(a) follows from iteratively applying Lemma \ref{lemma:Grassmannian bundle}. 

(b) Denote the embedding $i_0:\{0\}\to \myE_{v,w}$. 
Then
\begin{equation}\label{Mpi}
\aligned
&\langle M(w),\pi_{w}(v)\rangle
=\v^{2d_{v,w}-\tilde{d}_{v,w}}\sum_k\dim t^{-k}H^{k}( { {\mathcal{F}}_{v,w} } )\
=\v^{2d_{v,w}-\tilde{d}_{v,w}}
\dim P_{\v^{-1}} ( { {\mathcal{F}}_{v,w} })\\
&
\stackrel{\rm(a)}{=}\v^{2d_{v,w}-\tilde{d}_{v,w}}
\v^{-\dim_\mathbb{C} \mathcal{F}_{v,w}} 
\prod_{\beta\in I_1}  
{w_\beta'+\sum_{h:s(h)=\beta} v_{t(h)} \brack v_\beta}
\cdot
\prod_{\alpha\in I_0} {w_\alpha \brack v_\alpha}
\\
&=\v^{d_{v,w}-\tilde{d}_{v,w}}
\prod_{\beta\in I_1}  
{w_\beta'+\sum_{h:s(h)=\beta} v_{t(h)} \brack v_\beta}
\cdot
\prod_{\alpha\in I_0} {w_\alpha \brack v_\alpha}
 =\v^{-\tilde{d}_{v,w}}
P_\v(\mathcal{F}_{v,w}). \\
\endaligned
\end{equation}
This together with \eqref{def_chiMw} imply \eqref{chiMw}.
\end{proof}

Without loss of generality, assume $I_1=\{1,\dots,k\}$, $I_0=\{k+1,\dots,n\}$. We denote $t'=\mu_{I_1}(t_0):=\mu_{1}\cdots\mu_{k}(t_0)$ and the attached seed is $(\Lambda',\tilde{B}',\tilde{X}')$. Thus 
$\tilde{B}'=\begin{bmatrix}-B\\J\end{bmatrix}$ where $J$ is the diagonal matrix with the first $k$ diagonal entries being $-1$ and the rest being $1$,
$\tilde{\Lambda}'=\begin{bmatrix}0&-J\\J&-B\end{bmatrix}$.  The standard monomial basis elements with respect to this initial seed is
$$
 E_a(t') = \v^{v'(a)} 
 X^{\sum_{i=n+1}^{2n}a_ie_i}
\prod_{i=n,\dots,1} \Big(X_i(t')^{[a_i]_+}X_i'(t')^{[-a_i]_+} 
\Big), \textrm{ for $a\in\mathbb{Z}^{2n}$.}
$$
where the index $i$ runs decreasingly from $n$ to $1$,  $X_i'(t')=\mu_i(X_i(t'))$, and $v'(a)\in\mathbb{Z}$ is determined by the condition that the leading term of $E_a(t')$ in seed $t'$ is bar-invariant. 
Then 
$X'_i(t')=X_i$ for $i\in I_1$, 
$X'_i(t')=X(t')^{-e_i+[b'_i]_+}+X(t')^{-e_i+[-b'_i]_+}$ for $i\in I_0$. Note that in the above expression of $E_a(t')$, the two factors in $X_i(t')^{[a_i]_+}X_i'(t')^{[-a_i]_+}$ can be swapped because one of  them is $1$.

We generalize the above expression into $E^*_a$ as follows: for $1\le i\le n$, replace $[-a_i]_+$ by $w_i$, $[a_i]_+$ by $w'_i$, and define
$$
E^*_{w,a_{n+1},\dots,a_{2n}} = \v^{v'} 
 X^{\sum_{i=n+1}^{2n}a_ie_i}
\prod_{i=n,\dots,k+1} 
\Big(
X_i'(t')^{w_i}X_i(t')^{w'_i} 
\Big)
\cdot
\prod_{i=k,\dots,1} 
\Big(
X_i(t')^{w'_i}X_i'(t')^{w_i} 
\Big)
$$
where $v'$ is similar as above. Note a technical subtlety that in the first product (for $i=n,\dots,k+1$) we have to write the two factors of $X_i'(t')^{w_i}X_i(t')^{w'_i}$ in that specific order; if we swap the two factors, then the power of $\v$ in  Lemma \ref{chiMbar=M''} will be incorrect. 

Each $w\in\mathbb{Z}_{\ge0}^{2n}$ can be uniquely written as $w={}^fw+{}^\phi w$ where both ${}^fw$ and ${}^\phi w$ are in $\mathbb{Z}_{\ge0}^{2n}$ and 
${}^\phi w$ satisfies ${}^\phi w_i{}^\phi w'_i=0$ for all $1\le i\le n$. 
We have
\begin{equation}\label{M'=M'}
\textrm{ if $w={}^\phi w$, then  
$E^*_{w,a_{n+1},\dots,a_{2n}}= E_a(t')$.}
\end{equation} 

The following is easy to show so omit skip the proof.
\begin{lemma}\label{lem:Xt'u}
Assume $u_i\ge0$ for $i\in I_1$. Then
$$X(t')^{u}=\sum_v(\prod_{i\in I_1} { u_i \brack v_i})X^{\sum_{i\in I_1} (-u_ie_i+v_ib_i) + \sum_{i\notin I_1} u_ie_i}$$
\end{lemma}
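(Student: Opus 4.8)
The plan is a direct computation with the quantum binomial theorem, controlling the powers of $\v$ at the end via bar-invariance rather than by direct bookkeeping.

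First I would identify the relevant cluster variables of the seed $t'=\mu_1\cdots\mu_k(t_0)$. Since the vertices of $I_1$ are pairwise non-adjacent sources, the mutations $\mu_1,\dots,\mu_k$ pairwise commute, and for $i\in I_1$ none of the $\mu_j$ with $j\in I_1\setminus\{i\}$ changes the $i$-th cluster variable or the $i$-th column $b_i$ of $\tilde B$; hence $X_i(t')=X^{-e_i+[b_i]_+}+X^{-e_i+[-b_i]_+}$ with $b_i$ the \emph{original} column. As $i\in I_1$ is a source, the $i$-th column of $B$ is nonnegative, so $b_i\ge 0$ in $\ZZ^{2n}$ and this simplifies to $X_i(t')=X^{-e_i+b_i}+X^{-e_i}$. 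For $i\notin I_1$ the sequence of mutations does not touch slot $i$, so $X_i(t')=X^{e_i}$.

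Next, for each $i\in I_1$ I would apply the quantum binomial theorem to $X_i(t')^{u_i}=(Y_i+Z_i)^{u_i}$ with $Y_i=X^{-e_i+b_i}$, $Z_i=X^{-e_i}$. The compatibility identity $\tilde B^{T}\Lambda=(\mathbf{I}_n\ 0)$ gives $\Lambda b_i=-e_i$, whence $\Lambda(-e_i+b_i,-e_i)=-1$, so $Z_iY_i=\v^{2}Y_iZ_i$ and $Y_i^{v_i}Z_i^{u_i-v_i}=\v^{-v_i(u_i-v_i)}X^{-u_ie_i+v_ib_i}$; together with $\binom{u_i}{v_i}_{\v^{2}}=\v^{v_i(u_i-v_i)}{u_i\brack v_i}$ the powers of $\v$ cancel and
$$X_i(t')^{u_i}=\sum_{v_i=0}^{u_i}{u_i\brack v_i}\;X^{-u_ie_i+v_ib_i}\qquad(i\in I_1).$$
Multiplying the factors together, using $X_i(t')^{u_i}=X^{u_ie_i}$ for $i\notin I_1$, and expanding the product of sums, I obtain $X(t')^{u}=\v^{c}\prod_iX_i(t')^{u_i}=\sum_v\v^{\,c+\gamma(u,v)}\big(\prod_{i\in I_1}{u_i\brack v_i}\big)X^{m(u,v)}$, where $c$ is the normalizing exponent of the quantum-torus monomial, $\gamma(u,v)$ gathers the pairwise $\Lambda$-exponents from reordering the monomials, and $m(u,v)=\sum_{i\in I_1}(-u_ie_i+v_ib_i)+\sum_{i\notin I_1}u_ie_i$ is precisely the exponent claimed.

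Finally I would invoke bar-invariance. The monomials $X^{m(u,v)}$ are pairwise distinct as $v$ varies, because $m(u,v)-m(u,v')=\sum_{i\in I_1}(v_i-v_i')b_i$ and the columns $b_i$ of $\tilde B$ are linearly independent; the element $X(t')^{u}$ is bar-invariant; and each ${u_i\brack v_i}$ (with $0\le v_i\le u_i$) is a nonzero symmetric, hence bar-invariant, Laurent polynomial in $\v$. Comparing the coefficient of each $X^{m(u,v)}$ in $\overline{X(t')^{u}}=X(t')^{u}$ forces $\v^{c+\gamma(u,v)}=\v^{-(c+\gamma(u,v))}$, i.e.\ $c+\gamma(u,v)=0$, which yields the asserted identity. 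The only mild obstacle is keeping track of the powers of $\v$; the bar-invariance argument circumvents this bookkeeping, and everything else is routine—presumably why the paper omits the proof.
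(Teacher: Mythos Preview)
Your argument is correct. The paper omits the proof entirely (``The following is easy to show so [we] skip the proof''), and what you wrote is precisely the natural direct computation one would expect: identify $X_i(t')=X^{-e_i+b_i}+X^{-e_i}$ for $i\in I_1$ and $X_i(t')=X^{e_i}$ otherwise, apply the quantum binomial theorem to each factor, and assemble. The bar-invariance shortcut you use at the end---observing that $X(t')^u$ is bar-invariant, the monomials $X^{m(u,v)}$ are pairwise distinct (since the columns of $\tilde B$ are independent), and the $q$-binomials are bar-invariant, forcing all stray powers of~$\v$ to vanish---is a clean way to sidestep explicit bookkeeping of the $\Lambda$-exponents, and is entirely in the spirit of how such identities are handled elsewhere in the paper (cf.\ the ``$\sim$'' device in the proof of Lemma~\ref{chiMbar=M''}).
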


\begin{lemma}\label{chiMbar=M''}
For any $w\in\mathbb{Z}_{\ge0}^{2n}$, we have 
$$E^*_{w,0,\dots,0}=\overline{\chi(M(w))}
=\sum_{v\in\mathbb{Z}_{\ge0}^{n}}   \v^{\tilde{d}_{v,w}-d_{v,w}} (\prod_{\alpha\in I_0}  {w_\alpha\brack v_\alpha} 
\prod_{\beta \in I_1} { w'_\beta+\sum_{\alpha \in I_0} b_{\alpha\beta}v_\alpha \brack v_\beta} ) 
X^{ \Phi(w) +\sum_{1\le i\le n}v_i b_i  }.
$$ 
where ${\Phi}$ is defined in \eqref{eq:phi}.
In particular, if $w={}^\phi w$, then $\overline{\chi(M(w))}$ is the standard basis element $E_{(w'_i-w_i)_{1\le i\le n}}(t')$ with the initial seed at $t'=\mu_{I_1}(t_0)$.
\end{lemma}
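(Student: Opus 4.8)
\textbf{Proof plan for Lemma \ref{chiMbar=M''}.}
The plan is to compute $\overline{\chi(M(w))}$ directly from the formula \eqref{chiMw}, and separately expand $E^*_{w,0,\dots,0}$ using Lemma \ref{lem:Xt'u}, and then match the two. First I would apply the bar-involution to \eqref{chiMw}: since the monomials $X^{\Phi(w)+\tilde{B}v}$ are bar-invariant (they are genuine $X^{\bf e}$, not $\v$-scalar multiples), barring \eqref{chiMw} simply replaces $\v^{-\tilde d_{v,w}}P_{\v}(\mathcal{F}_{v,w})$ by $\v^{\tilde d_{v,w}}P_{\v^{-1}}(\mathcal{F}_{v,w})$. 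Using part (a) of the previous lemma, $P_{\v^{-1}}(\mathcal{F}_{v,w}) = \v^{-d_{v,w}}\prod_{\beta\in I_1}{w'_\beta+\sum_{h:s(h)=\beta}v_{t(h)}\brack v_\beta}\prod_{\alpha\in I_0}{w_\alpha\brack v_\alpha}$ (the $q$-binomials are symmetric under $\v\to\v^{-1}$). Rewriting $\sum_{h:s(h)=\beta}v_{t(h)}=\sum_{\alpha\in I_0}b_{\alpha\beta}v_\alpha$ using the definition of $\mathcal{Q}^{\rm op}$ (arrows $\beta\to\alpha$ exist with multiplicity $b_{\alpha\beta}$ when $\beta\in I_1$, $\alpha\in I_0$), and noting $\sum_i v_i b_i = \tilde{B}v$ restricted to the first $n$ coordinates, this gives exactly the claimed middle expression for $\overline{\chi(M(w))}$.

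The second half is to show $E^*_{w,0,\dots,0}$ equals that same sum. Here I would start from the definition of $E^*_{w,0,\dots,0}$ as a product of powers of $X_i(t')$ and $X_i'(t')$ in the seed $t'=\mu_{I_1}(t_0)$, with $a_{n+1}=\dots=a_{2n}=0$. For $i\in I_0$, $X_i(t')$ is (up to the frozen part) a genuine cluster variable of the mutated seed, while $X_i'(t') = \mu_i(X_i(t'))$; for $i\in I_1$, $X_i'(t')=X_i$ is an initial variable and $X_i(t')=X(t')^{e_i}$. The key computational tool is Lemma \ref{lem:Xt'u}, which expands a monomial $X(t')^u$ (with nonnegative $I_1$-exponents) back into the initial seed $\{X^{\bf e}\}$ and produces precisely a product of $q$-binomials $\prod_{i\in I_1}{u_i\brack v_i}$ together with a shift $\sum_{i\in I_1}(-u_ie_i+v_ib_i)$ in the exponent. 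Applying this after collecting the $X_i'(t')^{w_i}X_i(t')^{w'_i}$ factors (for $\alpha\in I_0$ these contribute the $\prod_{\alpha}{w_\alpha\brack v_\alpha}$ via the binomial theorem for quantum binomials on $(X^{-e_\alpha+[b'_\alpha]_+}+X^{-e_\alpha+[-b'_\alpha]_+})^{w_\alpha}$, and for $\beta\in I_1$ the factor $X(t')^{w'_\beta e_\beta}$ together with the $v_\alpha$-shifts from the $I_0$-part feeds into Lemma \ref{lem:Xt'u} with effective exponent $u_\beta = w'_\beta+\sum_\alpha b_{\alpha\beta}v_\alpha$), one obtains the binomial product $\prod_{\alpha\in I_0}{w_\alpha\brack v_\alpha}\prod_{\beta\in I_1}{w'_\beta+\sum_\alpha b_{\alpha\beta}v_\alpha\brack v_\beta}$ and the exponent $\Phi(w)+\sum_i v_ib_i$.

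The remaining point is the power of $\v$. I would pin down the normalizing exponent $v'$ in the definition of $E^*_{w,0,\dots,0}$ by the bar-invariance condition on its leading term (the term $v=0$ on the $I_0$-side, appropriately), compute the $\v$-powers accumulated from the quantum-torus multiplications $X^{\bf e}X^{{\bf e}'}=\v^{\Lambda({\bf e},{\bf e}')}X^{{\bf e}+{\bf e}'}$ when assembling the ordered product, and check the total matches $\v^{\tilde d_{v,w}-d_{v,w}} = \v^{\sum_\alpha v_\alpha w'_\alpha + \sum_\beta v_\beta w_\beta}$. This bilinear-form bookkeeping is where the ``technical subtlety'' flagged after the definition of $E^*$ (the mandatory order of $X_i'(t')^{w_i}X_i(t')^{w'_i}$ for $i\in I_0$) becomes essential: swapping factors changes the accumulated $\Lambda$-exponent. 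The last sentence of the lemma is then immediate: if $w={}^\phi w$ then $w_iw'_i=0$ for all $i$, so $(w_i,w'_i)=([-a_i]_+,[a_i]_+)$ for $a_i=w'_i-w_i$, and by \eqref{M'=M'} the element $E^*_{w,0,\dots,0}$ is literally $E_a(t')$ with $a=(w'_i-w_i)_{1\le i\le n}$ (zero frozen part).

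\textbf{Main obstacle.} I expect the hard part to be the exact $\v$-power bookkeeping: tracking all the $\Lambda(-,-)$ contributions from reassembling the ordered product $\prod_{i=n,\dots,k+1}(X_i'(t')^{w_i}X_i(t')^{w'_i})\cdot\prod_{i=k,\dots,1}(X_i(t')^{w'_i}X_i'(t')^{w_i})$ into initial-seed monomials, accounting for the cross terms between the $I_0$-block and the $I_1$-block (the $v_\alpha$-dependent shift of the effective $I_1$-exponent), and confirming that the normalization $v'$ makes everything collapse to the clean exponent $\tilde d_{v,w}-d_{v,w}$. The combinatorics of which exponent vectors appear and the quantum binomial identities are routine; the potential for sign/ordering errors in $\Lambda$ is the real risk, and this is precisely why the specific factor ordering in the definition of $E^*$ was imposed.
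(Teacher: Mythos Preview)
Your proposal is correct and follows essentially the same approach as the paper: obtain the formula for $\overline{\chi(M(w))}$ by barring \eqref{chiMw} (the paper leaves this step implicit, since the $q$-binomials are bar-invariant and only the $\v$-power flips sign), and expand $E^*_{w,0,\dots,0}$ via the quantum binomial theorem on the $I_0$-factors followed by Lemma~\ref{lem:Xt'u} for the $I_1$-part. For the $\v$-power bookkeeping you flagged as the obstacle, the paper uses a shortcut worth noting: rather than computing the normalization $v'$ and all $\Lambda$-contributions explicitly, it first observes that the normalizing power is the same in seeds $t$ and $t'$ (by comparing leading terms), then works modulo overall $\v$-powers (writing $f\sim \v^i f$) while tracking only the $v$-dependent exponents $p_1=\sum_{\alpha\in I_0}w'_\alpha v_\alpha$ and $p_2=p_1+\sum_{\beta\in I_1}w_\beta v_\beta=\tilde d_{v,w}-d_{v,w}$, and finally notes that the resulting expression is already a sum of bar-invariant monomials with bar-invariant coefficients, hence self-normalized.
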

\begin{proof}
For $i\in I_0$,  we have $b'_i\in\mathbb{Z}_{\ge0}^m$, thus  
$$
\aligned
X_i'(t')^{w_i} X_i(t')^{w'_i}
&=  \ \big(X(t')^{-e_i+b'_i}+X(t')^{-e_i}\big)^{w_i} X(t')^{w'_ie_i}\\
&= 
\sum_{v_i} {w_i\brack v_i}X(t')^{-w_ie_i  + v_ib'_i} X(t')^{w'_ie_i} \\
&=
 \sum_{v_i} \v^{\Lambda'(-w_ie_i+v_ib'_i,w'_ie_i)}{w_i\brack v_i}X(t')^{(w'_i-w_i)e_i  + v_ib'_i}\\
&=
 \sum_{v_i} \v^{w'_iv_i}{w_i\brack v_i}X(t')^{(w'_i-w_i)e_i +  v_ib'_i}\\
\endaligned
$$ 

For $i\in I_1$,  
$X_i(t')^{w'_i}X_i'(t')^{w_i}=X(t')^{w'_ie_i} X_i^{w_i} $ 

By comparing the leading terms, it is easy to see that the power of $\v$ to normalize 
$E^*_w$ in $t'$ is the same as the one to normalize $E^*_w$ in $t$. 
Thus we ignore factor of powers of $\v$ in the following computation, and denote $f\sim \v^if$ for $i\in\mathbb{Z}$. 
$$\aligned
&E^*_{w,0,\dots,0}
\sim (\prod_{i=n,\dots,k+1} \sum_{v_i} \v^{w'_iv_i}{w_i\brack v_i}X(t')^{(w'_i-w_i)e_i + v_ib'_i}) \cdot (\prod_{i=k,\dots,1} X(t')^{w'_ie_i} \ X_i^{w_i})\\
&\sim \sum_{v_n,\dots,v_{k+1}} \v^{p_1}(\prod_{i=n,\dots,k+1} {w_i\brack v_i}) X(t')^{ \sum_{i=k+1}^n \big( (w'_i-w_i)e_i+v_ib'_i\big)+ \sum_{i=1}^k w'_ie_i} \ 
\prod_{i=k,\dots,1} X_i^{w_i}\\
&\sim \sum_{v_n,\dots,v_{k+1}} \v^{p_1}(\prod_{i=n,\dots,k+1}  {w_i\brack v_i}) X(t')^{ \sum_{i\in I_1} (w'_i+\sum_{j\in I_0} b_{ji}v_j)e_i +\sum_{i\in I_0}(w_i'-w_i)e_i +\sum_{i\in I_0}v_ie_{n+i} } 
\ 
\prod_{i=k,\dots,1} X_i^{w_i}\\
&\sim \sum_{v} \v^{p_1}( \prod_{\alpha\in I_0}  {w_\alpha\brack v_\alpha} 
\prod_{\beta \in I_1} { w'_\beta+\sum_{\alpha \in I_0} b_{\alpha\beta}v_\alpha \brack v_\beta} ) \\
&\quad \cdot X^{ \sum_{\beta \in I_1} ((-w'_\beta-\sum_{\alpha\in I_0}  v_\alpha b_{\beta\alpha})e_\beta +v_\beta b_\beta ) +\sum_{\alpha \in I_0}(w_\alpha'-w_\alpha)e_\alpha +\sum_{\alpha\in I_0}v_\alpha e_{n+\alpha} } 
\ 
\prod_{\beta=k,\dots,1} X_\beta^{w_\beta} 
\textrm{ (Lemma \ref{lem:Xt'u}) }\\
&\sim \sum_{v} \v^{p_1}( \prod_{\alpha\in I_0}  {w_\alpha\brack v_\alpha} 
\prod_{\beta \in I_1} { w'_\beta+\sum_{\alpha \in I_0} b_{\alpha\beta}v_\alpha \brack v_\beta} ) \\
&\quad \cdot X^{ \sum_{\beta \in I_1} (-w'_\beta e_\beta ) +\sum_{\alpha \in I_0}(w_\alpha'-w_\alpha)e_\alpha +\sum_{1\le i\le n}v_i b_i  } 
\ 
\prod_{\beta=k,\dots,1} X_\beta^{w_\beta}  
\textrm{ (used $v_\alpha e_{n+\alpha}-\sum_{\beta\in I_1} v_\alpha b_{\beta\alpha}e_\beta  = v_\alpha b_\alpha$) } \\
&\sim \sum_{v} \v^{p_2} 
(\prod_{\alpha\in I_0}  {w_\alpha\brack v_\alpha} 
\prod_{\beta \in I_1} { w'_\beta+\sum_{\alpha \in I_0} b_{\alpha\beta}v_\alpha \brack v_\beta} ) 
 X^{ \sum_{\beta \in I_1} (w_\beta-w'_\beta) e_\beta +\sum_{\alpha \in I_0}(w_\alpha'-w_\alpha)e_\alpha +\sum_{1\le i\le n}v_i b_i  }\\
\endaligned
$$ 
where 
$p_1=\sum_{\alpha\in I_0} w'_\alpha v_\alpha$, 
$p_2=\sum_{\alpha\in I_0} w'_\alpha v_\alpha+\sum_{\beta\in I_1} w_\beta v_\beta=\tilde{d}_{v,w}-d_{v,w}$. 
Since the last expression is already normalized, it gives a correct formula for $E_{w,0,\dots,0}^*$. This completes the proof.  
\end{proof}

\begin{lemma}\label{M'' in M'}
Let $w\in\mathbb{Z}_{\ge0}^{2n}$, $a=(w'_1-w_1,\dots,w'_n-w_n,a_{n+1},\dots,a_{2n})\in\mathbb{Z}^{2n}$. 

{\rm(a)} If $\min(w_\alpha,w'_\alpha)>0$ for some $\alpha\in I_0$, then
$$E^*_{w,a_{n+1},\dots,a_{2n}} = E^*_{w^{(1)},a_{n+1},\dots,a_{2n}}+ \v^p E^*_{w^{(2)},a'_{n+1},\dots,a_{n+\alpha}+1,\dots,a_{2n}}$$ 
where $p$ is a positive integer, $w^{(1)}$ is obtained from $w$ by $w_\alpha\mapsto w_\alpha-1$ and $w'_\alpha\mapsto w'_\alpha-1$; 
$w^{(2)}$ is obtained from $w$ by $w_\alpha\mapsto w_\alpha-1$,  $w'_\alpha\mapsto w'_\alpha-1$, $w'_\beta \mapsto w'_\beta+b'_{\beta \alpha}$. 

{\rm(b)} If $\min(w_\beta,w'_\beta)>0$ for some $\beta\in I_0$, then
$$E^*_{w,a_{n+1},\dots,a_{2n}} = E^*_{w^{(1)},a_{n+1},\dots,a_{2n}}+ \v^p E^*_{w^{(2)},a'_{n+1},\dots,a_{n+\beta}+1,\dots,a_{2n}}$$ 
where $p$ is a positive integer, $w^{(1)}$ is obtained from $w$ by $w_\beta\mapsto w_\beta-1$ and $w'_\beta\mapsto w'_\beta-1$; 
$w^{(2)}$ is obtained from $w$ by $w_\beta\mapsto w_\beta-1$,  $w'_\beta\mapsto w'_\beta-1$, $w'_\alpha \mapsto w'_\alpha-b'_{\alpha\beta}$. 

{\rm(c)} We have 
$$E^*_{w,a_{n+1},\dots,a_{2n}}\in E_a(t')+ \sum_{r(b)\le r(a)} \v\mathbb{Z}[\v]E_b(t')$$ 

\end{lemma}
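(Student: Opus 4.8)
The plan is to establish (a) and (b) by a direct computation in the quantum torus at the seed $t'=\mu_{I_1}(t_0)$, and then to deduce (c) by a short induction on $\sum_{i=1}^{n}w_i$.

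\textbf{Parts (a) and (b).} For (a), fix $\alpha\in I_0$ with $\min(w_\alpha,w'_\alpha)\ge 1$. In the product defining $E^*_{w,a_{n+1},\dots,a_{2n}}$ the only factor involving the index $\alpha$ is $X'_\alpha(t')^{w_\alpha}X_\alpha(t')^{w'_\alpha}$, with the primed variable on the left. I would rewrite it as $X'_\alpha(t')^{w_\alpha-1}\big(X'_\alpha(t')X_\alpha(t')\big)X_\alpha(t')^{w'_\alpha-1}$ and use that, since $b'_\alpha\in\mathbb{Z}_{\ge0}^{2n}$ for a sink $\alpha$, one has $X'_\alpha(t')X_\alpha(t')=\v^{c}X(t')^{b'_\alpha}+1$ for the explicit integer $c=\Lambda'(b'_\alpha,e_\alpha)$. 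Distributing, the ``$1$'' summand yields $E^*_{w^{(1)},a_{n+1},\dots,a_{2n}}$ (after checking that the normalizing powers of $\v$ agree), while in the ``$\v^{c}X(t')^{b'_\alpha}$'' summand the monomial $X(t')^{b'_\alpha}=\v^{?}\big(\prod_{\beta}X_\beta(t')^{b'_{\beta\alpha}}\big)X_{n+\alpha}$ is quasi-commuted into place: its $X_\beta(t')$ factors merge into the index-$\beta$ factors $X_\beta(t')^{w'_\beta}$, raising $w'_\beta$ to $w'_\beta+b'_{\beta\alpha}$, and $X_{n+\alpha}$ raises the exponent $a_{n+\alpha}$ by $1$, producing $\v^{p}E^*_{w^{(2)},a_{n+1},\dots,a_{n+\alpha}+1,\dots,a_{2n}}$ for an integer $p$ that turns out to be positive (positivity being forced by $w_\alpha,w'_\alpha\ge 1$). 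Part (b) is parallel for a source $\beta\in I_1$, where now the index-$\beta$ factor $X_\beta(t')^{w'_\beta}X'_\beta(t')^{w_\beta}$ has the primed variable on the right, $b'_\beta\le 0$, and one uses $X_\beta(t')X'_\beta(t')=1+\v^{c'}X(t')^{-b'_\beta}$ with $X(t')^{-b'_\beta}=\v^{?}\big(\prod_{\alpha}X_\alpha(t')^{-b'_{\alpha\beta}}\big)X_{n+\beta}$, which raises $w'_\alpha$ to $w'_\alpha-b'_{\alpha\beta}$ and $a_{n+\beta}$ by $1$. (Alternatively, (a) and (b) can be read off by substituting into them the closed formula for $E^*_{w,a_{n+1},\dots,a_{2n}}$ obtained from Lemma \ref{chiMbar=M''} by left-multiplying by the frozen monomial $X^{\sum_{i>n}a_ie_i}$; the identities then reduce to the $\v$-analogue of Pascal's rule ${m \brack r}=\v^{-r}{m-1 \brack r}+\v^{\,m-r}{m-1 \brack r-1}$ applied to the index-$\alpha$, resp.\ index-$\beta$, binomial factor, all other factors matching identically.)

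\textbf{Part (c).} I would induct on $N(w):=\sum_{i=1}^{n}w_i$. If $w={}^{\phi}w$ --- in particular whenever $N(w)=0$ --- then $a=\big((w'_i-w_i)_{i\le n},a_{n+1},\dots,a_{2n}\big)$ and \eqref{M'=M'} gives $E^*_{w,a_{n+1},\dots,a_{2n}}=E_a(t')$, so the claim holds with empty error term. Otherwise $\min(w_i,w'_i)\ge1$ for some $i$; applying (a) if $i\in I_0$ and (b) if $i\in I_1$ yields $E^*_{w,a_{n+1},\dots,a_{2n}}=E^*_{w^{(1)},\dots}+\v^{p}E^*_{w^{(2)},\dots}$ with $p\ge1$. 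Since passing from $w$ to $w^{(1)}$ or to $w^{(2)}$ decreases exactly one unprimed coordinate $w_i$ by $1$ and otherwise alters only primed coordinates, $N(w^{(1)})=N(w^{(2)})=N(w)-1$, so the inductive hypothesis applies to both. Inspecting the entries of $\tilde B'=\begin{bmatrix}-B\\ J\end{bmatrix}$ one checks: the ``$g$-vector part'' of the index attached to $E^*_{w^{(1)},\dots}$ equals $a$ (decreasing $w_i$ and $w'_i$ together leaves $w'_i-w_i$ fixed), so by induction $E^*_{w^{(1)},\dots}\in E_a(t')+\sum_{r(b)\le r(a)}\v\mathbb{Z}[\v]E_b(t')$; and the index $a^{(2)}$ attached to $E^*_{w^{(2)},\dots}$ is obtained from $a$ by adding the nonnegative quantities $b'_{\beta\alpha}\ge0$ (case (a)) or $-b'_{\alpha\beta}\ge0$ (case (b)) to some of the first $n$ coordinates and changing one frozen coordinate, whence $r(a^{(2)})\le r(a)$ and, by induction together with $p\ge1$, $\v^{p}E^*_{w^{(2)},\dots}\in\sum_{r(b)\le r(a)}\v\mathbb{Z}[\v]E_b(t')$. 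Adding the two memberships proves (c).

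\textbf{Main obstacle.} The step I expect to be hardest is the bookkeeping of powers of $\v$ in (a) and (b): one must carefully quasi-commute the monomials $X(t')^{b'_\alpha}$, resp.\ $X(t')^{-b'_\beta}$, past the remaining factors of $E^*$ using the form $\Lambda'$, and then compare the accumulated exponent with the normalizing exponents $v'$ of $E^*_{w,\dots}$, $E^*_{w^{(1)},\dots}$ and $E^*_{w^{(2)},\dots}$ in order to confirm that the first summand comes out with coefficient exactly $1$ and that the residual power $p$ on the second summand is strictly positive. Once the signs of the entries of $\tilde B'$ are recorded, the remaining points --- identifying the two summands as standard-monomial-type expressions, the inequality $r(a^{(2)})\le r(a)$, and termination of the induction --- are routine.
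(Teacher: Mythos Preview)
Your approach is essentially the same as the paper's: for (a) and (b) the paper factors the index-$\alpha$ (resp.\ index-$\beta$) block as $X'_\alpha(t')^{w_\alpha-1}\big(X'_\alpha(t')X_\alpha(t')\big)X_\alpha(t')^{w'_\alpha-1}$, replaces the middle by $1+\v X(t')^{b'_\alpha}$ (working up to powers of $\v$), distributes, and then computes the residual exponent explicitly as $p=w_\alpha+w'_\alpha-1+\sum_{\beta\in I_1}w_\beta(-b'_{\alpha\beta})\ge1$; your identification of this $\Lambda'$-bookkeeping as the main obstacle is exactly right. For (c) the paper simply says ``apply (a) and (b) recursively'', which your induction on $\sum_i w_i$ together with the observation $r(a^{(2)})\le r(a)$ makes precise.
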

\begin{proof}
(a) If $\min(w_\alpha,w'_\alpha)>0$ for $\alpha\in I_0$, then 
$$\aligned
&E^*_{w,a_{n+1},\dots,a_{2n}}
\sim
X^{\sum_{i=n+1}^{2n}a_ie_i}
\prod_{i=n,\dots,\alpha+1} 
\Big(
X_i'(t')^{w_i}  X_i(t')^{w'_i} 
\Big)
\cdot
X_\alpha'(t')^{w_\alpha-1} 
\cdot 
\big(X_\alpha'(t') X_\alpha(t')\big)
\\
&\quad\quad\quad\quad\quad\quad 
\cdot
X_\alpha(t')^{w'_\alpha-1} 
\cdot
\prod_{i=\alpha-1,\dots,k+1} 
\Big(
X_i'(t')^{w_i}  X_i(t')^{w'_i} 
\Big)
\prod_{i=k,\dots,1} 
\Big(
X_i(t')^{w'_i}X_i'(t')^{w_i} 
\Big)
\\
&\sim
X^{\sum_{i=n+1}^{2n}a_ie_i}
\prod_{i=n,\dots,\alpha+1} 
\Big(
X_i'(t')^{w_i}  X_i(t')^{w'_i} 
\Big)
\cdot
X_\alpha'(t')^{w_\alpha-1} 
\cdot 
\big(1+ \v X(t')^{b'_\alpha}\big)
\\
&\quad\quad 
\cdot
X_\alpha(t')^{w'_\alpha-1} 
\cdot
\prod_{i=\alpha-1,\dots,k+1} 
\Big(
X_i'(t')^{w_i}  X_i(t')^{w'_i} 
\Big)
\prod_{i=k,\dots,1} 
\Big(
X_i(t')^{w'_i}X_i'(t')^{w_i} 
\Big)
\\
&\sim \v^{p_1} E^*_{w^{(1)},a_{n+1},\dots,a_{2n}}+\v^{p_2} E^*_{w^{(2)},a_{n+1},\dots,a_{n+\alpha}+1,\dots,a_{2n}}
\\
\endaligned
$$
where 
$w^{(1)}$ is obtained from $w$ by $w_\alpha\mapsto w_\alpha-1$ and $w'_\alpha\mapsto w'_\alpha-1$; 
$w^{(2)}$ is obtained from $w$ by $w_\alpha\mapsto w_\alpha-1$,  $w'_\alpha\mapsto w'_\alpha-1$, $w'_\beta \mapsto w'_\beta+b'_{\beta \alpha}$. 
Note that $(w^{(2)},a_{n+1},\dots,a_{n+\alpha}+1,\dots,a_{2n})$ corresponds to $b=(a_1+b'_{1\alpha},\dots,a_k+b'_{k\alpha},a_{k+1},\dots,a_n,a_{n+1},\dots,a_{n+\alpha}+1,\dots,a_{2n})$, 
which satisfies $b\prec a$ or, unpleasantly, $r(b)=r(a)$. If the latter case happens, at least we see that $\sum_i w_i$ strictly decreases. 

Next, we show that $p>0$. Indeed, note that $e_\alpha'=-e_\alpha$ for $\alpha\in I_0$, $e_\beta'=-e_\beta+(-b_\beta')$ for $\beta\in I_0$, and 
$p_1=\Lambda'\big(\sum_{n+1}^{2n}a_ie_i,w_ne'_n,w_n'e_n,\dots,(w_\alpha-1)e'_\alpha$, $(w'_\alpha-1)e_\alpha,\dots$, $w_{k+1}e'_{k+1},w'_{k+1}e_{k+1},w'_ke_k,w_ke'_k,\dots,w'_1e_1,w_1e'_1\big)$, 
while $p_2=1+\Lambda'\big(\sum_{n+1}^{2n}a_ie_i$, $w_ne'_n$, $w_n'e_n$, $\dots$, $(w_\alpha-1)e'_\alpha,b'_\alpha,(w'_\alpha-1)e_\alpha,\dots,w_{k+1}e'_{k+1},w'_{k+1}e_{k+1},w'_ke_k,w_ke'_k,\dots,w'_1e_1,w_1e'_1\big)$.
It is routine to verify that 
$$p=p_2-p_1= w_\alpha+w'_\alpha-1+\sum_{\beta\in I_1} w_\beta(-b'_{\alpha\beta})\ge1$$
where the last inequality is because  $b'_{\alpha\beta}\le0$ for $\alpha\in I_0, \beta\in I_1$. 

(b) is proved similarly to (a). 

(c) is obtained by applying (a) and (b) recursively.
\end{proof}

\subsection{$\{\chi(L(w))\}$ and the triangular basis}

\begin{lemma}\label{chiL(w)=C}
For any $w\in\mathbb{Z}_{\ge0}^{2n}$, ${\chi(L(w))}=C_{\Phi(w)}$ is a triangular basis element. 
In particular, $\chi(L({}^\phi w))=\chi(L(w))$ (therefore $a_{v,0;w}=a_{v,0;{}^\phi w }$ for any $v\in\mathbb{Z}_{\ge0}^n$) and $\chi(L({}^f w))=1$. 
\end{lemma}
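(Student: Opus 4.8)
The plan is to identify $\chi(L(w))$ with the triangular basis element $C_{\Phi(w)}$ by verifying the two defining properties (P1) bar-invariance and (P2) unitriangularity with respect to the standard monomial basis, exploiting the machinery of $\S5$ already assembled. First I would recall from \eqref{L=sum M} the unitriangular change of basis $L(w)=M(w)+\sum_{w''<_{\mathrm w}w}t^{-1}\mathbb{Z}[t^{-1}]M(w'')$ in $\mathbf{R}_t^{\mathrm{finite}}$, and apply $\chi$; since $\chi$ is $\mathbb{Z}[t^\pm]$-linear with $\chi(tL)=\v^\delta\chi(L)$, this gives $\chi(L(w))\in \chi(M(w))+\sum_{w''<_{\mathrm w}w}\v^{-\delta}\mathbb{Z}[\v^{-\delta}]\chi(M(w''))$. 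Applying the bar-involution and using Lemma \ref{chiMbar=M''}, which says $\overline{\chi(M(w))}=E^*_{w,0,\dots,0}$, together with Lemma \ref{M'' in M'}(c), which expands $E^*_{w,0,\dots,0}$ unitriangularly in the standard monomials $E_b(t')$, I would conclude that $\overline{\chi(L(w))}$ lies in $E_a(t')+\sum_{r(b)\le r(a)}\v\mathbb{Z}[\v]E_b(t')$ where $a=(w'_i-w_i)_i$ padded by zeros. Taking bar again and noting that the standard monomials themselves need not be bar-invariant, I will need to combine the two triangular relations to land $\chi(L(w))$ in $E_a(t')+\bigoplus_{a'\prec a}\v\mathbb{Z}[\v]E_{a'}(t')$; this is (P2).

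For (P1), bar-invariance of $\chi(L(w))$, I would use that $L(w)$ is bar-invariant in $\mathbf{R}_t$ — indeed it is the image of $L_w(0)$ under the natural map, and $\overline{IC_w(v)}=IC_w(v)$ forces $\overline{L(w)}=L(w)$ — and then argue that $\chi$ intertwines the bar-involution on $\mathbf{R}_t^{\mathrm{finite}}$ with the bar-involution on $\mathcal{T}$. The latter compatibility is essentially the content of \eqref{chiLw}: the coefficients $a_{v,0;w}(\v)$ of $\chi(L(w))$ are Poincaré polynomials of fibers $H^k(i_0^!\pi_w(v))$ of a pushforward of an $IC$ sheaf under a semismall-type (in fact, here, small for $l$-dominant pairs) proper map, hence palindromic, which makes $\chi(L(w))$ bar-invariant after the appropriate normalization of $\Phi(w)$. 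Here I would want to check carefully that $\Lambda$ interacts with $\Phi(w)+\tilde{B}v$ so that the monomials $X^{\Phi(w)+\tilde Bv}$ are, up to the correct power of $\v$, bar-invariant — this uses the principal quantization choice and the skew-symmetry of $B$.

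Once (P1) and (P2) are verified, uniqueness of the triangular basis (stated after the definition in $\S\ref{section:review qCA}$) forces $\chi(L(w))=C_{\Phi(w)}$, since the $g$-vector/degree data match via the leading term $X^{\Phi(w)}$ coming from the $v=0$ summand. For the "in particular" clauses: $L({}^\phi w)=L(w)$ is immediate from the construction of $L(w)$ via $L_w(0)$ and \eqref{eq:a=a}, because ${}^fw$ only shifts $w$ by something that does not affect $\mathbf{E}_{0,w}$, more precisely $a_{v,0;w}$ depends only on ${}^\phi w$ by \eqref{eq:a=a} applied iteratively (peeling off the free part of $w$ as a $v^0=0$ slice is vacuous, so one instead uses that the decomposition numbers $a_{v,0;w}$ are insensitive to adding to $w$ a vector of the form $C_q u$ in the frozen directions); combined with $\Phi({}^\phi w)=\Phi(w)$ — which I should double-check from \eqref{eq:phi}, since ${}^fw$ contributes $(w_i-w'_i)$ equally to the $I_1$ and $I_0$ pieces — this yields $\chi(L({}^\phi w))=\chi(L(w))$ and hence the claim on $a_{v,0;w}$. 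Finally $\chi(L({}^fw))=1$ should follow because for $w={}^fw$ one has ${}^\phi w=0$, so $\D(w)=\{0\}$ and the only surviving term is $v=0$ with coefficient $a_{0,0;w}=1$ and exponent $\Phi({}^fw)=0$.

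The main obstacle I anticipate is the bookkeeping in (P2): Lemma \ref{M'' in M'}(c) only guarantees $E^*_{w,0,\dots,0}\in E_a(t')+\sum_{r(b)\le r(a)}\v\mathbb{Z}[\v]E_b(t')$ with the genuinely annoying possibility $r(b)=r(a)$ (equality rather than strict inequality), and one must upgrade this to the strict order $\prec$ required by the triangular basis axiom. Resolving this presumably requires invoking the secondary filtration by $\sum_i w_i$ noted in the proof of Lemma \ref{M'' in M'}, showing that among terms with $r(b)=r(a)$ the process strictly decreases $\sum w_i$ and thus terminates, and then checking that the resulting finite sum, after passing from the seed $t'$ back to $t_0$ and re-barring, still respects $\prec$; translating the partial order between the two seeds $t_0$ and $t'=\mu_{I_1}(t_0)$ is the delicate point. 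A secondary difficulty is making the bar-involution compatibility in (P1) fully rigorous at the level of the normalizing powers of $\v$, i.e. confirming that $\chi$ as defined in \eqref{def_chiMw} genuinely commutes with bar on all of $\mathbf{R}_t^{\mathrm{finite}}$ and not merely on each $M(w)$ up to an uncontrolled scalar.
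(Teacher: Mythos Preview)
Your approach is essentially the paper's, and the chain of lemmas you invoke (\eqref{L=sum M}, Lemma~\ref{chiMbar=M''}, Lemma~\ref{M'' in M'}(c), then uniqueness) is exactly what the paper uses. The two ``obstacles'' you flag, however, are not genuine difficulties.

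First, the worry about $r(b)=r(a)$ in Lemma~\ref{M'' in M'}(c) is misplaced. Re-read the statement of (P2): it only asks that $C_a-E_a\in\bigoplus_{a'}\v\ZZ[\v]E_{a'}$, with \emph{no} constraint on $a'$; the clause ``Moreover, those $a'$ satisfy $a'\prec a$'' is recorded as a consequence, not as part of the characterizing property. So once you land $\chi(L(w))-E_{\Phi(w)}(t')$ in $\sum_b\v\ZZ[\v]E_b(t')$ you are done with (P2), and no secondary filtration or upgrade to strict order is needed. Second, you do not need to translate partial orders between seeds. The paper's argument is: having verified (P1) and (P2) relative to the standard monomials $E_b(t')$ at the seed $t'=\mu_{I_1}(t_0)$, invoke \cite[Theorem~1.1]{BZ2} to conclude $\chi(L(w))$ is a triangular basis element at $t'$, and then cite the seed-independence of the triangular basis (also from \cite{BZ2}) to transport this to $t_0$. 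That finishes the proof with no order-comparison at all.

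For (P1), the paper's argument is shorter than yours: \eqref{chiLw} expresses $\chi(L(w))$ as $\sum_v a_{v,0;w}(\v)X^{\Phi(w)+\tilde Bv}$; the monomials $X^e$ are bar-invariant by definition, and each $a_{v,0;w}$ is palindromic because Verdier self-duality of \eqref{pi=sum aIC} forces $a^d_{v,v';w}=a^{-d}_{v,v';w}$. There is no need to argue that $\chi$ globally intertwines bar. Finally, the ``in particular'' clauses are immediate from $\chi(L(w))=C_{\Phi(w)}$ alone, since $\Phi$ depends only on the differences $w'_i-w_i$: hence $\Phi({}^\phi w)=\Phi(w)$ and $\Phi({}^fw)=0$, giving $\chi(L({}^\phi w))=\chi(L(w))$ and $\chi(L({}^fw))=C_0=1$; the detour through \eqref{eq:a=a} is unnecessary.
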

\begin{proof}
It follows from \eqref{chiLw} that $\chi(L(w))$ is bar-invariant. 
Meanwhile,  
$$
\aligned
&\chi(L(w))-E_{\Phi(w)}(t')\in \chi(L(w))-E^*_{w,0,\dots,0}+ \sum \v\mathbb{Z}[\v]E_b(t') \quad\text{ (by Lemma  \ref{M'' in M'}) }\\
&\quad =\chi(L(w))-\overline{\chi(M(w))}+ \sum \v\mathbb{Z}[\v]E_b(t')\quad\text{ (by Lemma  \ref{chiMbar=M''}) }\\
&\quad \subseteq \overline{\chi(M(w))}+\sum\v\mathbb{Z}[\v]\overline{\chi(M(w''))}-\overline{\chi(M(w))}+ \sum \v\mathbb{Z}[\v]E_b(t')\quad\text{ (by \eqref{L=sum M})}\\
&\quad = \sum\v\mathbb{Z}[\v] E^*_{w'',0,\dots,0}+\sum \v\mathbb{Z}[\v]E_b(t') \quad\text{ (by  Lemma \ref{chiMbar=M''})}\\
&\quad \subseteq \sum\v\mathbb{Z}[\v] (\sum \mathbb{Z}[\v] E_{b'} (t') +\sum \v\mathbb{Z}[\v]E_b(t')\\
&\quad=\sum \v\mathbb{Z}[\v]E_b(t')
\endaligned
$$ 
By \cite[Theorem 1.1]{BZ2}, $\chi(L(w))$ equals to a triangular basis element for the seed at $t'$. Since the triangular basis does not depend on the chosen acyclic seed (also proved in \cite{BZ2}), $\chi(L(w))$ is also a triangular basis element for initial seed at $t_0$.  
\end{proof}

\section{The proof of the main result}
We cite the following lemma in \cite{L}.

\begin{lemma}\label{BBDG property}
Let $f: Y\to X$ be a proper morphism between complex algebraic varieties, $Y$ be nonsingular,  let $0$ be a point in $X$. 
Let $d=\dim Y$, $Y_0=f^{-1}(0)$ and $d_0=\dim Y_0$. Write the BBDG decomposition in the form
\begin{equation}\label{decomposition 0}f_*IC_Y=\bigoplus_b IC_{0}^{\oplus s_{0,b}}[b]\oplus\bigoplus_{V,L,b} IC_{V}^{\oplus s_{V,L,b}}(L)[b]
\end{equation}
where $V\neq 0$ are subvarieties of $X$,  each $L$ is a local system on an open dense subset of $V$, $s_{0,b}, s_{V,L,b}\in\mathbb{Z}_{\ge0}$ are multiplicities of the corresponding $IC$-sheaves. Then  $\{s_{0,b}\}$ satisfy the following conditions:

{\rm i)}  $s_{0,b}=s_{0,-b}$ for every $b\in\mathbb{Z}$.

{\rm ii)} $s_{0,b}\ge s_{0,b+2}$ for  every $b\in\mathbb{Z}_{\ge 0}$.

{\rm iii)} $s_{0,b}=0$ if $|b|>2d_0-d$. In particular, if $2d_0<d$, then $s_{0,b}=0$ for all $b$.
\end{lemma}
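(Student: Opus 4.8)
The plan is to read off all three assertions from the Decomposition Theorem \ref{BBDthm} applied to $f$, using three inputs: (a) the decomposition $f_*IC_Y\cong\bigoplus_{j}{}^{p}\!H^{j}(f_*IC_Y)[-j]$, where each perverse cohomology sheaf $P^{j}:={}^{p}\!H^{j}(f_*IC_Y)$ is semisimple; (b) self-duality $\mathbb{D}(f_*IC_Y)\cong f_*IC_Y$, which holds because $f$ is proper and, in the normalization $IC_Y=\mathbb{Q}_Y[d]$ (legitimate since $Y$ is smooth), $IC_Y$ is Verdier self-dual, so that $\mathbb{D}(P^{j})\cong P^{-j}$; and (c) relative Hard Lefschetz, $\eta^{j}\colon P^{-j}\xrightarrow{\ \sim\ }P^{j}$ for $j\ge0$, with $\eta$ the first Chern class of an $f$-relatively ample line bundle (so $\eta$ induces morphisms of perverse sheaves $P^{j}\to P^{j+2}$). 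Writing $i_0\colon\{0\}\hookrightarrow X$ and letting $n_0(P)$ be the multiplicity of the skyscraper $IC_{\{0\}}=\mathbb{Q}_{\{0\}}$ in a semisimple perverse sheaf $P$, the grouped decomposition \eqref{decomposition 0} is precisely the statement $s_{0,b}=n_0(P^{-b})$. First I would record two bookkeeping facts: that $n_0(P)=\dim\mathcal{H}^{0}(i_0^{*}P)$ for semisimple perverse $P$ — because $i_0^{*}IC_{\overline{V}}(L)=0$ when $0\notin\overline V$ and, by the strict support conditions characterizing intersection complexes, $\mathcal{H}^{0}(i_0^{*}IC_{\overline V}(L))=0$ whenever $0\in\overline V$ with $\dim V\ge1$, while $i_0^{*}IC_{\{0\}}=\mathbb{Q}$ sits in degree $0$ — and that $\mathcal{H}^{0}(i_0^{*}(-))$ commutes with finite direct sums, with $n_0(\mathbb{D}P)=n_0(P)$ since $\mathbb{D}IC_{\{0\}}=IC_{\{0\}}$.

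For (i) I would simply invoke (b): $s_{0,b}=n_0(P^{-b})=n_0(\mathbb{D}P^{-b})=n_0(P^{b})=s_{0,-b}$. For (ii) the input is (c): given $b\ge0$, put $i=-b-2\le-1$; then $-i\ge 1$, and the isomorphism $\eta^{-i}\colon P^{i}\xrightarrow{\ \sim\ }P^{-i}$ factors as $\eta^{-i-1}\circ\eta$, so $\eta\colon P^{i}\to P^{i+2}$ is injective. Since $P^{i+2}$ is semisimple this monomorphism splits, $P^{i+2}\cong P^{i}\oplus\operatorname{coker}(\eta)$, hence $n_0(P^{i})\le n_0(P^{i+2})$, i.e.\ $s_{0,b+2}\le s_{0,b}$.

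For (iii) I would use proper base change along $f$ — observing that $Y_0=f^{-1}(0)$ is proper since $f$ is — to identify $i_0^{*}f_*IC_Y\cong R\Gamma(Y_0,\mathbb{Q})[d]$, so that $\mathcal{H}^{k}(i_0^{*}f_*IC_Y)\cong H^{k+d}(Y_0,\mathbb{Q})$, which vanishes unless $0\le k+d\le2d_0$. Applying $i_0^{*}$ to the decomposition (a) and taking $\mathcal{H}^{k}$, the term $\mathcal{H}^{0}(i_0^{*}P^{k})=\mathbb{Q}^{\,n_0(P^{k})}$ occurs as a direct summand of $\mathcal{H}^{k}(i_0^{*}f_*IC_Y)$, so $n_0(P^{k})\le\dim H^{k+d}(Y_0,\mathbb{Q})$ and hence $n_0(P^{k})=0$ for $k>2d_0-d$; equivalently $s_{0,b}=n_0(P^{-b})=0$ for $b<-(2d_0-d)$, and combining with the symmetry (i) gives $s_{0,b}=0$ for all $b$ with $|b|>2d_0-d$. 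The "in particular" is then immediate, since $2d_0<d$ forces $|b|\ge0>2d_0-d$ for every $b$.

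I expect the main obstacle to be the use of relative Hard Lefschetz in (ii): in its usual form it is available for \emph{projective} morphisms, whereas the lemma is phrased for proper $f$. In every application in this paper $f$ is the projective morphism $\pi$ (being the restriction of a projection along a product of Grassmannians), so one may either restate the lemma for projective $f$ or appeal to the proper version of de~Cataldo--Migliorini. The only other place requiring care — and it is entirely standard — is the identification $n_0(P)=\dim\mathcal{H}^{0}(i_0^{*}P)$, which relies on the strict support conditions for intersection cohomology complexes; everything else is bookkeeping with the perverse cohomology of $f_*IC_Y$.
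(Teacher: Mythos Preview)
The paper does not give its own proof of this lemma; it merely cites it from \cite{L}. Your argument is correct and is exactly the standard one: Verdier self-duality of $f_*IC_Y$ gives (i), relative Hard Lefschetz gives (ii) via the splitting of $\eta\colon P^{i}\hookrightarrow P^{i+2}$ in the semisimple category of perverse sheaves, and proper base change together with the vanishing of $H^{>2d_0}(Y_0,\mathbb{Q})$ gives (iii). Your caveat about Hard Lefschetz being stated for projective morphisms is well taken and, as you note, harmless in the paper's applications since $\pi$ is projective.
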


\begin{proof}[Proof of Theorem \ref{main theorem}]
For the given $a=(a_i)\in\mathbb{Z}^{2n}$, define $w=(w_i,w'_i)_{1\le i\le n}\in\mathbb{Z}_{\ge0}^{2n}$ by $w'_i=[a_i]_+, w_i=[-a_i]_+$. Then $\Phi(w)=(a_1,\dots,a_n,0,\dots,0)$. 
By \eqref{chiLw}, $e_v= a_{v,0;w}$. 
Apply Lemma \ref{BBDG property} to the projection $\pi: \tilde{\mathcal{F}}_{v,w}\to \mathbf{E}_{v,w}$ defined in \eqref{pi}, we conclude that $e_v$ is symmetric, unimodal, and
$$\deg(e_v)=\deg a_{v,0;w}\le 2d_{v,w}-\tilde{d}_{v,w}=f(v)$$
by Proposition \ref{prop:fiber} (b).  
\end{proof}
\medskip

We end the paper with two examples. 

\begin{example}
(a) 
Consider the initial seed determined by $B=\begin{bmatrix}0&-3\\3&0\end{bmatrix}$. 

The corresponding quiver $\mathcal{Q}^{\rm op}$ is $1\triplerightarrow{} 2$. 
\medskip

We compute the triangle basis element at $a=(9,-4,0,0)$. The corresponding $w=(w_1,w'_1,w_2,w'_2)=(0,9,4,0)$. Then
\medskip

$E_{(9,-4,0,0)}=X^{(-3, -4, 0, 4)} + (\v^2+1+\v^{-2})X^{(-3, -1, 1, 4)} + (\v^2+1+\v^{-2})X^{(-3, 2, 2, 4)} + X^{(-3, 5, 3, 4)} + (\v^3+\v+\v^{-1}+\v^{-3})X^{(0, -4, 0, 3)} + (\v^2+1+\v^{-2})X^{(0, -1, 1, 3)} + (\v^4+\v^2+2+\v^{-2}+\v^{-4})X^{(3, -4, 0, 2)} + X^{(3, -1, 1, 2)} + (\v^3+\v+\v^{-1}+\v^{-3})X^{(6, -4, 0, 1)} + X^{(9, -4, 0, 0)}$, 
\medskip

$f(v)=-(9+v_1)v_1-(-4+v_2)v_2+ 3v_1v_2=-v_1^2+3v_1v_2-v_2^2-9v_1+4v_2$. 
\medskip

${\rm Supp}(C_a)=\{(0,0),((0,1),(0,2),(0,3),(0,4),(1,2),(1,3),(1,4),(2,4),(3,4)\}$. 

\begin{figure}[h]
\centering
\begin{minipage}{.45\textwidth}
\vspace{1.5cm}

\begin{center}
\includegraphics[width=5cm]{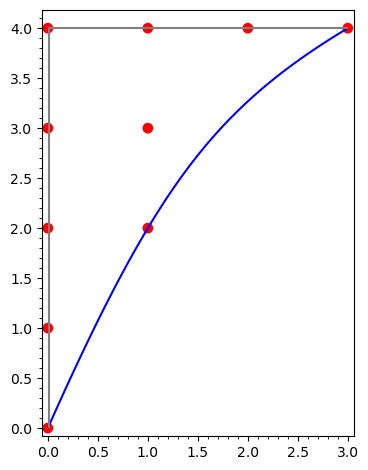}
\end{center}
\end{minipage}
\begin{minipage}{.5\textwidth}
\includegraphics[width=9cm,clip=true, trim = 100mm 0mm 30mm 0mm]{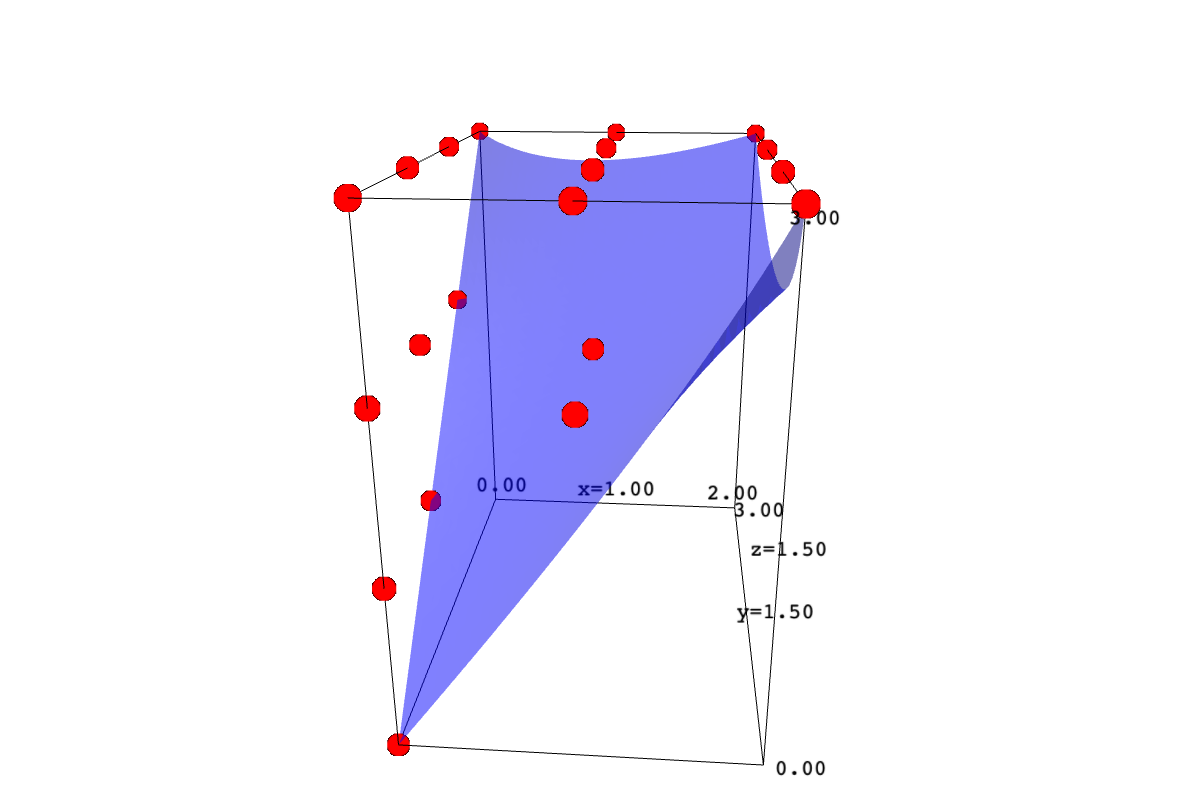}
\end{minipage}
\caption{Left: the support of $C_{(9,-4,0,0)}$ in (a). Right: the support of $C_{(4,3,-3,0,0,0)}$ in (b). The points in ${\rm Supp}(C_a)$ are in red color.  }
\label{fig:example}
\end{figure}
\medskip

See Figure \ref{fig:example} Left. The inequality $f(v)\ge0$ gives the upper left side of the blue curve.    This triangular basis element corresponds to $C[3,4]$ given in \cite[\S10.1(a)]{L}. Note that the $d$-vector $(3,4)$ corresponds to $g$-vector $(9,-4)$. 
\medskip

(b) 
Consider the initial seed determined by
$=\begin{bmatrix}0&0&-2\\ 0&0&-2\\ 2&2&0\end{bmatrix}$. 
\medskip

The corresponding quiver $\mathcal{Q}^{\rm op}$ is $1\doublerightarrow{}{}3\doubleleftarrow{}{}2$.
\medskip

We compute the triangle basis element at $a=(4,3,-3,0,0,0)$. The corresponding $w=(w_1,w'_1,w_2,w'_2,w_3,w'_3)=(0,4,0,3,3,0)$. Then
\medskip

$E_{(4,3,-3,0,0,0)}=X^{(-2, -3, -3, 0, 0, 3)} + (\v^2+1+\v^{-2})X^{(-2, -3, -1, 0, 1, 3)} + (\v+\v^{-1})*X^{(-2, -3, -1, 1, 0, 3)} + (\v^2+1+\v^{-2})X^{(-2, -3, 1, 0, 2, 3)} + (\v^3+2\v+2\v^{-1}+\v^{-3})X^{(-2, -3, 1, 1, 1, 3)} + X^{(-2, -3, 1, 2, 0, 3)} + X^{(-2, -3, 3, 0, 3, 3)} + (\v^3+2\v+2\v^{-1}+\v^{-3})X^{(-2, -3, 3, 1, 2, 3)} + (\v^2+1+\v^{-2})X^{(-2, -3, 3, 2, 1, 3)} + (\v+\v^{-1})X^{(-2, -3, 5, 1, 3, 3)} + (\v^2+1+\v^{-2})X^{(-2, -3, 5, 2, 2, 3)} + X^{(-2, -3, 7, 2, 3, 3)} + (\v^2+1+\v^{-2})X^{(0, -1, -3, 0, 0, 2)} + (\v^2+2+\v^{-2})X^{(0, -1, -1, 0, 1, 2)} + (\v+\v^{-1})X^{(0, -1, -1, 1, 0, 2)} + X^{(0, -1, 1, 0, 2, 2)} + (\v+\v^{-1})X^{(0, -1, 1, 1, 1, 2)} + (\v^2+1+\v^{-2})X^{(2, 1, -3, 0, 0, 1)} + X^{(2, 1, -1, 0, 1, 1)} + X^{(4, 3, -3, 0, 0, 0)}$, 
\medskip

$f(v)=-(4+v_1)v_1-(3+v_2)v_2-(-3+v_3)v_3+ 2v_1v_3+2v_2v_3=-v_1^2-v_2^2-v_3^2+2v_1v_3+2v_2v_3-4v_1-3v_2+3v_3$. 

\medskip

${\rm Supp}(C_a)=\{(0,0,0),  (0,0,1),  (0,0,2), (0,0,3), (0,1,1),(0,1,2), (0,1,3), (0,2,2), (0,2,3)$, $(0,3,3), (1,0,2), (1,0,3), (1,1,2),(1,1,3), (1,2,3), (1,3,3), (2,0,3), (2,1,3), (2,2,3), (2,3,3)\}$. 
\smallskip

See Figure \ref{fig:example} Right. The inequality $f(v)\ge0$ gives the upper region of the blue surface.    Note that the $d$-vector $(2,3,3)$ corresponds to $g$-vector $(4,3,-3)$. 
\medskip

\end{example}

\end{document}